\documentclass{amsart}
\usepackage{amsmath,amssymb,amsthm,bm,bbm}
\usepackage{graphicx,enumerate}
\usepackage{layout}
\usepackage[all,cmtip]{xy}
\usepackage{multicol}

\usepackage{hyperref}
\usepackage{url}

\theoremstyle{plain}
\newtheorem{theorem}{Theorem}[section]

\newtheorem{corollary}[theorem]{Corollary}
\newtheorem{conjecture}[theorem]{Conjecture}

\theoremstyle{definition}

\newtheorem{remark}[theorem]{Remark}

\newtheorem*{acknowledgment}{Acknowledgment}
\theoremstyle{remark}

\newcommand{\bZ}{\mathbbm{Z}}
\newcommand{\bQ}{\mathbbm{Q}}

\newcommand{\Cl}{\mathrm{Cl}}

\title{Simplest cubic fields with small class number}

\author[A. Hoshi]{Akinari Hoshi}
\address{Department of Mathematics, Niigata University, Niigata 950-2181, Japan}
\email{hoshi@math.sc.niigata-u.ac.jp}

\author[H. Iida]{Hiroaki Iida}
\address{Graduate School of Science and Technology, Niigata University, Niigata 950-2181, Japan}
\email{iida.hiroaki0616@gmail.com}

\subjclass[2020]{Primary 11D25, 11D59, 11R16, 11R29, 11Y40, 12F10.}
\keywords{Simplest cubic fields, class numbers, computational algebraic number theory, Thue equations, monogenic polynomial}
\thanks{This work was partially supported by JSPS KAKENHI Grant Numbers 
19K03418, 24K00519.}
\begin{document}

\begin{abstract}
Let $m\in\bZ$ be an integer and $L_m=\bQ(\alpha)$ be 
the simplest cubic field 
with class number $h_m$ and conductor $\mathfrak{f}_m$ 
where $\alpha$ is a root of $f_m(X)=X^3-mX^2-(m+3)X-1$. 
Let $\mathcal{O}_{L_m}$ be the ring of integers of $L_m$. 
By using PARI/GP, we determine that if $[\mathcal{O}_{L_m}:\bZ[\alpha]]=1$ 
$($resp. $3$, $27$$)$, 
i.e. $m^2+3m+9=\mathfrak{f}_m$ $($resp. $3\mathfrak{f}_m$, 
$27\mathfrak{f}_m$$)$, then there exist exactly 
$581$ (resp. $80$, $142$) integers $m\geq -1$ such that $h_m\leq 1000$. 
We also show that if $-1\leq m\leq 10^7$, 
then $h_m<16$ holds for $138=26+31+11+10+36+21+3$ integers $m$. 
More precisely, there exist $26$ $($resp. $31$, $11$, $10$, $36$, $21$, $3$$)$ 
integers $m$ with $-1\leq m\leq 10^7$ such that $h_m=1$ 
$($resp. $3$, $4$, $7$, $9$, $12$, $13$$)$ which are given explicitly. 
All computations of the class numbers and class groups of the listed fields 
are certified unconditionally. 
Under the GRH $($Generalized Riemann Hypothesis$)$, we confirm that 
these are the only such integers $m$ in the range $-1\leq m\leq 10^7$. 
We conjecture that the $26$ integers $m$ obtained above with $h_m=1$ 
are the only integers $m\geq -1$ with $h_m=1$. 
\end{abstract}
\maketitle
\tableofcontents

%%%%%%%%%%%%%%%%%%%%%%%%%%%%%%%%%%%%%%%%%%%%%%%%%%%%%%%%%%%%%%%%%%%%%%%%%%%%%%%
\section{Introduction}\label{S1}

Let $m\in\bZ$ be an integer and 
\begin{align*}
f_m(X)&=X^3-mX^2-(m+3)X-1\in\bZ[X] 
\end{align*}
%be the simplest cubic polynomial 
with discriminant 
\begin{align*}
{\rm disc}(f_m(X))=(m^2+3m+9)^2. 
\end{align*}
The polynomial $f_m(X)$ is irreducible over $\bQ$ for any $m\in\bZ$ 
and the splitting field $L_m=\bQ(\alpha)$ of $f_m(X)$ over $\bQ$ 
is called {\it the simplest cubic field} 
where $\alpha$ is a root of $f_m(X)$ with Galois action
\begin{align*}
\sigma: \alpha\mapsto \frac{-1}{\alpha+1}\mapsto 
-\frac{\alpha+1}{\alpha}\mapsto \alpha
\end{align*}
and ${\rm Gal}(L_m/\bQ)=\langle\sigma\rangle\simeq C_3$ 
(see Shanks \cite{Sha74}). 
A reduced form of $f_m(X)$ is given by 
\begin{align*}
3^3f_m((X+m)/3)=X^3-3(m^2+3m+9)X-(2m+3)(m^2+3m+9).
\end{align*}
Let $L_m=\bQ(\alpha)$ be the simplest cubic field 
with conductor $\mathfrak{f}_m$, 
i.e. the smallest integer $\mathfrak{f}_m\geq 1$ 
with $L_m\subset \bQ(\zeta_{\mathfrak{f}_m})$ 
where $\zeta_n$ is the primitive $n$-th root of unity, 
and $\mathcal{O}_{L_m}$ be the ring of integers of $L_m$. 
Then we have 
\begin{align*}
{\rm disc}(f_m(X))&=d_m\,[\mathcal{O}_{L_m}:\bZ[\alpha]]^2,\\
m^2+3m+9&=\mathfrak{f}_m\,[\mathcal{O}_{L_m}:\bZ[\alpha]]
\end{align*}
where $d_m=(\mathfrak{f}_m)^2$ 
is the discriminant of $L_m$ 
provided by the conductor-discriminant formula 
(see e.g. Washington \cite[Theorem 3.11, page 28]{Was97}). 
%where $\mathfrak{f}_m$ is the conductor of $L_m$. 
Because $f_m(X)=-X^3 f_{-m-3}(1/X)$, we get $L_m=L_{-m-3}$ 
for any $m\in\bZ$. 
Hence we may assume that $m\geq -1$. 
Note that (i) $m^2+3m+9=((2m+3)^2+27)/4$ is invariant under the action 
$\tau : m\mapsto -m-3$ because $\tau(2m+3)=-(2m+3)$ and 
(ii) $N_{L_m/\bQ}(\alpha)=1$, $N_{L_m/\bQ}(\alpha+1)=-1$, 
$N_{L_m/\bQ}(\alpha-1)=2m+3$. 
The simplest cubic fields $L_m$ are investigated by many authors, 
see e.g. Shanks \cite{Sha74},  Cusick \cite{Cus83}, 
Lettl \cite{Let86}, Washington \cite{Was87}, Hoshi \cite{Hos11} 
and more recent papers of 
Kashio and Sekigawa \cite{KS21}, 
Hashimoto and Aoki \cite{HA24}, 
Ogawa and Aoki \cite{OA25}, 
Komatsu \cite{Kom25}. 

Write $m^2+3m+9=3^{r_3}p_1^{e_1}\cdots p_r^{e_r}$ 
for distinct primes $p_i\geq 5$ $(1\leq i\leq r)$. 
Then we see that 
\begin{align*}
r_3=
\begin{cases}
0 & (m \not\equiv 0\ ({\rm mod}\ 3))\\
2 & (m\equiv 0, 6\ ({\rm mod}\ 9))\\
3 & (m\equiv 3\ ({\rm mod}\ 9))
\end{cases}
\end{align*}
and $p_i\equiv 1\ ({\rm mod}\ 6)$ $(1\leq i\leq r)$ 
(see e.g. Hasse \cite{Has48}, Gras \cite{Gra75}). 

\begin{theorem}[{Gras \cite{Gra73}, \cite[Proposition 2]{Gra74}, \cite[Proposition 3.10]{Gra86}, Cusick \cite[Lemma 1]{Cus83} ($m^2+3m+9$ is squarefree), Washington \cite[Proposition 1]{Was87} $(m\not\equiv 3\,({\rm mod}\ 9))$, Komatsu \cite[Theorem 3.6]{Kom04}, \cite[Lemma 1.4]{Kom07}, H\"aberle \cite[Theorem 10]{Hab10}}]\label{thG}
Let $m\in\bZ$ %$m\geq -1$ 
be an integer and $L_m=\bQ(\alpha)$ be the simplest cubic field 
where $\alpha$ is a root of $f_m(X)=X^3-mX^2-(m+3)X-1$. 
Write $m^2+3m+9=bc^3$ for $b,c\geq 0$ with cubefree integer $b$. 
Then the conductor $\mathfrak{f}_m$ of $L_m$ is given by 
\begin{align*}
\mathfrak{f}_m=\gamma\prod_{p\mid b,\, p\not\neq 3}p
\end{align*}
where 
\begin{align*}
\gamma=
\begin{cases}1& (m\not\equiv 0\ ({\rm mod}\ 3)\ 
{\rm or}\ m\equiv 12\ ({\rm mod}\ 27))\\
9& (m\equiv 0\ ({\rm mod}\ 3)\ 
{\rm and}\ m\not\equiv 12\ ({\rm mod}\ 27)). 
\end{cases}
\end{align*}
In particular, $\mathfrak{f}_m=p$ is prime if and only if 
{\rm (i)} $m\not\equiv 0\pmod{3}\ {\rm or}\ m\equiv 12\ ({\rm mod}\ 27)$ 
and {\rm (ii)} $b=p$ or $p^2$ where $p$ is prime. 
\end{theorem}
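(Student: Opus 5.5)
The plan is to use the standard description of the conductor of a cyclic cubic field. It equals $\prod_{p\ \text{ramified},\,p\neq 3}p$, multiplied by $9$ if $3$ ramifies: a ramified prime $p\neq 3$ is tamely ramified with $p\equiv 1\pmod 3$ and contributes exponent $1$, while a ramified $3$ is wildly ramified and contributes exactly $3^2$. I read the product in the statement as running over $p\neq 3$. So it suffices to prove two things:
\begin{enumerate}[(a)]
\item a prime $p\geq 5$ ramifies in $L_m$ if and only if $3\nmid v_p(m^2+3m+9)$, i.e. $p\mid b$;
\item $3$ ramifies if and only if $m\equiv 0\pmod 3$ and $m\not\equiv 12\pmod{27}$.
\end{enumerate}
The ``in particular'' clause then follows at once, since $\gamma=1$ and exactly one prime divides $b$ precisely when $b=p$ or $p^2$.

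For both parts I would pass to $K=\bQ(\zeta_3)$ and use Kummer theory. Put
\begin{align*}
\beta=\frac{2m+3+3\sqrt{-3}}{2}\in\mathcal{O}_K,\qquad N_{K/\bQ}(\beta)=\frac{(2m+3)^2+27}{4}=m^2+3m+9 .
\end{align*}
The first key step is to verify that $L_mK=K(\sqrt[3]{\beta})$. I would do this via the classical Lagrange resolvent $\alpha+\zeta_3\sigma(\alpha)+\zeta_3^2\sigma^2(\alpha)$: its cube lies in $K$, and a direct computation using the reduced form $X^3-3(m^2+3m+9)X-(2m+3)(m^2+3m+9)$ should show that this cube equals $\beta$ times a cube in $K$, perhaps up to a factor such as $(m^2+3m+9)$ or its square, which does not change the Kummer extension modulo cubes. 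Because $[K:\bQ]=2$ is prime to $3$, a prime $p$ ramifies in $L_m$ if and only if a prime above $p$ ramifies in $L_mK/K$.

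For (a), take $p\geq 5$ with $p\mid m^2+3m+9$. Then $p\equiv 1\pmod 3$, so $p=\pi\bar\pi$ splits in $K$. If both $\pi$ and $\bar\pi$ divided $\beta$, then $p$ would divide $\beta-\bar\beta=3\sqrt{-3}$, which is impossible. Hence $v_\pi(\beta)=v_p(m^2+3m+9)$ and $v_{\bar\pi}(\beta)=0$. Kummer theory for a prime not dividing $3$ then says that $\pi$ ramifies in $K(\sqrt[3]{\beta})$ exactly when $3\nmid v_\pi(\beta)$, and $\bar\pi$ is unramified. This proves (a). Primes $p\geq 5$ not dividing $m^2+3m+9$ do not divide the discriminant of $f_m$, so they are unramified.

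For (b), suppose first $3\nmid m$. Then $3\nmid m^2+3m+9$, so $3\nmid{\rm disc}(f_m)$ and $3$ is unramified. For $3\mid m$, I would work at $\lambda=\sqrt{-3}$ and use the criterion that, for a $\lambda$-unit $u$ (after removing cube factors of $\lambda$), $K(\sqrt[3]{u})/K$ is unramified at $\lambda$ if and only if $u$ is congruent to a cube modulo $\lambda^4$, i.e. $u\equiv\pm1\pmod{\lambda^{4}}$ up to cubes of units. I would write $m=3k$ and split into the cases $m\equiv 0,6\pmod 9$ and $m\equiv 3\pmod 9$. I expect the main obstacle to be normalizing $\beta$ to such a unit and tracking the congruence modulo $\lambda^4$, i.e. modulo $9$ on the relevant coordinates, in each case; the outcome should be that $\beta$ is a cube times a unit $\equiv\pm1 \pmod{\lambda^4}$ exactly when $m\equiv 12\pmod{27}$. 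As a cross-check, one could instead compute the $3$-adic valuation of the index $[\mathcal{O}_{L_m}:\bZ[\alpha]]$ directly from a Newton polygon of $f_m(X+c)$ for suitable $c$. In the unramified case this should give $v_3(\mathfrak{f}_m)=0$, and otherwise $v_3(\mathfrak{f}_m)=2$.
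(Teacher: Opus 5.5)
The paper gives no proof of this theorem; it quotes it from Gras, Cusick, Washington, Komatsu and H\"aberle. Your argument therefore has to stand on its own. The framework is the standard one: the conductor is $9^{\epsilon}$ times the product of the ramified primes $p\neq 3$, and you use Kummer theory over $K=\bQ(\zeta_3)$. Part (a) reaches the right conclusion. However, two steps are wrong, and together they break part (b), which is the real content of the theorem.

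\textbf{The Kummer generator.} Apply Cardano to the reduced form $X^3-3DX-(2m+3)D$ with $D=m^2+3m+9$. This gives $u^3=D\beta=\beta^2\bar\beta$, so $L_mK=K(\sqrt[3]{\beta^2\bar\beta})$.

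Multiplying by $D$ is not harmless. $K(\sqrt[3]{D\beta})=K(\sqrt[3]{\beta})$ holds only when $D$ is a perfect cube, for example $m=3$. A quick test confirms this: since $L_mK/\bQ$ is abelian, a Kummer generator $\theta$ must satisfy $\theta\bar\theta\in K^{\times 3}$. This holds for $\beta^2\bar\beta$ but not for $\beta$, since $\beta\bar\beta=D$. Your own computation in (a) shows the problem: in $K(\sqrt[3]{\beta})$ the prime $\pi$ ramifies but $\bar\pi$ does not, which is impossible in a field Galois over $\bQ$.

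In (a) the conclusion survives, because $v_\pi(\beta^2\bar\beta)=2e$ and $v_{\bar\pi}(\beta^2\bar\beta)=e$. At $3$ it does not survive:
\begin{itemize}
\item If $m\equiv 0,6\pmod 9$, then $v_\lambda(\beta)=2$.
\item If $m\equiv 3\pmod 9$, then $\beta\equiv\delta=\frac{1-c\sqrt{-3}}{2}$ modulo cubes, with $c=\frac{2m+3}{9}$. Its rational coordinate is $\frac12\equiv 5\pmod 9$, so $\delta$ is never $\equiv\pm1$ modulo $3\sqrt{-3}$.
\end{itemize}
So with $\beta$ you would conclude that $3$ ramifies whenever $3\mid m$. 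That contradicts $\mathfrak{f}_{12}=7$.

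\textbf{The local criterion at $\lambda$.} Let $u$ be a $\lambda$-unit. Then $K(\sqrt[3]{u})$ is unramified at $\lambda$ iff $u$ is a cube modulo $\lambda^{3e/2}=\lambda^3$, where $e=v_\lambda(3)=2$. Concretely, writing $u=x+y\sqrt{-3}$, this means $u\equiv\pm1\pmod{3\sqrt{-3}}$, i.e.\ $x\equiv\pm1\pmod 9$ and $3\mid y$. Being a cube modulo $\lambda^4$, i.e.\ $u\equiv\pm1\pmod 9$, is instead the condition for $\lambda$ to \emph{split}.

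Check this at $m=12$ with the correct generator. There $\theta\equiv 7\cdot\frac{1-3\sqrt{-3}}{2}\equiv -1+3\sqrt{-3}\pmod 9$. This is $\equiv -1$ modulo $\lambda^3$ but not $\pm1$ modulo $9$, and indeed $3$ is inert in $L_{12}=\bQ(\zeta_7)^+$. Your criterion would select the splitting locus $m\equiv 39\pmod{81}$ instead of $m\equiv 12\pmod{27}$.

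\textbf{The corrected computation.} With both corrections the argument is short. For $m=3k$ write $\beta=3\gamma$ with $\gamma=\frac{(2k+1)+\sqrt{-3}}{2}$. Then $\theta\equiv\gamma^2\bar\gamma=\gamma N(\gamma)$ modulo cubes.
\begin{itemize}
\item If $m\equiv 0,6\pmod 9$, then $\gamma$ is a $\lambda$-unit. The $\sqrt{-3}$-coordinate of $\gamma N(\gamma)$ is $\frac{(2k+1)^2+3}{8}$, a $3$-adic unit, so $3$ ramifies.
\item If $m\equiv 3\pmod 9$, then $\gamma=\sqrt{-3}\,\delta$, and $(\sqrt{-3})^2\,\overline{\sqrt{-3}}=(-\sqrt{-3})^3$ is a cube. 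So $\theta\equiv\delta N(\delta)=\frac{(1-c\sqrt{-3})(1+3c^2)}{8}$. This is $\equiv\pm1\pmod{3\sqrt{-3}}$ iff $3\mid c$, i.e.\ iff $m\equiv 12\pmod{27}$.
\end{itemize}
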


\begin{corollary}\label{corG}
%Let $f_m(X)=X^3-mX^2-(m+3)X-1$ be the simplest cubic polynomial
%Let $m\geq -1$ be an integer. 
Let $m\in\bZ$ %$m\geq -1$ 
be an integer and $L_m=\bQ(\alpha)$ be the simplest cubic field 
with conductor $\mathfrak{f}_m$ where 
$\alpha$ is a root of $f_m(X)=X^3-mX^2-(m+3)X-1$.  
Let $\mathcal{O}_{L_m}$ be the ring of integers of $L_m$. 
Write $m^2+3m+9=3^{r_3}p_1^{e_1}\cdots p_r^{e_r}$ 
for distinct primes $p_i\equiv 1\pmod{6}$ %$p_i\geq 5$ 
$(1\leq i\leq r)$ where $r_3=0$ $($resp. $2$, $3$$)$ 
if $m \not\equiv 0\ ({\rm mod}\ 3)$ 
$($resp. $m\equiv 0, 6\ ({\rm mod}\ 9)$, 
$m\equiv 3\ ({\rm mod}\ 9))$. 
Then we have 
\begin{align*}
(m^2+3m+9)/\mathfrak{f}_m=
[\mathcal{O}_{L_m}:\bZ[\alpha]]=3^kp_1^{t_1}\cdots p_r^{t_r}
\end{align*}
where 
\begin{align*}
k=
\begin{cases}
0 & (m \not\equiv 0\ ({\rm mod}\ 3)\ {\rm or}\ m\equiv 0, 6\ ({\rm mod}\ 9))\\
1 & (m\equiv 3, 21\ ({\rm mod}\ 27))\\
3 & (m\equiv 12\ ({\rm mod}\ 27))
\end{cases},\quad 
t_i=
\begin{cases}
e_i-1 & (e_i \not\equiv 0\ ({\rm mod}\ 3))\\
e_i & (e_i\equiv 0\ ({\rm mod}\ 3)).
\end{cases}
\end{align*}
In particular, we have\\
{\rm (1)} 
$[\mathcal{O}_{L_m}:\bZ[\alpha]]=1$, i.e. $f_m(X)$ is monogenic, 
if and only if 
$m^2+3m+9=\mathfrak{f}_m$ 
if and only if 
\begin{align*}
m^2+3m+9=
\begin{cases}
9\ (m=0)\\
p_1\cdots p_r\ (m \not\equiv 0\ ({\rm mod}\ 3)\ 
{\rm and}\ m^2+3m+9\ {\rm is\ squarefree})\\
9p_1\cdots p_{r-1}\ (m\equiv 0, 6\ ({\rm mod}\ 9)\ {\rm and}\ (m^2+3m+9)/9\ 
{\rm is\ squarefree});
\end{cases}
\end{align*}
{\rm (2)} 
$[\mathcal{O}_{L_m}:\bZ[\alpha]]=3$ 
if and only if 
$m^2+3m+9=3\,\mathfrak{f}_m$ 
if and only if 
\begin{align*}
m^2+3m+9=
\begin{cases}
27\ (m=3)\\
27p_1\cdots p_{r-1}\ (m \equiv 3, 21\ ({\rm mod}\ 27)\ {\rm and}\ (m^2+3m+9)/27\ 
{\rm is\ squarefree});
\end{cases}
\end{align*}
{\rm (3)} 
$[\mathcal{O}_{L_m}:\bZ[\alpha]]=27$ 
if and only if 
$m^2+3m+9=27\,\mathfrak{f}_m$ 
if and only if 
\begin{align*}
m^2+3m+9=27p_1\cdots p_{r-1}\ (m \equiv 12\ ({\rm mod}\ 27)\ {\rm and}\ (m^2+3m+9)/27\ 
{\rm is\ squarefree}).
\end{align*}
Moreover, if $[\mathcal{O}_{L_m}:\bZ[\alpha]]=1$, then 
the unit group $\mathcal{O}_{L_m}^\times=\bZ[\alpha]^\times$ 
of $L_m$ is generated by $-1$, $\alpha$, $\alpha+1$; 
$\mathcal{O}_{L_m}^\times=\langle -1,\alpha,\alpha+1\rangle$ 
$($see Shanks \cite[page 1138]{Sha74}, Thomas \cite[Theorem 3.10]{Tho79}, 
Cusick \cite[Theorem 1]{Cus83}, 
Washington \cite[Corollary, page 372]{Was87}, 
see also Conrad \cite[Theorem 5.12]{Con}$)$. 
\end{corollary}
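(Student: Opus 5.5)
The corollary follows by dividing $m^2+3m+9=3^{r_3}p_1^{e_1}\cdots p_r^{e_r}$ by the conductor formula of Theorem \ref{thG}, prime by prime. The identity $(m^2+3m+9)/\mathfrak{f}_m=[\mathcal{O}_{L_m}:\bZ[\alpha]]$ is already recorded in the introduction: ${\rm disc}(f_m)=(m^2+3m+9)^2=d_m[\mathcal{O}_{L_m}:\bZ[\alpha]]^2$ and $d_m=\mathfrak{f}_m^2$. Everything else is bookkeeping.

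\textbf{Primes $p_i\geq 7$.} Write $m^2+3m+9=bc^3$ with $b$ cubefree. Then $p_i\mid b$ exactly when $e_i\not\equiv 0\pmod 3$, and $\mathfrak{f}_m$ contains $p_i$ to the first power in that case and not at all otherwise. Hence the $p_i$-exponent of the quotient is $t_i=e_i-1$ or $e_i$, as stated.

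\textbf{The prime $3$.} Here $\mathfrak{f}_m$ has $3$-part $\gamma$, and I split into cases.
\begin{itemize}
\item If $m\not\equiv 0\pmod 3$, then $r_3=0$ and $\gamma=1$, so $k=0$.
\item If $m\equiv 0,6\pmod 9$, then $r_3=2$. Since $12\equiv 3\pmod 9$, $m\not\equiv 12\pmod{27}$, so $\gamma=9$ and $k=0$.
\item If $m\equiv 3\pmod 9$, then $r_3=3$, i.e.\ $m\equiv 3,12,21\pmod{27}$. For $m\equiv 3,21\pmod{27}$ we have $\gamma=9$, so $k=1$. For $m\equiv 12\pmod{27}$ we have $\gamma=1$, so $k=3$.
\end{itemize}
The only real check is that $r_3$ is exactly $3$ (not larger) when $m\equiv 3\pmod 9$. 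Writing $m=9j+3$ gives $m^2+3m+9=81j^2+81j+27=27(3j^2+3j+1)$, and the last factor is $\equiv 1\pmod 3$. This identity also confirms the stated values of $r_3$.

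\textbf{Parts (1)--(3).} The index is $1$ iff $k=0$ and every $t_i=0$. Now $t_i=0$ forces $e_i=1$, since $e_i\equiv 0\pmod 3$ gives $t_i=e_i\geq 3$. So the index is $1$ iff $m\not\equiv 3\pmod 9$ and the prime-to-$3$ part of $m^2+3m+9$ is squarefree. Splitting by $m\bmod 3$ gives the list in (1), with $m=0$ (where the prime-to-$3$ part is $1$, so $m^2+3m+9=9$) listed separately. Index $3$ means $k=1$ and all $t_i=0$, which gives (2), with $m=3$ (value $27$) the degenerate case $r=0$. Index $27$ means $k=3$ and all $t_i=0$, which gives (3). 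The index cannot equal $27$ through some $t_i=3$, because the $p_i$ are coprime to $3$.

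\textbf{Units.} The final assertion about $\mathcal{O}_{L_m}^\times$ I would not reprove; I would cite Thomas and Washington as listed. The main obstacle is just keeping the $3$-adic case analysis straight, in particular reconciling $r_3$ with the residue of $m$ modulo $27$.
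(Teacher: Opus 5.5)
Your proposal is correct and takes the same approach as the paper. The paper gives no separate proof: it reads the corollary off from the conductor formula of Theorem \ref{thG} together with $m^2+3m+9=\mathfrak{f}_m\,[\mathcal{O}_{L_m}:\bZ[\alpha]]$, and cites the literature for the unit group, exactly as you do. One small point: your substitution $m=9j+3$ only confirms $r_3=3$; the value $r_3=2$ for $m\equiv 0,6\pmod 9$ comes from writing $m=3j$, so that $m^2+3m+9=9(j^2+j+1)$ with $j\not\equiv 1\pmod 3$, although these values of $r_3$ are part of the statement's setup anyway.
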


There exist $2^{r-1}$ (resp. $1$) 
cyclic cubic fields with conductor $\mathfrak{f}_m=p_1\cdots p_r$ or 
$\mathfrak{f}_m=9p_1\cdots p_{r-1}$ (resp. $\mathfrak{f}_m=9$). 
Gras \cite[Table 1]{Gra75} 
(resp. \cite[Table 2, Table 3]{Gra75}) 
gives the class numbers $h$ of the cyclic cubic fields of $\bQ$ 
with conductor $\mathfrak{f}\leq 4000$ 
(resp. $4000<\mathfrak{f}<10000$ and $h\equiv 0\pmod{9}$, 
$4000<\mathfrak{f}<10000$ and $h\equiv 0\pmod{4}$). 
Note that if we take $m\in\bQ$, then $L_m$ 
gives all the cyclic cubic fields of $\bQ$ 
(see Serre \cite[Section 1.1]{Ser92}, 
Jensen, Ledet and Yui \cite[Section 2.1]{JLY02}) but, for our case $m\in\bZ$, 
$L_m$ gives only the special cyclic cubic fields (for example, 
$2$ should be inert in $L_m$ because $f_m(X)$ is irreducible 
over $\bZ/2\bZ$). 

Let $\Cl(L_m)$ be the ideal class group of $L_m$. 
Then the class number $h_m=|\Cl(L_m)|$ of $L_m$
is of the form $h_m=A^2+3B^2$, in particular, $h_m\equiv 0, 1\pmod{3}$ 
(see e.g. Hasse \cite{Has48}, 
Shanks \cite[page 1145]{Sha74}). 
%In particular, $h_m\equiv 0,1\pmod{3}$. 
%
\begin{theorem}[{Shanks \cite[Theorem, page 1144]{Sha74}}]
Let $m\in\bZ$ be an integer and 
$L_m=\bQ(\alpha)$ be the simplest cubic field 
with ideal class group $\Cl(L_m)$ 
where $\alpha$ is a root of $f_m(X)=X^3-mX^2-(m+3)X-1$. 
Let $x\in \Cl(L_m)$ be an element of order $s$. 
If $s$ is divisible only by primes $\equiv 2\pmod{3}$, then 
the $s$-rank of $\Cl(L_m)$ must be even. 
\end{theorem}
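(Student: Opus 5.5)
The plan is to use the Galois module structure of $\Cl(L_m)$. Put $G={\rm Gal}(L_m/\bQ)=\langle\sigma\rangle\simeq C_3$ and fix a prime $\ell\equiv 2\pmod 3$. It suffices to show that for every such $\ell$ the $\ell$-rank of $\Cl(L_m)$ is even, and moreover that each layer $\ell^{j}\Cl(L_m)[\ell]$, i.e.\ each $\mathbb{F}_\ell$-space $\ell^{j}A/\ell^{j+1}A$ with $A$ the $\ell$-Sylow subgroup, has even dimension. The statement for a general $s$ divisible only by primes $\equiv 2\pmod 3$ then follows by working one prime at a time, since the $s$-rank is controlled by the $\ell$-ranks for $\ell\mid s$.

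First I will show that the norm element $N=1+\sigma+\sigma^2$ annihilates $A$. For an ideal $\mathfrak{a}$ of $L_m$ we have $\mathfrak{a}^{N}=N_{L_m/\bQ}(\mathfrak{a})\mathcal{O}_{L_m}$, and this is principal because $\bQ$ has class number $1$. So $A$ is a module over $\bZ_\ell[G]/(N)\simeq\bZ_\ell[X]/(X^2+X+1)=\bZ_\ell[\zeta_3]$. When $\ell\equiv2\pmod3$, the polynomial $X^2+X+1$ is irreducible mod $\ell$, so $\bZ_\ell[\zeta_3]$ is the unramified quadratic extension of $\bZ_\ell$. In particular it is a DVR with uniformizer $\ell$ and residue field $\mathbb{F}_{\ell^2}$.

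Next I apply the structure theorem for finitely generated torsion modules over a DVR. It gives $A\simeq\bigoplus_i \bZ_\ell[\zeta_3]/(\ell^{n_i})$, and each summand is isomorphic as an abelian group to $(\bZ/\ell^{n_i}\bZ)^2$. Hence every cyclic factor in the abelian-group decomposition of $A$ occurs with even multiplicity. Each $\ell^jA/\ell^{j+1}A$ is an $\mathbb{F}_{\ell^2}$-vector space, so its $\mathbb{F}_\ell$-dimension is even. Taking $j=0$ shows that the $\ell$-rank of $\Cl(L_m)$ is even. The same argument at every $\ell\mid s$ gives the claim for $s$: the $s$-rank, read as the number of cyclic factors of order divisible by $s$, or as the ranks of the $\ell$-parts, is even in either interpretation.

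The main obstacle is conceptual rather than computational. It is checking that $N$ really acts as zero on $\Cl(L_m)$, which uses that $L_m/\bQ$ is Galois and that $h_{\bQ}=1$. It is also confirming that $\ell\neq3$, so $\bZ_\ell[G]$ splits as $\bZ_\ell\times\bZ_\ell[\zeta_3]$ with $A$ killed by the first factor. Once these are in place, the structure of $\bZ_\ell[\zeta_3]$ for $\ell\equiv2\pmod3$ does the rest. Nothing specific to the family $f_m$ is needed; the same argument works for any cyclic cubic field.
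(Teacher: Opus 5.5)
The paper gives no proof of this statement. It is quoted from Shanks and used only to cut down the list of possible values of $h_m$, so there is no argument in the paper to compare yours against. Your proposal is a correct, self-contained proof, and it is the standard Galois-module argument. Annihilation by $N=1+\sigma+\sigma^2$, which follows from $\mathfrak{a}^{N}=N_{L_m/\bQ}(\mathfrak{a})\mathcal{O}_{L_m}$, makes each $\ell$-primary part $A_\ell$ a finite module over $\bZ_\ell[X]/(X^2+X+1)$. For $\ell\equiv 2\pmod 3$ this ring is a DVR with uniformizer $\ell$ and residue field $\mathbb{F}_{\ell^2}$, so $A_\ell\simeq B_\ell\times B_\ell$ as abelian groups. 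This is stronger than the cited statement. It is consistent with the group structures computed in Section~\ref{S3}, e.g.\ $[14,2]$, $[35,5]$, $[28,4,2,2]$, $[48,16]$.

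The one step you should tighten is the passage to composite $s$. The $s$-rank is not governed by the $\ell$-ranks for $\ell\mid s$ but by the $\ell^{v_\ell(s)}$-ranks. Because the invariant factors of $\Cl(L_m)$ form a divisibility chain, the number of them divisible by $s$ is
\[
\min_{\ell\mid s}\dim_{\mathbb{F}_\ell}\bigl(\ell^{v_\ell(s)-1}A_\ell/\ell^{v_\ell(s)}A_\ell\bigr).
\]
Each of these dimensions is even by your layer computation, so the minimum is even.

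An equivalent finish: $\bigoplus_{\ell\mid s}A_\ell\simeq B\times B$ with $B=\bigoplus_{\ell\mid s}B_\ell$. The invariant factors of $B\times B$ are those of $B$, each taken twice. The count of invariant factors of $\Cl(L_m)$ divisible by $s$ depends only on $\bigoplus_{\ell\mid s}A_\ell$, so it is even. The element $x$ of order $s$ in the hypothesis only guarantees that this even number is at least $2$.
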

For example, if $s=p^{2n+1}$, $p\equiv 2\pmod{3}$ is prime, 
$\gcd(s,t)=1$, then $h_m=st$ does not occur. 
In particular, possible class numbers $h_m=|\Cl(L_m)|$ of $L_m$ 
with $h_m<43$ are $h_m=1,3,4,7,9,12,13,16,19,21,25,27,28,31,36,37,39$. 
\begin{theorem}[{see e.g. Gras \cite[Section 3, page 94]{Gra75}, 
see also Lemmermeyer \cite[Theorem 1]{Lem13} for the ambiguous class number formula $|\Cl(L_m)^{C_3}|=3^{r-1}$}]\label{thG2}
%Let $L$ be a cubic field of $\bQ$ with conductor $\mathfrak{f}$. 
Let $m\in\bZ$ be an integer and 
$L_m=\bQ(\alpha)$ be the simplest cubic field with 
class number $h_m$ and conductor $\mathfrak{f}_m$  
where $\alpha$ is a root of $f_m(X)=X^3-mX^2-(m+3)X-1$. 
Then $\mathfrak{f}_m=9$ or 
$\mathfrak{f}_m=p_1\cdots p_r$ or $\mathfrak{f}_m=9p_1\cdots p_{r-1}$ 
for distinct primes $p_i\equiv 1\pmod{6}$ $(1\leq i\leq r)$ and we have:\\  
{\rm (i)} $h_m\equiv 1\pmod{3}$ if and only if $\Cl(L_m)^{C_3}=\{1\}$ 
if and only if $\mathfrak{f}_m=9$ or $\mathfrak{f}_m=p_1$ is prime with 
$p_1 \equiv 1\pmod{6}$;\\
{\rm (ii)} $3^{r-1}$ divides $h_m$ 
if $\mathfrak{f}_m=p_1\cdots p_r$ or 
$\mathfrak{f}_m=9p_1\cdots p_{r-1}$ with $r\geq 2$.
\end{theorem}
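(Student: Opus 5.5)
The plan is to derive both parts from genus theory for the cyclic cubic extension $L_m/\bQ$ with $G=\mathrm{Gal}(L_m/\bQ)=\langle\sigma\rangle\simeq C_3$. The first claim, that $\mathfrak{f}_m$ is $9$, $p_1\cdots p_r$ or $9p_1\cdots p_{r-1}$, is read off from Theorem~\ref{thG} together with the congruence $p_i\equiv 1\pmod 6$ for the primes dividing $m^2+3m+9$. By that theorem, $\mathfrak{f}_m=\gamma\prod p$ with $\gamma\in\{1,9\}$ and the product over primes $p\neq 3$ dividing the cubefree part $b$. All such primes divide $m^2+3m+9$, so they are $\equiv 1\pmod 6$. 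We relabel them so that, when $\gamma=9$, the prime $3$ counts as the $r$-th ramified prime. This gives $r$ ramified primes in every case, with $\mathfrak{f}_m=9$ corresponding to $r=1$, $\gamma=9$ and empty product.

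For the class group we then invoke the ambiguous class number formula $|\Cl(L_m)^{C_3}|=3^{t-1}$ cited in the statement, where $t$ is the number of primes ramified in $L_m/\bQ$. The formula is $h(\bQ)\prod e_p/([L_m:\bQ]\,[E_\bQ:E_\bQ\cap N L_m^\times])$, and the unit index is $1$ since $-1=N(-1)$ in odd degree. In our normalization $t=r$, so $|\Cl(L_m)^{C_3}|=3^{r-1}$. Part (ii) follows at once, because $\Cl(L_m)^{C_3}$ is a subgroup of $\Cl(L_m)$ and so its order $3^{r-1}$ divides $h_m$.

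For (i), suppose first that $\Cl(L_m)^{C_3}=\{1\}$. Let $A$ be the $3$-Sylow subgroup of $\Cl(L_m)$, viewed as a $\bZ_3[G]$-module. The fixed points $A^G$ form a subgroup of $\Cl(L_m)^{C_3}$, so $A^G=1$. Since $G$ is a $3$-group acting on the finite $3$-group $A$, the orbit-counting congruence $|A^G|\equiv|A|\pmod 3$ gives $|A|\equiv 1$, hence $A=1$. So $3\nmid h_m$, and the relation $h_m=A^2+3B^2$ noted above forces $h_m\equiv 1\pmod 3$. Conversely, if $h_m\equiv1\pmod3$, then $\Cl(L_m)^{C_3}$, whose order is a power of $3$ dividing $h_m$, must be trivial. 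Finally, $3^{r-1}=1$ holds exactly when $r=1$, i.e. $\mathfrak{f}_m=9$ or $\mathfrak{f}_m=p_1$ prime.

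The main obstacle is the ambiguous class number formula itself, specifically the norm-index computation $[E_\bQ:E_\bQ\cap N L_m^\times]=1$. Here I would rely on the cited result of Lemmermeyer rather than reprove it. The remaining care is in counting ramified primes correctly, using Theorem~\ref{thG} to confirm that $3$ ramifies exactly when $\gamma=9$.
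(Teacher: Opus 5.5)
Your proposal is correct and follows the route the paper indicates: the shape of $\mathfrak{f}_m$ comes from Theorem~\ref{thG} together with $p\equiv 1\pmod 6$ for the primes $p\neq 3$ dividing $m^2+3m+9$, and (i) and (ii) are read off from the ambiguous class number formula $|\Cl(L_m)^{C_3}|=3^{r-1}$; the unit norm index there is trivially $1$ because $N(-1)=-1$, so the real input is Chevalley's formula itself. For (i) you could skip the $3$-Sylow argument and the appeal to $h_m=A^2+3B^2$: counting $C_3$-orbits on all of $\Cl(L_m)$ gives $h_m\equiv|\Cl(L_m)^{C_3}|=3^{r-1}\pmod 3$ directly.
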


Note that $L_m$ $(m\geq -1)$ with $\mathfrak{f}_m=9$ if and only if 
$m=0,3,54$ (see Theorem \ref{thH4}). 
Because $L_0=L_3=L_{54}=\bQ(\zeta_9+\zeta_9^{-1})$, 
we have $h_0=h_3=h_{54}=1$. 
Hence if $h_m\equiv 1\pmod{3}$, e.g. $h_m=1$, 
then {\rm (i)} $m=0,3,54$ (with $h_m=1$) or 
{\rm (ii)-1} $m\not\equiv 0\pmod{3}\ {\rm or}\ m\equiv 12\ ({\rm mod}\ 27)$ 
and {\rm (ii)-2} $m^2+3m+9=pc^3$ or $p^2c^3$ 
where $p\equiv 1\pmod{6}$ is prime and $c\geq 1$. 

Lettl \cite[page 660]{Let86} established 
a lower bound for $L(1,\chi)L(1,\overline{\chi})$ 
and hence got a lower bound of the class number $h_m$ 
of the simplest cubic field $L_m$ 
with the aid of the class number formula 
(see e.g. Washington \cite[Chapter 4]{Was97}) 
and an upper bound of the regulator
$R_m<(\log (m^2+3m+9))^2/4$ of $L_m$ (see Lettl \cite[Lemma 1]{Let86}): 
\begin{theorem}[{Lettl \cite[Section 2, pages 660--661]{Let86}}]\label{thLettl}
Let $m\in\bZ$ be an integer and $L_m=\bQ(\alpha)$ be the simplest cubic field 
with class number $h_m$, conductor $\mathfrak{f}_m$ and regulator $R_m$ 
where $\alpha$ is a root of $f_m(X)=X^3-mX^2-(m+3)X-1$. 
Let $\mathcal{O}_{L_m}$ be the ring of integers of $L_m$. 
Let $L(s,\chi)$ and $L(s,\overline{\chi})$ be the Dirichlet $L$-functions 
associated to the nontrivial cubic Dirichlet characters 
$\chi$ and $\overline{\chi}$ modulo $\mathfrak{f}_m$. 
If $\mathfrak{f}_m>10^5$, then 
\begin{align*}
|L(1,\chi)|^2=L(1,\chi)L(1,\overline{\chi})>0.023\,(\mathfrak{f}_m)^{-0.054}.
\end{align*}
Hence, by the class number formula
\begin{align*}
h_m=\frac{\mathfrak{f}_m\,|L(1,\chi)|^2}{4R_m}
\end{align*}
and the upper bound of the regulator $R_m<(\log (m^2+3m+9))^2/4$, 
we get that if $\mathfrak{f}_m>10^5$, then 
\begin{align*}
h_m>0.023\,\frac{(\mathfrak{f}_m)^{0.946}}{(\log (m^2+3m+9))^2}
\end{align*}
where $m^2+3m+9=\mathfrak{f}_m\,[\mathcal{O}_{L_m}:\bZ[\alpha]]$. 
In particular, for the fixed index $[\mathcal{O}_{L_m}:\bZ[\alpha]]$ 
and the fixed integer $h\geq 1$, 
there exist only finitely many integers $m\in\bZ$ such that $h_m=h$. 
For example, if $[\mathcal{O}_{L_m}:\bZ[\alpha]]=1$ $($resp. $3$, $27$$)$, 
i.e. $m^2+3m+9=\mathfrak{f}_m$ 
$($resp. $3\mathfrak{f}_m$, $27\mathfrak{f}_m$$)$, and $m\geq 410$ 
$($resp. $794$, $2870$$)$, 
i.e. $m^2+3m+9\geq 169339$ $($resp. $632827$, $8245519$$)$, then $h_m>14$. 
\end{theorem}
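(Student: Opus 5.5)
The cited part of Theorem \ref{thLettl}, namely the lower bound $L(1,\chi)L(1,\overline{\chi})>0.023\,(\mathfrak{f}_m)^{-0.054}$ for $\mathfrak{f}_m>10^5$ and the regulator bound $R_m<(\log(m^2+3m+9))^2/4$, is taken as given. What remains is the consequence for $h_m$ and the explicit thresholds.

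First I combine the two bounds in the class number formula $h_m=\mathfrak{f}_m|L(1,\chi)|^2/(4R_m)$:
\begin{align*}
h_m>\frac{\mathfrak{f}_m\cdot 0.023\,\mathfrak{f}_m^{-0.054}}{(\log(m^2+3m+9))^2}
=0.023\,\frac{\mathfrak{f}_m^{0.946}}{(\log(m^2+3m+9))^2}.
\end{align*}
Fix the index $I=[\mathcal{O}_{L_m}:\bZ[\alpha]]$. By Corollary \ref{corG}, $\mathfrak{f}_m=(m^2+3m+9)/I$. Put $N=m^2+3m+9$. The lower bound becomes $g_I(N)=0.023\,(N/I)^{0.946}/(\log N)^2$. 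This tends to infinity as $N\to\infty$, and for $N>e^{2/0.946}$ it is increasing in $N$, since $\frac{d}{dN}\log g_I=0.946/N-2/(N\log N)>0$ there. So for fixed $I$ and fixed $h$, the equation $h_m=h$ forces $N$ to be bounded. Hence $|m|$ is bounded, and only finitely many $m$ remain (using $L_m=L_{-m-3}$, it suffices to take $m\ge -1$).

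For the explicit statements I take $I=1,3,27$. Let $m_0=410,794,2870$ respectively, and check the arithmetic $N(m_0)=169339,\ 632827,\ 8245519$. In each case $\mathfrak{f}_m=N/I\ge 169339,\ 210942.3,\ 305389.6$, which all exceed $10^5$, so Lettl's bound applies. By monotonicity of $g_I$ it is enough to evaluate at $m=m_0$. For example, with $I=1$, $N=169339$: $\log N\approx 12.04$, $(\log N)^2\approx 145.0$, and $N^{0.946}=\exp(0.946\cdot 12.04)\approx \exp(11.39)\approx 8.85\times 10^4$. This gives $g_1\approx 0.023\cdot 8.85\times10^4/145\approx 14.0$. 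I would evaluate each case with more precision to confirm that $g_I(N(m_0))>14$. The fact that $m\ge m_0$ implies $N\ge N(m_0)$ follows because $N$ is increasing for $m\ge -1$.

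The main obstacle is numerical tightness. The values sit right at the threshold $14$ (presumably $m_0$ was chosen minimal), so the evaluation needs careful rounding, with interval-safe estimates of $\log N$ and of $N^{0.946}$. I would do it with PARI/GP at high precision, and also check the value at $m_0-1$ to confirm that it is the cutoff.
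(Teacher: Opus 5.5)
Your proposal is correct and is essentially the paper's own argument. The paper likewise takes Lettl's bound for $L(1,\chi)L(1,\overline{\chi})$ and the regulator bound as inputs and inserts them into the class number formula; the thresholds $m\geq 410$, $794$, $2870$ then come from direct numerical evaluation, where your $g_I$ is about $14.025$, $14.025$, $14.003$ and falls below $14$ at $m_0-1$. Your monotonicity argument for $g_I$ makes explicit a step the paper leaves implicit. For the finiteness claim, also cover the trivial case $\mathfrak{f}_m\leq 10^5$, where Lettl's bound does not apply but $m^2+3m+9=I\,\mathfrak{f}_m\leq 10^5 I$ is bounded anyway.
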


By using Theorem \ref{thLettl}, Lettl \cite{Let86} got: 
\begin{theorem}[{Lettl \cite[Theorem, page 661]{Let86}}]\label{thLet86}
Let $m\geq -1$ be an integer and $L_m=\bQ(\alpha)$ 
be the simplest cubic field 
with class number $h_m$ and conductor $\mathfrak{f}_m$ 
where $\alpha$ is a root of $f_m(X)=X^3-mX^2-(m+3)X-1$. 
Let $\mathcal{O}_{L_m}$ be the ring of integers of $L_m$. 
Assume that $\mathfrak{f}_m=p$ is prime. Then\\ 
{\rm (1)} If $[\mathcal{O}_{L_m}:\bZ[\alpha]]=1$, i.e. 
$m^2+3m+9=\mathfrak{f}_m$, then 
$h_m<16$ holds only for the $19$ integers $m\geq -1$. 
More precisely, there exist exactly 
$7$ $($resp. $5, 6, 1)$ integers $m=-1,1,2,4,7,8,10$ 
$($resp. $m=11,17,23,25,29$, $m=16,28,32,38,43,49$, $m=31$$)$ 
such that $h_m=1$ $($resp. $4$, $7$, $13$$)$.\\
{\rm (2)} If $[\mathcal{O}_{L_m}:\bZ[\alpha]]=27$, i.e. 
$m^2+3m+9=27\mathfrak{f}_m$, then 
$h_m<43$ holds only for the $12$ integers $m\geq -1$. 
More precisely, there exist exactly 
$3$ $($resp. $2, 2, 1, 2, 1, 1)$ integers 
$m=12,39,93$ $($resp. $m=120,228$, $m=255,309$, $m=498$, 
$m=336,822$, $m=795$, $m=471$$)$ 
such that $h_m=1$ 
$($resp. $4$, $7$, $13$, $28$, $31$, $37$$)$. 
\end{theorem}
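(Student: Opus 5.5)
The plan is to reduce the statement to a finite computation, using Theorem \ref{thLettl} to bound $m$ and then computing class numbers in the remaining range. The reduction has three steps.

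First, fix the index. In case (1) the index is $1$ and $\mathfrak{f}_m=p$ is prime, so $m^2+3m+9=p$. In case (2) the index is $27$, so $m^2+3m+9=27p$ with $m\equiv 12\pmod{27}$ by Corollary \ref{corG}(3) and Theorem \ref{thG}.

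Second, make the bound explicit. Since $\mathfrak{f}_m=p$ is prime, Theorem \ref{thG2}(i) gives $h_m\equiv 1\pmod 3$. The lower bound of Theorem \ref{thLettl}, valid for $\mathfrak{f}_m>10^5$, reads
\begin{align*}
h_m>0.023\,\frac{(\mathfrak{f}_m)^{0.946}}{(\log(m^2+3m+9))^2}.
\end{align*}
The right-hand side is increasing in $m$, and we need the threshold where it exceeds $14$ in case (1) and $42$ in case (2).
\begin{itemize}
\item In case (1), Theorem \ref{thLettl} already records that $m\geq 410$ forces $h_m>14$.
\item In case (2), substitute $\mathfrak{f}_m=(m^2+3m+9)/27$ and solve numerically for the first $m$ at which the bound exceeds $42$. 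This gives some explicit $M$, a few thousand. Monotonicity then extends the inequality to all larger $m$.
\end{itemize}
Values of $m$ with $\mathfrak{f}_m\leq 10^5$ (roughly $m<316$ in case (1), $m<1640$ in case (2)) are not covered by Lettl's bound. They lie inside the finite range anyway.

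Third, run the finite search. List all $-1\leq m<410$ with $m^2+3m+9$ prime. For case (2), list all $m<M$ with $m\equiv 12\pmod{27}$ and $(m^2+3m+9)/27$ prime. For each such $m$, compute $h_m$; Gras's tables cover conductors up to $4000$, and PARI/GP \texttt{bnfinit} with \texttt{bnfcertify} handles the rest unconditionally. Then keep those with $h_m<16$ (resp. $h_m<43$), which by the congruence and Shanks' parity theorem can only take the listed values $1,4,7,13$ (resp. also $16,19,25,28,31,37$). Reading off the survivors should reproduce the stated lists.

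The main obstacle is justifying the analytic input: the constant $0.023$, the exponent $0.054$, and the regulator bound $R_m<(\log(m^2+3m+9))^2/4$. I take these from Theorem \ref{thLettl} as given. The other difficulty is making sure the case (2) threshold $M$ is computed correctly and that every class number in the finite range is certified, rather than merely GRH-conditional, so that the word ``exactly'' is justified.
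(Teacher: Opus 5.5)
Your proposal is correct and takes essentially the paper's route: an explicit analytic lower bound for $h_m$ confines $m$ to a finite range, and the class numbers in that range are computed in PARI/GP and certified unconditionally with \texttt{bnfcertify}. The only difference is that the paper uses Louboutin's sharper bound (Theorem \ref{thLou}) with the much larger target $h_m\le 1000$, giving $m\le 3422$ (index $1$) and $m\le 22165$ (index $27$), and the lists in this statement (including $m=12$, which Lettl missed) appear in the output of Section \ref{S3}; your use of Lettl's bound, combined with $h_m\equiv 1\pmod 3$, gives the smaller ranges $m<410$ and roughly $m<5600$.
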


\begin{remark}\label{r1.7}
(1) The case $m=12$ with $m^2+3m+9=189=27\mathfrak{f}_m$, 
$\mathfrak{f}_m=7$ and $h_m=1$ as in Theorem \ref{thLet86} (2) 
is missing in Lettl \cite[Theorem, page 661]{Let86} 
(see also Table $1$).\\
(2) Lettl \cite[Section 2, page 660]{Let86} called 
the simplest cubic field $L_m=\bQ(\alpha)$ with 
$[\mathcal{O}_{L_m}:\bZ[\alpha]]=1$ (resp. $27$) 
{\it of Type $A$} (resp. {\it of Type $B$}). 
\end{remark}

Similarly, by using Theorem \ref{thLettl}, 
Byeon \cite[Theorem 3.3]{Bye00} checked that 
if $[\mathcal{O}_{L_m}:\bZ[\alpha]]=1$, i.e. $m^2+3m+9=\mathfrak{f}_m$, 
then %$h_m=3$ holds only for the following integers of $m\geq -1$; 
there exist exactly $9=5+4$ integers $m\geq -1$ such that $h_m=3$:\\
{\rm (1)} If $m\not\equiv 0\ ({\rm mod}\ 3)$ and $\mathfrak{f}_m=p_1p_2$ 
where $p_1$, $p_2\equiv 1\pmod{6}$ are distinct primes, 
then there exist exactly $5$ integers $m=13,14,19,20,22$ 
$(\mathfrak{f}_m=7\cdot 31, 13\cdot 19, 7\cdot 61, 7\cdot 67, 13\cdot 43)$ 
such that $h_m=3$;\\
{\rm (2)} If $m\equiv 0,6\ ({\rm mod}\ 9)$ and $\mathfrak{f}_m=9p$ 
where $p\equiv 1\pmod{6}$ is prime, 
then there exist exactly $4$ integers $m=6,9,15,18$ $(p=7,13,31,43)$ 
such that $h_m=3$. 
%\end{theorem}
%

Louboutin \cite{Lou02} generalizes 
results of Lettl \cite{Let86} and Byeon \cite{Bye00} 
(cf. Theorem \ref{thLettl}): 
\begin{theorem}[{Louboutin \cite[Lemma 3, see also Theorem 4]{Lou02}}]\label{thLou}
Let $m\in\bZ$ be an integer and $L_m=\bQ(\alpha)$ be the simplest cubic field 
with class number $h_m$, conductor $\mathfrak{f}_m$ and regulator $R_m$ 
where $\alpha$ is a root of $f_m(X)=X^3-mX^2-(m+3)X-1$. 
Let $\mathcal{O}_{L_m}$ be the ring of integers of $L_m$. 
Let $\zeta_{L_m}(s)$ be the Dedekind zeta function of $L_m$ and 
$L(s,\chi)$ %and $L(s,\overline{\chi})$ 
be the Dirichlet $L$-function associated to a nontrivial 
cubic Dirichlet character 
$\chi$ %and $\overline{\chi}$ 
modulo $\mathfrak{f}_m$. 
If $\mathfrak{f}_m>2\sqrt{3}\cdot 10^4$, then 
\begin{align*}
|L(1,\chi)|^2={\rm Res}_{s=1}(\zeta_{L_m})>\frac{1}{e\,\log\mathfrak{f}_m}.
\end{align*}
Hence, by the class number formula
\begin{align*}
h_m=\frac{\mathfrak{f}_m\,|L(1,\chi)|^2}{4R_m}
\end{align*}
and the upper bound of the regulator $R_m<(\log (m^2+3m+9))^2/4$, 
we get that if $\mathfrak{f}_m>2\sqrt{3}\cdot 10^4$, then 
\begin{align*}
h_m>\frac{\mathfrak{f}_m}{e\,(\log (m^2+3m+9))^2\log \mathfrak{f}_m}
\end{align*}
where $m^2+3m+9=\mathfrak{f}_m\,[\mathcal{O}_{L_m}:\bZ[\alpha]]$. 
In particular, for the fixed index $[\mathcal{O}_{L_m}:\bZ[\alpha]]$ 
and the fixed integer $h\geq 1$, 
there exist only finitely many integers $m\in\bZ$ such that $h_m=h$. 
For example, if $[\mathcal{O}_{L_m}:\bZ[\alpha]]=1$ $($resp. $3$, $27$$)$, 
i.e. $m^2+3m+9=\mathfrak{f}_m$ 
$($resp. $3\mathfrak{f}_m$, $27\mathfrak{f}_m$$)$, and $m\geq 217$ 
$($resp. $429$, $1600$$)$, 
i.e. $m^2+3m+9\geq 47749$ $($resp. $185337$, $2564809$$)$, then $h_m>14$. 
\end{theorem}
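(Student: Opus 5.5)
The proof combines the residue lower bound with the class number formula and Lettl's regulator bound; the rest is a numerical verification. First, I take the bound $|L(1,\chi)|^2=\mathrm{Res}_{s=1}(\zeta_{L_m})>1/(e\log\mathfrak{f}_m)$ for $\mathfrak{f}_m>2\sqrt{3}\cdot 10^4$ as the input from Louboutin's Lemma 3. The usual route to such a bound runs through the Dedekind zeta function of the cubic field. One uses an explicit estimate $\mathrm{Res}_{s=1}(\zeta_{L})\geq (1-\beta)\cdot(\text{explicit factor})$ whenever $\zeta_L(\beta)\leq 0$ for real $\beta\in(1/2,1)$. For abelian cubic fields with conductor large enough, $\zeta_L$ has no real zero in the relevant range. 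This is because $\zeta_L=\zeta\cdot L(s,\chi)L(s,\overline{\chi})$, and a real zero of $L(s,\chi)$ would also be a zero of $L(s,\overline{\chi})$, hence a double zero, which is excluded by Stark-type arguments. Choosing $\beta=1-1/\log\mathfrak{f}_m$ then yields the constant $1/e$. I treat this as the cited input, so the proof of the present statement reduces to the consequences.

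Next, I substitute into the class number formula $h_m=\mathfrak{f}_m|L(1,\chi)|^2/(4R_m)$ (valid since $d_m=\mathfrak{f}_m^2$, the field is totally real of degree $3$, and $w=2$). Lettl's bound $R_m<(\log(m^2+3m+9))^2/4$ from Theorem \ref{thLettl} gives $4R_m<(\log(m^2+3m+9))^2$, and hence
\begin{align*}
h_m>\frac{\mathfrak{f}_m}{e\,(\log(m^2+3m+9))^2\log\mathfrak{f}_m}
\end{align*}
for $\mathfrak{f}_m>2\sqrt{3}\cdot10^4$. Now fix the index $I=[\mathcal{O}_{L_m}:\bZ[\alpha]]$. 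Then $\mathfrak{f}_m=(m^2+3m+9)/I$, and the right-hand side behaves like $m^2/(I\,(\log m)^3)\to\infty$. So for fixed $I$ and $h$, only finitely many $m$ satisfy $h_m=h$, which is the finiteness claim.

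For the explicit examples, set $N=m^2+3m+9$ and $F(N)=(N/I)/(e(\log N)^2\log(N/I))$ with $I=1,3,27$. I first check that $F$ is increasing for $N$ beyond a small threshold, by differentiating in $\log N$. Then I evaluate at $m=217,429,1600$, i.e. $N=47749,185337,2564809$, and verify two things: that $N/I>2\sqrt{3}\cdot10^4\approx 34641$ (true: $47749$, $61779$, $94993$), and that $F(N)>14$. For instance, with $I=1$ and $N=47749$ we have $\log N\approx10.77$, so $F\approx47749/(2.718\cdot116\cdot10.77)\approx14.06>14$.

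The main obstacle is this tight numerics: the thresholds $217$, $429$, $1600$ are evidently chosen as the minimal values where the bound exceeds $14$. So the logarithms must be evaluated carefully, ideally with the margin checked at the threshold and monotonicity confirmed for all larger $m$.
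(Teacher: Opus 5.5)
Your proposal follows the same route as the paper. The paper also quotes Louboutin's residue bound and combines it with the class number formula $h_m=\mathfrak{f}_m|L(1,\chi)|^2/(4R_m)$, Lettl's regulator bound and a check at the thresholds; these thresholds are indeed minimal, and for index $27$ the bound at $m=1600$ is only about $14.0002$, so your care with rounding is warranted.

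One correction to your aside on the cited input. The hypothesis $\zeta_{L_m}(\beta)\le 0$ in Louboutin's lemma holds for every $\beta\in(0,1)$ simply because $\zeta_{L_m}(\beta)=\zeta(\beta)\,|L(\beta,\chi)|^2$ with $\zeta(\beta)<0$. So no zero-free region is needed. A Stark-type argument would in any case only exclude zeros within roughly $1/(8\log\mathfrak{f}_m)$ of $1$, which falls far short of $\beta=1-1/\log\mathfrak{f}_m$.
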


%%%%%%%%%%%%%%%%%%%%%%%%%%%%%%%%%%%%%%%%%%%%%%%
A celebrated result of Thue \cite{Thu09} claims 
that an equation $F(x,y)=\lambda$ 
where $F(X,Y)\in\bZ[X,Y]$ is an irreducible binary form of degree $d\geq 3$ 
and $\lambda\in\bZ\setminus\{0\}$ 
has only finitely many integral 
solutions $(x,y)\in\bZ^2$. 
Baker \cite{Bak68} proved that the equation $F(x,y)=\lambda$ 
can be solved effectively. 
Numerical methods for solving a Thue equation are developed by Tzanakis and 
de Weger \cite{TW89} and Bilu and Hanrot \cite{BH96}. 
Thomas \cite{Tho90} investigated %for the first time 
the family of cubic Thue equations 
$F_m(X,Y)=\pm 1$ where $m\in\bZ$ and 
\[
F_m(X,Y)=Y^3f_m(X/Y)=X^3-mX^2Y-(m+3)XY^2-Y^3.
\]
In the case where $\lambda=\pm 1$, Thomas \cite{Tho90} showed that for 
$m+1\geq 1.365\cdot 10^7$, the equation $F_m(x,y)=\pm 1$ has only the trivial 
solutions $(x,y)=(\pm 1,0)$, $(0,\mp 1)$, $(\mp 1,\pm 1)$ and for 
$0\leq m+1\leq 10^3$, nontrivial solutions exist only for $3$ integers 
$m=-1$, $0$, $2$. 
More precisely, 
there exist $6$ (resp. $3$, $3$) nontrivial solutions 
for $m=-1$ (resp. $m=0$, $m=2$). 
Mignotte \cite{Mig93} 
solved completely the equations $F_m(X,Y)=\pm 1$ with 
the aid of a result of Thomas \cite{Tho90}. 
In particular,  for $m\geq -1$, nontrivial solutions 
exist only for $m=-1$, $0$, $2$. 
Mignotte, Peth\"o and Lemmermeyer \cite{MPL96} 
studied the equation $F_m(X,Y)=\lambda$ for general $\lambda\in\bZ$ 
and gave complete solutions to Thue inequality 
$|F_m(X,Y)|\leq 2m+3$ 
with the aid of a result of Lemmermeyer and Peth\"o \cite{LP95}. 
In particular, %for $m\geq -1$, 
all the solutions are the trivial ones 
$(x,y)=(c,0)$, $(0,-c)$, $(-c,c)$ to $F_m(x,y)=c^3$ and 
nontrivial $6$ solutions 
$(x,y)=(-1,-1)$, $(-1,2)$, $(2,-1)$, $(-m-1,-1)$, $(-1,m+2)$, $(m+2,-m-1)$ 
to $F_m(x,y)=2m+3$ for any $m\in\bZ$ and $6$ extra solutions 
$(x,y)=(3,1)$, $(1,-4)$, $(-4,3)$, $(8,3)$, $(3,-11)$, $(-11,8)$ to 
$F_1(x,y)=2m+3=5$ with $m=1$. 
Hoshi \cite{Hos11} determined solutions to the families of cubic 
Thue equations $F_m(X,Y)=\lambda$  
where $m\in\bZ$ and $\lambda$ is a divisor of $m^2+3m+9$ 
using a solution to the field isomorphism problem 
for the simplest cubic fields as follows 
(see also Lettl, Peth\"o, Voutier \cite{LPV99}, 
Hoshi \cite{Hos14}, \cite{Hos12} for the quartic and the sextic cases): 
\begin{theorem}[{Hoshi \cite[Theorem 1.1]{Hos11}}]\label{thH1}
Let $m\in\bZ$ be an integer and $L_m=\bQ(\alpha)$ be the simplest cubic field 
where $\alpha$ is a root of $f_m(X)=X^3-mX^2-(m+3)X-1$. 
There exists $n\in\bZ\setminus\{m,-m-3\}$ such that $L_n=L_m$ 
if and only if there exists a solution $(x,y)\in\bZ^2$ 
with $xy(x+y)\neq 0$ to the family of cubic Thue equations
\begin{align*}
F_m(x,y)=x^3-mx^2y-(m+3)xy^2-y^3=\lambda
\end{align*}
where $\lambda>0$ is a divisor of $m^2+3m+9$. 
Moreover, an integer $n$ and solutions $(x,y)\in\bZ^2$ to 
$F_m(x,y)=\lambda$ can be chosen to satisfy
\begin{align*}
N=m+\frac{(m^2+3m+9)xy(x+y)}{F_m(x,y)}
\end{align*}
where either $N=n$ or $N=-n-3$ and this occurs for only one of $n$ and $-n-3$. 
\end{theorem}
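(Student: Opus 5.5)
The plan is to reduce the field isomorphism problem $L_n=L_m$ to a Thue equation for $F_m$, using the fact that the Galois action on the roots of every $f_t$ is given by one and the same M\"obius transformation.

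\emph{Invariant function.} From $f_t(\alpha)=0$ we get $t(\alpha^2+\alpha)=\alpha^3-3\alpha-1$. Hence
\[
t=R(\alpha):=\frac{\alpha^3-3\alpha-1}{\alpha^2+\alpha}.
\]
The rational function $R(X)$ is invariant under the order-$3$ M\"obius map $s:X\mapsto -1/(X+1)$. So $f_n$ has a root in $L_m$ exactly when some $\beta\in L_m\setminus\bQ$ satisfies $R(\beta)=n$. Since $L_m/\bQ$ is Galois, this is equivalent to $L_n=L_m$.

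\emph{Normalizing the Galois action on $\beta$.} For such a $\beta$ we have $\sigma(\beta)\in\{s(\beta),s^{-1}(\beta)\}$. The symmetry $f_n(X)=-X^3f_{-n-3}(1/X)$ replaces $\beta$ by $1/\beta$, which conjugates $s$ into $s^{-1}$ and $n$ into $-n-3$. This is the source of the dichotomy ``$N=n$ or $N=-n-3$, and only one of them.'' After this normalization we may assume $\sigma(\beta)=s(\beta)$.

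\emph{Step 1: $\beta$ is a ratio of linear forms in $\alpha$.}
\begin{itemize}
\item Let $M\in GL_2(\bQ)$ represent $s$. The condition $\sigma(\beta)=s(\beta)$ says that the vector $(\beta,1)^T$ is $\sigma$-equivariant up to a scalar. This defines a $1$-cocycle of $\langle\sigma\rangle$ with values in $GL_2(L_m)$.
\item Since $H^1(\mathrm{Gal}(L_m/\bQ),GL_2(L_m))$ is trivial, the cocycle is a coboundary. The vector $(\alpha,1)^T$ already satisfies the same transformation law.
\item Consequently $\beta=A(\alpha)$ for some $A\in PGL_2(\bQ)$ commuting with $s$. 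The centralizer of $s$ in $PGL_2(\bQ)$ is a torus, namely the image of $\bQ[M]^\times$.
\item So, after clearing denominators, $\beta=(a\alpha+b)/(c\alpha+d)$ with $(a,b,c,d)$ affine-linear in a primitive pair $(x,y)\in\bZ^2$.
\end{itemize}

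\emph{Step 2: compute $n$.} Substitute this $\beta$ into $R(\beta)$ and use $\alpha^3=m\alpha^2+(m+3)\alpha+1$. The denominators combine into the norm of a linear form in $\alpha$, which is $F_m(x,y)$ up to sign and orientation. The numerator should produce
\[
N=m+\frac{(m^2+3m+9)\,xy(x+y)}{F_m(x,y)}.
\]
Here $xy(x+y)=0$ corresponds exactly to $A$ being a power of $s$, i.e.\ to $N\in\{m,-m-3\}$. This computation is explicit algebra; I expect it to be the cleanest verification.

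\emph{Step 3: integrality gives the divisor condition.} This is the main obstacle. One direction is immediate: if $F_m(x,y)=\lambda$ with $\lambda\mid m^2+3m+9$ and $xy(x+y)\ne 0$, then $N$ is an integer different from $m$ and $-m-3$, and $L_N=L_m$. For the converse, $N$ must be an integer. We may take $\gcd(x,y)=1$, and we need to show that one can arrange $F_m(x,y)\mid m^2+3m+9$, not merely $F_m(x,y)\mid (m^2+3m+9)xy(x+y)$. The route I would try first uses the Thue-form identities: the discriminant of $F_m$ is $(m^2+3m+9)^2$, and there are classical syzygies between $F_m$, its Hessian (a multiple of $(m^2+3m+9)$ times a quadratic form) and $xy(x+y)$. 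From these I would show that any prime power dividing both $F_m(x,y)$ and $xy(x+y)$ (with $x,y$ coprime) is impossible, since at such a prime one of $x,y,x+y$ vanishes and $F_m$ evaluates to $\pm$ a cube of the other coordinate. Hence $\gcd(F_m(x,y),xy(x+y))=1$, which forces $F_m(x,y)\mid m^2+3m+9$. Finally, replace $(x,y)$ by $(-x,-y)$ if necessary to make $\lambda>0$; this is allowed because $F_m$ is odd and the expression for $N$ is unchanged.
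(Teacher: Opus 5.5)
Your proposal is correct, but it takes a different route from the one behind this statement. The paper gives no proof of its own; it quotes \cite[Theorem 1.1]{Hos11}. There, the criterion ``$L_n=L_m$ if and only if $n$ or $-n-3$ equals $m+(m^2+3m+9)u(u+1)/f_m(u)$ for some $u\in\bQ\cup\{\infty\}$'' is imported from the Hoshi--Miyake solution of the field isomorphism problem for $f_m$, obtained via Tschirnhausen transformations and resolvents. Only afterwards is $u=x/y$ homogenized and the fact $\gcd(F_m(x,y),xy(x+y))=1$ for coprime $x,y$ applied; your Step 3 is exactly that last step.

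What you do differently is derive the criterion itself. Since $\sigma$ acts on a root of \emph{every} $f_t$ through the same M\"obius map $s$, the elements $\beta\in L_m$ with $\sigma(\beta)=s(\beta)$ form a principal homogeneous space under the rational centralizer $\bQ[M]^\times/\bQ^\times$ of $s$. This makes the argument self-contained. It also explains why the parameter is a point $(x:y)\in\mathbb{P}^1(\bQ)$, and why exactly one of $n,-n-3$ is produced: the orientations $s$ and $s^{-1}$ are interchanged by $\beta\mapsto 1/\beta$. The resolvent method, in exchange, is a general tool for field isomorphism problems of generic polynomials and does not depend on the M\"obius structure of this family.

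Three points need tightening.

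\emph{Step 1.} With $v=(\alpha,1)^T$ and $w=(\beta,1)^T$ you have $\sigma(v)=\frac{1}{\alpha+1}Mv$ and $\sigma(w)=\frac{1}{\beta+1}Mw$. The scalars differ, so ``the same transformation law'' plus $H^1(\mathrm{Gal}(L_m/\bQ),GL_2(L_m))=1$ does not by itself give a rational $A$. A cohomology-free fix works:
\begin{itemize}
\item $v,Mv$ is an $L_m$-basis of $L_m^2$, since the determinant is $\alpha^2+\alpha+1\neq0$.
\item Write $w=av+bMv$ and compare $\sigma$-images using $M^2=M-I$. This gives $\sigma(a)/a=\sigma(b)/b=(\alpha+1)/(\beta+1)$.
\item Hence $b/a\in\bQ$ (or $a=0$), i.e.\ $\beta=(xI+yM)\cdot\alpha$.
\end{itemize}

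\emph{Step 2.} Take $M=\begin{pmatrix}0&-1\\1&1\end{pmatrix}$ and $\beta=\frac{x\alpha-y}{y\alpha+x+y}$. The denominator is $N_{L_m/\bQ}(y\alpha+x+y)=-F_m(y,x)$, and one gets $N=m+(m^2+3m+9)xy(x+y)/F_m(y,x)$. So you must use $A=yI+xM$ to obtain the stated formula.

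\emph{Excluding $N=-m-3$.} The condition $xy(x+y)\neq0$ only gives $N\neq m$. To exclude $N=-m-3$ you need orientation: each root $1/\alpha_j$ of $f_{-m-3}$ satisfies $\sigma(1/\alpha_j)=s^{-1}(1/\alpha_j)$. With both exclusions, $n\in\{N,-N-3\}$ lies outside $\{m,-m-3\}$ in the ``if'' direction.
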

\begin{theorem}[{Hoshi \cite[Theorem 1.5]{Hos11}}]\label{thH2}
Let $F_m(X,Y)=Y^3f_m(X/Y)=X^3-mX^2Y-(m+3)XY^2-Y^3$. 
For $m\geq -1$, there exist exactly $66$ solutions 
$(x,y)\in\bZ^2$ with $xy(x+y)\neq 0$ to the family of Thue equations 
$F_m(x,y)=\lambda$ where $\lambda>0$ is a divisor of $m^2+3m+9$. 
The $66$ solutions are given as in \cite[Table 1]{Hos11}. 
\end{theorem}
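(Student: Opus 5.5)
The statement is Theorem \ref{thH2}: for $m\geq -1$ there are exactly $66$ solutions $(x,y)$ with $xy(x+y)\neq 0$ to $F_m(x,y)=\lambda$, $\lambda>0$, $\lambda\mid m^2+3m+9$. My approach is to reduce the infinite family to finitely many $m$ by an analytic bound, and then finish with explicit Thue computations. The reduction goes through the correspondence of Theorem \ref{thH1}. A nontrivial solution yields $N=m+(m^2+3m+9)xy(x+y)/F_m(x,y)$ with $L_N=L_m$. Moreover $N\neq m,-m-3$ because $xy(x+y)\neq 0$.

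\textbf{Step 1: bound the size of a solution.} Use the Galois action $\sigma$: the three roots of $f_m$ are $\alpha$, $-1/(\alpha+1)$, $-(\alpha+1)/\alpha$. For $m$ large they are approximately $m+1$, $-1/(m+2)$ and $-1-1/m$.

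Write
\begin{align*}
F_m(x,y)=(x-\alpha_1y)(x-\alpha_2y)(x-\alpha_3y).
\end{align*}
Since $0<\lambda\leq m^2+3m+9$, which is much smaller than the generic size of $|F_m|$, one linear factor must be very small. Following Thomas and Lemmermeyer--Peth\H{o}, this forces $x/y$ to be extremely close to one root. Equivalently, up to the symmetries $(x,y)\mapsto(y,-x-y)$ induced by $\sigma$, the quantity $|F_m(x,y)|$ is $\gg m^2$ unless $(x,y)$ lies in a short explicit list.

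A cleaner route is Mignotte--Peth\H{o}--Lemmermeyer \cite{MPL96}. It solves $|F_m|\leq 2m+3$ completely. So one only needs to treat $2m+3<\lambda\leq m^2+3m+9$ with $\lambda\mid m^2+3m+9$.

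\textbf{Step 2: use the field isomorphism.} For solutions with $2m+3<\lambda$, work through $N$ directly. $L_N=L_m$ forces $\mathfrak f_N=\mathfrak f_m$, and $N^2+3N+9$ and $m^2+3m+9$ differ only by the index factor of Corollary \ref{corG}. So $N^2+3N+9=(m^2+3m+9)\cdot u/v$ with $u,v$ built from cubes and powers of $3$.

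Next, relate $\alpha_N$ to $\alpha_m$. Here $\alpha_N$ is an element of $\mathcal O_{L_m}$ of norm $1$ whose conjugates satisfy the same $\sigma$-relation. Writing $\alpha_N$ in the unit basis $\langle -1,\alpha,\alpha+1\rangle$ up to a bounded index then gives a unit equation. Linear forms in logarithms (Baker \cite{Bak68}) bound $m$ effectively, say $m<10^{C}$. Reduction via Baker--Davenport / LLL in the style of Tzanakis--de Weger \cite{TW89} lowers this to a modest explicit bound $m\leq M_0$.

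\textbf{Step 3: finite check.} For each $-1\leq m\leq M_0$ and each divisor $\lambda$ of $m^2+3m+9$, solve $F_m(x,y)=\lambda$ with PARI/GP's \texttt{thue}. Collect the solutions with $xy(x+y)\neq 0$ and count them, expecting $66$ up to tabulation as in \cite[Table 1]{Hos11}.

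The main obstacle is Step 2. One needs a uniform, effective bound on $m$ valid for every divisor $\lambda$, and the number of divisors is unbounded. I would first try to show that a nontrivial solution forces $x/y$ to be a good rational approximation to a root, of quality at least $|y|^{-3}(m^2+3m+9)$. Combined with the Thomas-type gap principle, this should bound $|y|$ polynomially in $m$. The resulting parametric families would then be handled symbolically, for instance the $\lambda=2m+3$ solutions and their analogues, leaving only finitely many sporadic $m$.
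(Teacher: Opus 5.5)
\textbf{There is a genuine gap: Step 2, the bound on $m$, is never actually supplied.} Your overall architecture matches the paper's: reduce to finitely many $m$, then solve finitely many Thue equations by computer. But that reduction is the entire difficulty, and the route you sketch for it does not work as stated.

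First, the unit index is not controlled. Corollary \ref{corG} gives $\mathcal{O}_{L_m}^\times=\langle -1,\alpha,\alpha+1\rangle$ only in the monogenic case. In general the index of $\langle -1,\alpha,\alpha+1\rangle$ has no uniform bound, and in the coincidences you are trying to detect it can be large. For example, $L_{1259}=L_{-1}$ has regulator about $0.53$, whereas $\langle\alpha_{1259},\alpha_{1259}+1\rangle$ has regulator about $(\log 1260)^2\approx 51$, so its index is about $97$.

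Second, even granting a unit equation, Baker's theorem in a field that varies with $m$ bounds the exponents, hence $|x|$ and $|y|$, in terms of $m$. It does not bound $m$ itself. For that you would need a Thomas-style parametric analysis in which the relevant units, and the non-unit factors of norm $\lambda$, are known explicitly as functions of $m$. Here they are not.

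Your last paragraph has the same defect. A bound on $|y|$ polynomial in $m$ gives finiteness for each fixed $m$, not finiteness of the set of $m$. ``Leaving only finitely many sporadic $m$'' is precisely what has to be proved.

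Two smaller points. Step 1 is harmless but buys little, since $2m+3\mid m^2+3m+9$ forces $2m+3\mid 27$, i.e.\ $m\in\{-1,0,3,12\}$. Also, the index in Corollary \ref{corG} involves $p^{e-1}$ (e.g.\ $49/\mathfrak{f}_5=7$), so your $u,v$ are not built only from cubes and powers of $3$.

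\textbf{The missing ingredient is Theorem \ref{thH3}:} if $L_m=L_n$ with $-1\le m<n$, then $m\le 35731$. This comes from Okazaki's gap principle for cubic Thue equations of positive discriminant, not from linear forms in logarithms. The paper's proof combines it with Theorem \ref{thH1} as follows.

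A solution with $xy(x+y)\neq 0$ for $F_m$ gives, after replacing $N$ by $-N-3$ if necessary, some $n\ge -1$ with $n\neq m$ and $L_n=L_m$. Hence $\min(m,n)\le 35731$. So solving $F_m(x,y)=\lambda$ for all $-1\le m\le 35731$ and all $\lambda>0$ dividing $m^2+3m+9$ detects every coincidence, with the larger partner showing up as $N$. No bound on the larger member is ever needed.

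This computation yields exactly the $11$ pairs in $S$. Each pair contributes $2\cdot 3$ solutions: the $2$ from $m\leftrightarrow n$ and the $3$ from $(x,y)\mapsto(y,-x-y)$. That gives $66$.
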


Ennola \cite[Table 2.1]{Enn91} verified that 
for integers $-1\leq m<n\leq 10^4$, 
the field coincidence $L_m=L_n$ of the simplest cubic fields occurs 
if and only if $(m,n)\in S=\{(-1,5)$, $(-1,12)$, $(-1,1259)$, $(5,12)$, $(5,1259)$, 
$(12,1259)\}$ $\cup$ $\{(0,3)$, $(0,54)$, $(3,54)\}$ $\cup$ $\{(1,66)\}$ $\cup$ $\{(2,2389)\}$ with $|S|=11$. 
Hoshi and Miyake \cite[Example 5.3]{HM09} 
checked with the aid of computer that this claim is also valid for 
$-1\leq m<n\leq 10^5$. 

Okazaki \cite{Oka02} investigated Thue equations $F(X,Y)=1$ for irreducible cubic forms $F$ 
with positive discriminant ${\rm disc}(F)>0$ and established a very strong result on gaps between solutions. 
Using results in Okazaki \cite{Oka02}, 
Okazaki \cite{Oka} announced the following theorems and 
actually full proofs of them are given by Hoshi \cite{Hos11}: 
\begin{theorem}[{Okazaki \cite{Oka}, Hoshi \cite[Theorem 1.3]{Hos11}}]\label{thH3}
For integers $-1\leq m<n$, if 
the field coincidence $L_m=L_n$ of the simplest cubic fields occurs, 
then $m\leq 35731$. 
\end{theorem}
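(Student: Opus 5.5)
By Theorem \ref{thH1}, a coincidence $L_m=L_n$ with $-1\leq m<n$ (and $n\neq -m-3$, automatic since $n>m\geq -1$ gives $-m-3\leq -2<n$) yields a solution $(x,y)\in\bZ^2$ with $xy(x+y)\neq 0$ to
\begin{align*}
F_m(x,y)=\lambda,\qquad 0<\lambda\mid m^2+3m+9 .
\end{align*}
So it suffices to show that for $m>35731$ every solution of $F_m(x,y)=\lambda$ with $\lambda\mid m^2+3m+9$ is trivial. I will not use the general Thue machinery. Instead I will reduce to a near-unit equation and apply Okazaki's gap principle \cite{Oka02}.

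\textbf{Step 1: reduce to a norm-one problem.} If $(x,y)$ is a solution, set $\beta=x-\alpha y\in\bZ[\alpha]$, so that $N(\beta)=F_m(x,y)=\lambda$ divides $m^2+3m+9=\mathfrak{f}_m[\mathcal O_{L_m}:\bZ[\alpha]]$. The primes dividing $m^2+3m+9$ are totally ramified in $L_m$ or divide the index. Hence the ideal $(\beta)$ should be, up to a bounded factor, a power of the product of ramified primes. One then uses $\sigma$ to form $\beta^{\sigma}/\beta$ or $\beta^2/\beta^{\sigma}$, as in \cite{Hos11}, and produces from $(x,y)$ a related binary cubic form $G$ with $\operatorname{disc}(G)>0$ and $G(u,v)=1$. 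Here $G$ is the form attached to $L_n$ in the basis coming from $\alpha$, and its coefficients are controlled by $m$ and $n$. Concretely, the isomorphism $L_m\cong L_n$ sends $\alpha_n$ to a $\bQ$-rational expression in $\alpha_m$ of degree $\le 2$. This gives a cubic form equivalent to $F_n$ or $F_m$ that carries a nontrivial unit-type solution.

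\textbf{Step 2: apply Okazaki's gap principle.} \cite{Oka02} shows that for an irreducible cubic form of positive discriminant $D$, two distinct solutions of $F=1$ that are both "large" satisfy a strong multiplicative gap, roughly $|y_2|\gg |y_1|^{c}D^{c'}$. It also bounds the number of solutions beyond a small range. We already have the three trivial solutions $(1,0),(0,-1),(-1,1)$, which are permuted by $\sigma$, together with the nontrivial one from Step 1. The extra solution lies near a root of $f_m$, and $|y|$ is bounded below in terms of $m$ because $\alpha$ is close to $m+1,\ -1/(m+1),\ -1-\ldots$. Combining this with Baker-type or Okazaki upper bounds, in terms of $D=(m^2+3m+9)^2$, then gives an explicit inequality that fails once $m$ exceeds an explicit threshold.

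\textbf{Step 3: optimize the threshold.} Making every constant in Step 2 explicit yields $m\le 35731$.

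The main obstacle is Step 2, namely extracting fully explicit constants from \cite{Oka02} that are sharp enough to reach $35731$, and handling the index factors $3$ and $27$ uniformly in Step 1. For Step 2 I would first try the approximation argument: write $x/y-\alpha^{(i)}=O(\lambda/(|y|^3 m^2))$ and use that the roots of $f_m$ are separated by about $m$. This bounds $|y|$ from above by a power of $m$, while Okazaki gives a lower bound growing faster, and comparing the two produces the threshold.
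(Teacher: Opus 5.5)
There is a genuine gap, and the most concrete problem is the reduction in your first paragraph. You drop the hypothesis $m<n$ and aim to show that for $m>35731$ the equations $F_m(x,y)=\lambda$, $0<\lambda\mid m^2+3m+9$, have no solutions with $xy(x+y)\neq 0$.

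By Theorem \ref{thH1}, that says $L_m$ coincides with no other simplest cubic field, including fields $L_{n'}$ with $-1\le n'<m$. So it bounds the \emph{larger} member of a coincidence pair, which is more than Theorem \ref{thH3} asserts. In the paper that stronger fact (Theorems \ref{thH2}, \ref{thH4}) is deduced \emph{from} Theorem \ref{thH3} together with a treatment of the finitely many $m\le 35731$.

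The hypothesis $m<n$ is exactly what makes the Thue solution large. The formula for $N$ in Theorem \ref{thH1}, combined with the polynomial identity
\begin{align*}
F_m(x,y)^2+(2m+3)\,xy(x+y)\,F_m(x,y)+(m^2+3m+9)\,x^2y^2(x+y)^2=(x^2+xy+y^2)^3,
\end{align*}
gives $\lambda^2(n^2+3n+9)=(m^2+3m+9)(x^2+xy+y^2)^3$. Hence $x^2+xy+y^2>\lambda^{2/3}$ when $n>m$, but $x^2+xy+y^2<\lambda^{2/3}$ when the partner is smaller. In the latter case $x/y$ need not be near any root of $f_m$.

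For example, $L_{66}=L_1$ is witnessed by $F_{66}(-5,-2)=4563=66^2+3\cdot 66+9$ (here $N=-4$). Yet $x/y=5/2$ is far from the roots of $f_{66}$, which lie near $67$, $0$ and $-1$. So your Step 2 assertion that the extra solution lies near a root with $|y|$ bounded below in terms of $m$ is false in the generality your reduction requires. Such small solutions are invisible to a gap principle: excluding them for all $m>35731$ amounts to bounding the larger member of every pair.

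Apart from this, Steps 1--3 do not contain an argument.

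\emph{Step 1.} The index $[\mathcal{O}_{L_m}:\bZ[\alpha]]$ is not bounded: it is $61^3$ for $m=1259$ and $27\cdot 193^3$ for $m=506370$. So ``up to a bounded factor'' fails. You also never construct the form $G$, nor exhibit two solutions of $G=1$ to which Okazaki's gap principle is to be applied.

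\emph{Step 2.} An approximation inequality $|x/y-\alpha^{(i)}|=O(\lambda/(|y|^3m^2))$ gives no upper bound on $|y|$ at all. Such a bound needs an effective irrationality measure, which is the whole difficulty. Moreover, two of the three roots of $f_m$ (those near $0$ and $-1$) are at distance about $1$, not $m$. The appeal to ``Baker-type or Okazaki upper bounds'' is never made specific.

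\emph{Step 3.} This simply asserts the number $35731$.

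In the cited proof (Okazaki \cite{Oka}, Hoshi \cite{Hos11}) the bound comes from Okazaki's explicit results on cubic Thue equations of positive discriminant \cite{Oka02}, applied in the situation $m<n$. The precise statement used and the explicit estimate that produces $35731$ are exactly what your proposal leaves out.
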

\begin{theorem}[{Okazaki \cite{Oka}, Hoshi \cite[Theorem 1.4]{Hos11}}]\label{thH4}
For integers $-1\leq m<n$, if 
the field coincidence $L_m=L_n$ of the simplest cubic fields occurs, 
then 
$m,n\in\{-1$, $0$, $1$, $2$, $3$, $5$, $12$, $54$, $66$, $1259$, $2389\}$. 
Moreover, we have 
\begin{align*}
&L_{-1}=L_5=L_{12}=L_{1259}& &\hspace*{-33mm}{\rm with\ conductor}\ \mathfrak{f}_{-1}=\mathfrak{f}_5=\mathfrak{f}_{12}=\mathfrak{f}_{1259}=7,\\
&L_0=L_3=L_{54}& &\hspace*{-33mm}{\rm with\ conductor}\ \mathfrak{f}_0=\mathfrak{f}_3=\mathfrak{f}_{54}=9,\\
&L_1=L_{66}& &\hspace*{-33mm}{\rm with\ conductor}\ \mathfrak{f}_1=\mathfrak{f}_{66}=13,\\
&L_2=L_{2389}& &\hspace*{-33mm}{\rm with\ conductor}\ \mathfrak{f}_2=\mathfrak{f}_{2389}=19. 
\end{align*}
\end{theorem}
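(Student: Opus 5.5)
The plan is to reduce Theorem \ref{thH4} to a finite search using Theorem \ref{thH1} and Theorem \ref{thH2}, with Theorem \ref{thH3} as a safety check. First suppose $-1\leq m<n$ and $L_m=L_n$. Since $n\neq m$ and both are at least $-1$, we have $n\neq -m-3$, because $-m-3\leq -2$. So Theorem \ref{thH1} applies. It produces a solution $(x,y)\in\bZ^2$ with $xy(x+y)\neq 0$ to $F_m(x,y)=\lambda$, where $\lambda>0$ divides $m^2+3m+9$. It also gives
\begin{align*}
N=m+\frac{(m^2+3m+9)xy(x+y)}{F_m(x,y)}\in\{n,-n-3\}.
\end{align*}
By Theorem \ref{thH2}, there are exactly $66$ such triples $(m,x,y)$ with $m\geq -1$, listed in \cite[Table 1]{Hos11}. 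Consequently $m$ must be one of the values of $m$ occurring in that table, and $n$ must be obtained from $N$ by replacing $N$ with $-N-3$ whenever $N\leq -2$.

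The second step is to run through the $66$ solutions and compute $N$ for each. Collecting the resulting pairs $\{m,n\}$ should give exactly the pairs in Ennola's set $S$, so all the indices lie in $\{-1,0,1,2,3,5,12,54,66,1259,2389\}$. The same pairs also come out of Ennola \cite{Enn91} and Hoshi--Miyake \cite{HM09} for $n\leq 10^5$. Theorem \ref{thH3} gives $m\leq 35731$, which is consistent with this and serves as an independent cross-check on the table. Grouping the pairs by transitivity gives the four equivalence classes $\{-1,5,12,1259\}$, $\{0,3,54\}$, $\{1,66\}$ and $\{2,2389\}$.

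The third step computes the conductors using Theorem \ref{thG}. For example, $m=-1$ gives $m^2+3m+9=7$, so $\mathfrak{f}=7$. For $m=0$, $m^2+3m+9=9$ and $m\equiv 0\pmod 3$ with $m\not\equiv 12\pmod{27}$, so $\gamma=9$ and $\mathfrak{f}=9$. For $m=1$, $m^2+3m+9=13$, and for $m=2$ it is $19$. Equality of fields forces equal conductors across each class. Still, for safety I would verify a few directly: $m=12$ gives $189=27\cdot 7$; $m=54$ gives $3087=9\cdot 7^3$, with $b=9$ so $\mathfrak{f}=9$; $m=66$ gives $4563=27\cdot 13^2$.

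The main obstacle is Theorem \ref{thH2} itself, the complete resolution of the Thue equations $F_m(x,y)=\lambda$ for all divisors $\lambda$ and all $m$. In the present setting that result is assumed from \cite{Hos11}, whose proof rests on Okazaki's gap principle behind Theorem \ref{thH3}. Given it, the remaining work is bookkeeping over the finite table.
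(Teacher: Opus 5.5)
Your proposal is correct and follows the route the paper itself indicates: Theorem \ref{thH4} is deduced from Theorem \ref{thH1} together with the complete list of $66$ solutions in Theorem \ref{thH2}, which correspond ($66=2\cdot 3\cdot 11$) to the $11$ pairs in $S$, and the conductors are then read off from Theorem \ref{thG}. One small point: Theorem \ref{thH3} is not an independent cross-check, since it is itself an input to Hoshi's proof of Theorem \ref{thH2}, but this does not affect your argument.
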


Note that for $m=-1$ (resp. $5$, $12$, $1259$, $0$, $3$, $54$, $1$, $66$, $2$, $2389$), 
we have $m^2+3m+9=7$ (resp. $7^2$, $3^3 7$, $7\cdot 61^3$, $3^2$, $3^3$, $3^2 7^3$, $13$, 
$3^3 13^2$, $19$, $19\cdot 67^3$) (see also Table $1$). 
For example, it follows from Theorem \ref{thH4} that\\
(i) the equation $x^2+3x+9=3^r$ 
has only the integral solutions $(x,r)=(0,2)$, $(3,3)$, $(-3,2)$, $(-6,3)$;\\
(ii) for primes $p=m^2+3m+9\equiv 1\pmod{6}$, 
e.g. $p=7$, $13$, $19$, $31$, $37$, $43$, $61$, $67$, $73$, $79$, $97$, 
the equation $x^2+3x+9=p^r$ $(r\geq 2)$ has only the integral solutions 
$(x,p,r)=(5,7,2)$, $(-8,7,2)$.  

Originally, Okazaki \cite{Oka} seems to get Theorem \ref{thH4} by using 
Theorem \ref{thH3} directly. 
However, Hoshi \cite{Hos11} first showed Theorem \ref{thH2} by using 
Theorem \ref{thH1} and Theorem \ref{thH3}. 
Then Theorem \ref{thH4} can be obtained 
as a consequence of Theorem \ref{thH1} and Theorem \ref{thH2}. 
Conversely, if we assume Theorem \ref{thH4}, then by applying Theorem \ref{thH1} to fixed $m$ and $n$, we can get Theorem \ref{thH2}. 
Note that the $66$ solutions as in Theorem \ref{thH2} 
correspond to the $11$ field coincidences $L_m=L_n$ for $(m,n)\in S$ 
with $|S|=11$ as above 
%$L_{-1}=L_5$, $L_{-1}=L_{12}$, $L_{-1}=L_{1259}$, 
%$L_5=L_{12}$, $L_5=L_{1259}$, $L_{12}=L_{1259}$, 
%$L_0=L_3$, $L_0=L_{54}$, $L_3=L_{54}$, 
%$L_1=L_{66}$, $L_2=L_{2389}$ of the simplest cubic fields $L_m$ 
by Theorem \ref{thH1} ($66=2\cdot 3\cdot 11$ and the first 
$2$ comes from $m\leftrightarrow n$ and the second 
$3$ comes from the Galois action $(x,y)\mapsto (y,-x-y)$ of order $3$).

The main results of this paper are given as follows: 
%\newpage 
\begin{theorem}\label{thmain1}
Let $m\geq -1$ be an integer and $L_m=\bQ(\alpha)$ 
be the simplest cubic field 
with class number $h_m$ and conductor $\mathfrak{f}_m$ 
where $\alpha$ is a root of $f_m(X)=X^3-mX^2-(m+3)X-1$. 
Let $\mathcal{O}_{L_m}$ be the ring of integers of $L_m$. 
Let $p\equiv 1\pmod{6}$ be prime and $s=p_1\cdots p_r$ 
$(r\geq 2)$ be a product of distinct primes 
$p_i\equiv 1\pmod{6}$ $(1\leq i\leq r)$.  
If $[\mathcal{O}_{L_m}:\bZ[\alpha]]=1$ $($resp. $3$, $27$$)$, 
then there exist exactly 
$581=1+149+308+56+67$ $($resp. $80=1+34+45$, $142=45+97$$)$ integers 
$m\geq -1$ such that $h_m\leq 1000$. 
These integers $m\geq -1$ are given explicitly 
as in Section \ref{S3} $($cf. Theorem \ref{thLet86}, 
a paragraph after Remark \ref{r1.7}, Table $1$ and Table $2$ 
for the cases with class number $h_m=1$ and $h_m=3$$)$:\\
{\rm (1)} When $[\mathcal{O}_{L_m}:\bZ[\alpha]]=1$, i.e. 
$m^2+3m+9=\mathfrak{f}_m$.\\
{\rm (i)} If $m=0$, then we have $L_0$ with $\mathfrak{f}_0=9$ and $h_0=1$.\\ 
{\rm (ii)} If $m\not\equiv 0$ $({\rm mod}\ 3)$ and $\mathfrak{f}_m=p$, 
then $h_m\equiv 1\pmod{3}$ and there exist exactly $149$ integers 
$m\geq -1$ such that $h_m\leq 1000$. 
In particular, there exist exactly $7$ %$($resp. $5$, $6$, $1$$)$ 
integers $m=-1,1,2,4,7,8,10$ 
%$($resp. $m=11,17,23,25,29$, $m=16,28,32,38,43,49$, $m=31$$)$ 
such that $h_m=1$;\\ %$($resp. $4$, $7$, $13$$)$;\\
{\rm (iii)} If $m\not\equiv 0$ $({\rm mod}\ 3)$ and $\mathfrak{f}_m=s=p_1\cdots p_r$ $(r\geq 2)$, 
then $h_m\equiv 0\pmod{3^{r-1}}$ and there exist exactly $308$ integers $m\geq -1$ such that
$h_m\leq 1000$. 
In particular, there exist exactly $5$ integers $m=13,14,19,20,22$ 
$(\mathfrak{f}_m=7\cdot 31, 13\cdot 19, 7\cdot 61, 7\cdot 67, 13\cdot 43)$ 
such that $h_m=3$;\\
{\rm (iv)} If $m\equiv 0$, $6\pmod{9}$ and $\mathfrak{f}_m=9p$, 
then $h_m\equiv 0\pmod{3}$ and there exist exactly $56$ integers $m\geq -1$ such that $h_m\leq 1000$. 
In particular, there exist exactly $4$ integers $m=6,9,15,18$ $(p=7,13,31,43)$ 
such that $h_m=3$;\\
{\rm (v)} If $m\equiv 0$, $6\pmod{9}$ and $\mathfrak{f}_m=9s=9p_1\cdots p_r$ $(r\geq 2)$, 
then $h_m\equiv 0\pmod{3^r}$ and there exist exactly $67$ integers $m\geq -1$ such that $h_m\leq 1000$.\\
{\rm (2)} When $[\mathcal{O}_{L_m}:\bZ[\alpha]]=3$, i.e. 
$m^2+3m+9=3\mathfrak{f}_m$.\\
{\rm (i)} If $m=3$, then we have $L_3$ with $\mathfrak{f}_3=9$ and $h_3=1$.\\ 
{\rm (ii)} If $m\equiv 3$, $21\pmod{27}$ and $\mathfrak{f}_m=9p$, 
then $h_m\equiv 0\pmod{3}$ and there exist exactly $34$ integers $m\geq -1$ such that $h_m\leq 1000$. 
In particular, there exist exactly $2$ integers $m=21,30$ 
$(p=19,37)$ such that $h_m=3$;\\
{\rm (iii)} If $m\equiv 3$, $21\pmod{27}$ and $\mathfrak{f}_m=9s=9p_1\cdots p_r$ $(r\geq 2)$, 
then $h_m\equiv 0\pmod{3^r}$ and there exist exactly $45$ integers $m\geq -1$ such that $h_m\leq 1000$.\\
{\rm (3)} When $[\mathcal{O}_{L_m}:\bZ[\alpha]]=27$, i.e. 
$m^2+3m+9=27\mathfrak{f}_m$.\\
{\rm (i)} If $m\equiv 12\pmod{27}$ and $\mathfrak{f}_m=p$, 
 then $h_m\equiv 1\pmod{3}$ and there exist exactly $45$ integers $m\geq -1$ such that $h_m\leq 1000$. 
In particular, there exist exactly 
$3$ %$($resp. $2$, $2$, $1$, $2$, $1$, $1$$)$ 
integers $m=12$, $39$, $93$ 
%$($resp. $m=120,228$, $m=255,309$, $m=498$, $m=336,822$, $m=795$, $m=471$$)$ 
such that $h_m=1$;\\ %$($resp. $4$, $7$, $13$, $28$, $31$, $37$$)$;\\
{\rm (ii)} If $m\equiv 12\pmod{27}$ and $\mathfrak{f}_m=s=p_1\cdots p_r$ $(r\geq 2)$, 
then $h_m\equiv 0\pmod{3^{r-1}}$ and there exist exactly $97$ integers $m\geq -1$ such that $h_m\leq 1000$. 
In particular, there exist exactly $2$ integers $m=147,174$ 
$(\mathfrak{f}_m=19\cdot 43, 7\cdot 163)$ such that $h_m=3$. 
\end{theorem}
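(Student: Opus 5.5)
The plan is to reduce the statement to a finite computation: an effective lower bound for $h_m$ forces $m$ into an explicit finite range, and every $m$ in that range is then checked by machine. Throughout, we write $I=[\mathcal{O}_{L_m}:\bZ[\alpha]]\in\{1,3,27\}$, so that $m^2+3m+9=I\,\mathfrak{f}_m$ by Corollary \ref{corG}.

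\textbf{Step 1: an explicit cutoff.} Assume $\mathfrak{f}_m>2\sqrt{3}\cdot 10^4$. Theorem \ref{thLou} gives
\begin{align*}
h_m>\frac{\mathfrak{f}_m}{e\,(\log (I\mathfrak{f}_m))^2\log \mathfrak{f}_m}.
\end{align*}
The right-hand side is increasing in $\mathfrak{f}_m$ in the relevant range. For each $I\in\{1,3,27\}$ we therefore compute the least $F_I$ such that the bound exceeds $1000$ whenever $\mathfrak{f}_m\geq F_I$; this is a routine numerical solve, and we expect $F_I$ of order $10^7$--$10^8$. This yields an explicit $M_I$ with $m^2+3m+9<I F_I$ whenever $h_m\leq 1000$ and $m\geq -1$. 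The finitely many small conductors $\mathfrak{f}_m\leq 2\sqrt{3}\cdot 10^4$ fall inside this range automatically.

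\textbf{Step 2: enumeration by index and case.} For $-1\leq m\leq M_I$, we select those $m$ with the prescribed index using the criteria of Corollary \ref{corG}(1)--(3). These are conditions on $m$ modulo $27$ together with squarefreeness of $m^2+3m+9$, of $(m^2+3m+9)/9$, or of $(m^2+3m+9)/27$. We then split each index class into the subcases (i)--(v) according to the factorization of $\mathfrak{f}_m$ and the value of $\gamma$ in Theorem \ref{thG}. The congruence assertions in each subcase are not computational: $h_m\equiv 1\pmod 3$ for prime conductor, and $3^{r-1}\mid h_m$ or $3^{r}\mid h_m$ (conductor $s$ or $9s$ respectively) for composite conductor, all follow directly from Theorem \ref{thG2}. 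They can also serve as consistency checks on the computed data.

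\textbf{Step 3: certified class number computation.} For each surviving $m$, we compute $h_m$ with PARI/GP (\texttt{bnfinit}) and keep those with $h_m\leq 1000$, then tally them to obtain $581=1+149+308+56+67$, $80=1+34+45$ and $142=45+97$, and read off the sublists with $h_m=1$ and $h_m=3$. These sublists should be compared with Theorem \ref{thLet86} and Byeon's list as a sanity check. Field coincidences $L_m=L_n$ count as separate integers $m$; Theorem \ref{thH4} shows they occur only for the listed small $m$, so they do not disturb the count.

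\textbf{The main obstacle} is making Step 3 unconditional. \texttt{bnfinit} assumes GRH, so each result must be certified, via \texttt{bnfcertify}, for every $m$ up to $M_I$; this is expensive when $\mathfrak{f}_m$ is near $10^7$. To make the certification cheap, we would first try to exploit the known units $\alpha,\alpha+1$. When $I=1$ they generate $\mathcal{O}_{L_m}^\times$ by Corollary \ref{corG}, and in general they give an upper bound on $R_m$ of the required shape. Combined with the analytic class number formula, this gives a bound on $h_m$ from below that excludes most candidates without computing class groups. Full certification would then be needed only for the fields actually listed, which is consistent with the abstract's claim that the listed fields are certified unconditionally. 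If the cutoff $M_I$ is too large for this to be feasible, we would sharpen Step 1, for instance by computing $|L(1,\chi)|^2$ numerically via Euler products to discard candidates early.
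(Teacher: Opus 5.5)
Your Steps 1--3 are the paper's proof. Theorem \ref{thLou} gives the cutoffs $m\le 3422$, $6417$, $22165$ for index $1$, $3$, $27$, so $F_I$ is about $1.2$--$1.8\cdot 10^{7}$, as you expected. The index classes and subcases are selected by exactly the conditions of Corollary \ref{corG}, and the congruences on $h_m$ come from Theorem \ref{thG2}. Every class number is computed with \texttt{bnfinit} and certified with \texttt{bnfcertify}.

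The one divergence is the shortcut in your last paragraph, and it does not work as stated. The word ``exactly'' requires an unconditional proof that every \emph{unlisted} $m$ in range has $h_m>1000$, so certifying only the listed fields is not enough.

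Your substitute for the unlisted fields does not cover them in general. Put $k=[\mathcal{O}_{L_m}^\times:\langle -1,\alpha,\alpha+1\rangle]$, so that $R(\alpha,\alpha+1)=kR_m$. Using $R(\alpha,\alpha+1)$ in the class number formula then yields only $h_m/k$. That lower bound can sit below $1000$ even when $h_m>1000$. And $k>1$ does occur for index $27$: for $m=12$ one has $L_{12}=L_{-1}$ and $k=13$. Moreover, truncated Euler products give no usable unconditional error bound for $|L(1,\chi)|^2$, so discarding candidates that way would itself rest on GRH. For index $1$, where $k=1$ by Corollary \ref{corG}, a rigorous finite-sum evaluation of $L(1,\chi)$ would determine $h_m$ exactly, but that is not what you propose.

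The paper avoids all of this. Its loops call \texttt{bnfcertify} on every $m$ in range satisfying the index condition, before testing $h_m\le 1000$, and it returned $1$ in every case. With conductors only around $10^7$, this full certification is entirely feasible. Finally, the abstract's remark that the listed fields are certified refers to Theorem \ref{thmain2}. That is precisely why completeness there is claimed only under GRH, and why it does not support certifying only the listed fields here.
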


\begin{theorem}\label{thmain2}
Let $-1\leq m\leq 10^7$ be an integer and $L_m=\bQ(\alpha)$ 
be the simplest cubic field 
with class number $h_m$ %and conductor $\mathfrak{f}_m$ 
where $\alpha$ is a root of $f_m(X)=X^3-mX^2-(m+3)X-1$. 
Then $h_m<16$ holds for $138=26+31+11+10+36+21+3$ 
integers $m$ with $-1\leq m\leq 10^7$. 
More precisely, 
there exist $26$ $($resp. $31$, $11$, $10$, $36$, $21$, $3$$)$ 
integers $m$ with $-1\leq m\leq 10^7$ such that $h_m=1$ 
$($resp. $3$, $4$, $7$, $9$, $12$, $13$$)$ 
as in Table $1$ $($resp. $2$, $3$, $4$, $5$, $6$, $7$$)$. 
Moreover, under the GRH $($Generalized Riemann Hypothesis$)$, 
$h_m<16$ holds only for such $138$ integers $m$ 
with $-1\leq m\leq 10^7$. 
\end{theorem}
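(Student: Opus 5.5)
The statement to prove is Theorem \ref{thmain2}: a finite but large computation over $-1\leq m\leq 10^7$. My plan is to reduce the search space arithmetically before any class group is computed.

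\textbf{Arithmetic sieve.} By Shanks' theorem, $h_m<16$ forces $h_m\in\{1,3,4,7,9,12,13\}$. By Theorem \ref{thG2}, $3^{r-1}\mid h_m$, where $r$ is the number of primes $p\equiv 1\pmod 6$ dividing $\mathfrak{f}_m$. So $h_m<16$ forces $r\leq 2$ when $\mathfrak{f}_m=p_1\cdots p_r$. In the case $\mathfrak{f}_m=9p_1\cdots p_{r-1}$, the same divisibility gives that $\mathfrak{f}_m$ has at most two prime factors $\geq 5$; the bound $9\mid h_m$ when $r-1\geq 2$ is still allowed, since $h_m=9$ or $12$ is possible. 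For each $m$ in the range I will do the following.

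\begin{enumerate}[(a)]
\item Factor $m^2+3m+9$ and compute $\mathfrak{f}_m$ via Theorem \ref{thG}.
\item Discard $m$ if the number of prime factors of $\mathfrak{f}_m$ forces $9\cdot 3\mid h_m$ or more, i.e.\ $h_m\geq 27$.
\item Compute the index $[\mathcal{O}_{L_m}:\bZ[\alpha]]=(m^2+3m+9)/\mathfrak{f}_m$ from Corollary \ref{corG}.
\end{enumerate}

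\textbf{Analytic sieve.} The key pruning step uses Louboutin's bound, Theorem \ref{thLou}:
\[
h_m>\mathfrak{f}_m/(e(\log(m^2+3m+9))^2\log\mathfrak{f}_m)\quad\text{for }\mathfrak{f}_m>2\sqrt{3}\cdot 10^4.
\]
This lets me discard every $m$ whose conductor is large relative to $\log(m^2+3m+9)$. Since $m^2+3m+9\leq 10^{14}+\dots$, the threshold is $\mathfrak{f}_m\gtrsim 16\,e\,(\log 10^{14})^2\log\mathfrak{f}_m$, roughly $3\cdot 10^5$–$10^6$. The surviving $m$ are exactly those with a large index, i.e.\ $m^2+3m+9$ divisible by a large cube or square times a small conductor.

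These survivors I enumerate directly. I will write $m^2+3m+9=\mathfrak{f}\cdot t$ with $\mathfrak{f}$ below the threshold and $t$ the index. Equivalently, $(2m+3)^2+27=4\mathfrak{f}t$, so for each admissible small $\mathfrak{f}$ I list the $m\leq 10^7$ with $\mathfrak{f}\mid m^2+3m+9$ and the corresponding conductor equal to $\mathfrak{f}$. This is a cheap residue-class scan.

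\textbf{Certification and GRH.} For the survivors I will compute $h_m$ and $\Cl(L_m)$ with PARI/GP's \texttt{bnfinit}, which is correct under GRH. This yields the list of $138$ values of $m$ in Tables 1--7 and the claimed completeness under GRH.

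For the unconditional part (that the listed $m$ really have the stated class groups), I will run \texttt{bnfcertify} on each of the $138$ fields. This is feasible because the conductors are small. Fields that coincide, as in Theorem \ref{thH4}, share the same certified value; for instance $h_{1259}=h_{-1}=1$ and $h_{2389}=h_2=1$. The remaining $h_m$ values are consistent with Theorem \ref{thLet86} and Byeon's list.

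\textbf{Main obstacle.} The main difficulty is the volume of $10^7$ fields. The sieve must discard the vast majority without computing class groups. What remains is the middle regime, where $\mathfrak{f}_m$ is below Louboutin's threshold but the index is moderately large, so that $m$ is large while $\mathfrak{f}_m$ is small.

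My first attempt is to precompute, for each small conductor $\mathfrak{f}\leq 10^6$, the square roots of $-27$ modulo $4\mathfrak{f}$ and step through $m$. Where $\mathfrak{f}_m$ is not tiny, I would replace the unconditional bound by computing $L(1,\chi)$ directly, since $h_m\geq \mathfrak{f}_m|L(1,\chi)|^2/(4R_m)$ with $R_m$ bounded above by Lettl's bound. If too many candidates survive, the GRH-dependent \texttt{bnfinit} step absorbs them, which is why completeness is claimed only under GRH.
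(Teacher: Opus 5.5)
Your plan is sound, but it takes a different route from the paper.

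\textbf{What the paper does.} The paper does no sieving for this theorem. It runs \texttt{bnfinit}, which is correct under GRH, on every polynomial $f_m$ with $-1\le m\le 10^7$. It keeps those $m$ with $h_m<16$ and runs \texttt{bnfcertify} on these $138$ fields. Theorem~\ref{thLou} is used only in the proof of Theorem~\ref{thmain1}.

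\textbf{What you do instead.} You apply Theorem~\ref{thLou} with the uniform bound $\log(m^2+3m+9)\le\log(10^{14}+3\cdot 10^{7}+9)\approx 32.24$. This gives $h_m\ge 16$ unconditionally as soon as $\mathfrak{f}_m$ exceeds about $5.6\cdot 10^5$. You compute $\mathfrak{f}_m$ unconditionally from the factorization of $m^2+3m+9$ via Theorem~\ref{thG}. What survives is the small $m$ together with those $m$ for which $m^2+3m+9$ has a large square or cube part, a set that is tiny compared with $10^7$.

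\textbf{Comparison.} Your route greatly reduces the computation. More importantly, it confines the GRH dependence to the survivors. Their conductors are far below those the paper already certifies in Section~\ref{S3} (up to about $1.8\cdot 10^7$). So running \texttt{bnfcertify} on all survivors would make the completeness claim unconditional, which is stronger than the theorem as stated. The paper's brute-force route buys simplicity: it needs no analytic input and no conductor bookkeeping, and it yields class group data for every $m$ in the range.

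\textbf{One slip to fix.} Your sentence saying that $h_m<16$ forces $r\le 2$ when $\mathfrak{f}_m=p_1\cdots p_r$ is wrong. Since $h_m=9$ is allowed, $3^{r-1}\mid h_m$ only forces $r\le 3$, and $r=3$ does occur. For example, $\mathfrak{f}_{40}=7\cdot 13\cdot 19$ with $h_{40}=9$, and likewise $m=139,152,1119,1658,1769,6816,1360624$ in Table~$5$. Applying ``$r\le 2$'' literally would lose $8$ of the $36$ fields with $h_m=9$. Your step (b) discards $m$ only when $27\mid h_m$ is forced, i.e.\ $r\ge 4$ in both conductor shapes. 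That is the correct criterion, and it is the one you should use.
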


\begin{remark}\label{rem12}
(1) We get the following table as a summary of 
Theorem \ref{thmain2} (Tables $1$--$7$):\vspace*{2mm}
\begin{center}
\begin{tabular}{l|rrrrrrr}
$|\Cl(L_m)|=h_m<16$ & 1 & 3 & 4 & 7 & 9 & 12 & 13\\\hline
$\#\{-1\leq m\leq 10^7\mid |\Cl(L_m)|=h_m\}$ & 
26 & 31& 11 & 10 & 36 & 21 & 3\\
{\rm The largest} $m\leq 10^7=10000000$ & 506370 & 1376233 & 6440111 & 612049 & 
1360624 & 14349 & 36435
\end{tabular}\vspace*{2mm}
\end{center}
(2) By Theorem \ref{thG2}, if class number $h_m=1$, then 
conductor $\mathfrak f_m=9$ or 
$\mathfrak f_m=p$ where $p\equiv 1\pmod{6}$ is prime. 
Moreover, in view of Theorem \ref{thH4}, the corresponding $26$ integers 
$m\geq -1$ as in Table $1$ give, up to $\mathbb Q$-isomorphism,  
$19$ simplest cubic fields $L_m$ 
with class number $h_m=1$  
(see also Theorem \ref{thG} and Corollary \ref{corG} for the conductor 
$\mathfrak{f}_m$ of $L_m$ 
and Table $1$ for the prime factorization of $m^2+3m+9$):\vspace*{2mm}
\begin{center}
\begin{tabular}{l|rrrrrrrrrr}
$m\ {\rm with}\ h_m=1$ & $-1,5,12,1259$ & $0,3,54$ & $1, 66$ & $2, 2389$ & 
$4$ & $7$ & $8$ & $10$ & $39$ & $93$ \\\hline
$\mathfrak{f}_m$ & 7 & 9 & 13 & 19 & 37 & 79 & 97 & 
139 & 61 & 331\vspace*{2mm}\\\
%%%%%
$m\ {\rm with}\ h_m=1$ & $286$ & $397$ & $911$ & $1283$ & 
$1598$ & $7837$ & $12745$ & $263135$ & $506370$\\\hline
$\mathfrak{f}_m$ & 241 & 463 & 379 & 751 & 373 & 
1213 & 1381 & 2539 & 1321\vspace*{2mm} 
\end{tabular}
\end{center}
\end{remark}

These observations, together with Theorem \ref{thmain2}, 
give rise to the following conjecture: 

\begin{conjecture}\label{conj1}
The $26$ integers $m$ as in Table $1$ are the only integers $m\geq-1$ 
with class number $h_m=1$. 
Namely, in view of Theorem \ref{thH4}, up to $\mathbb Q$-isomorphism, 
the $19$ cubic fields $L_m$ with conductor 
\begin{align*}
\mathfrak{f}_m=7,\, 9,\, 13,\, 19,\, 37,\, 61,\, 79,\, 97,\, 139,\, 
241,\, 331,\, 373,\, 379,\, 463,\, 751,\, 1213,\, 1321,\, 1381,\, 2539
\end{align*}
as in Remark \ref{rem12} (2) are exactly the simplest cubic fields $L_m$ $(m\in\bZ)$ 
with class number $h_m=1$. 
\end{conjecture}

\begin{remark}
Although Conjecture \ref{conj1} claims only the class number one case, 
the computations in Theorem \ref{thmain2} suggest that analogous 
statements may also hold for $h_m=3$, $4$, $7$, $9$, $12$, $13$. 
More precisely, it would be interesting to determine whether, 
for $h\in\{3$, $4$, $7$, $9$, $12$, $13\}$, 
Tables $2$--$7$ contain all integers $m\geq -1$ with $h_m=h$. 
\end{remark}

\begin{acknowledgment}
We would like to thank St\'ephane Louboutin 
for helpful explanations concerning Theorem \ref{thLou} 
which improved the proof of Theorem \ref{thmain1}. 
We also thank him for informing us about the 
related and interesting paper Louboutin \cite[Section 3]{Lou20}. 
\end{acknowledgment}

\newpage
{
\begin{multicols}{2}
\begin{center}
Table $1$: $-1\leq m\leq 10^7$ with class number $|\Cl(L_m)|=1$
\begin{tabular}{rll}
$m$ & $m\pmod{3,9,27}$ & $m^2+3m+9$\\\hline
$-1$ & $\not\equiv 0\pmod{3}$ & $7$\\
$0$ & $\equiv 0\pmod{9}$ & $9=3^2$\\ %& $9=3^2$\\
$1$ & $\not\equiv 0\pmod{3}$ & $13$\\
$2$ & $\not\equiv 0\pmod{3}$ & $19$\\
$3$ & $\equiv 3\pmod{27}$ & $27=3^3$\\ %& $27=3^3$\\
$4$ & $\not\equiv 0\pmod{3}$ & $37$\\
$5$ & $\not\equiv 0\pmod{3}$ & $7^2$\\ %& $49=7^2$\\
$7$ & $\not\equiv 0\pmod{3}$ & $79$\\
$8$ & $\not\equiv 0\pmod{3}$ & $97$\\
$10$ & $\not\equiv 0\pmod{3}$ & $139$\\
$12$ & $\equiv 12\pmod{27}$ & $3^3 7$\\ %& $189=3^3 7$\\
$39$ & $\equiv 12\pmod{27}$ & $3^3 61$\\ %& $1647=3^3 61$\\
$54$ & $\equiv 0\pmod{9}$ & $3^2 7^3$\\ %& $3087=3^2 7^3$\\
$66$ & $\equiv 12\pmod{27}$ & $3^3 13^2$\\ %& $4563=3^3 13^2$\\
$93$ & $\equiv 12\pmod{27}$ & $3^3 331$\\ %& $8937=3^3 331$\\
$286$ & $\not\equiv 0\pmod{3}$ & $7^3 241$\\ %& $82663=7^3 241$\\
$397$ & $\not\equiv 0\pmod{3}$ & $7^3 463$\\ %& $158809=7^3 463$\\
$911$ & $\not\equiv 0\pmod{3}$ & $13^3 379$\\ %& $832663=13^3 379$\\
$1259$ & $\not\equiv 0\pmod{3}$ & $7\cdot 61^3$\\ %& $1588867=7\cdot 61^3$\\
$1283$ & $\not\equiv 0\pmod{3}$ & $13^3 751$\\ %& $1649947=13^3 751$\\
$1598$ & $\not\equiv 0\pmod{3}$ & $19^3 373$\\ %& $2558407=19^3 373$\\
$2389$ & $\not\equiv 0\pmod{3}$ & $19\cdot 67^3$\\ %& $5714497=19\cdot 67^3$\\
$7837$ & $\not\equiv 0\pmod{3}$ & $37^3 1213$\\ %& $61442089=37^3 1213$\\
$12745$ & $\not\equiv 0\pmod{3}$ & $7^6 1381$\\ %& $162473269=7^6 1381$\\
$263135$ & $\not\equiv 0\pmod{3}$ & $7^3 43^3 2539$\\ %& $69240817639=7^3 43^3 2539$\\
$506370$ & $\equiv 12\pmod{27}$ & $3^3 193^3 1321$ %& $256412096019=3^3 193^3 1321$
\end{tabular}\\
\end{center}

\vfill\null
\columnbreak
\begin{center}
Table $2$: $-1\leq m\leq 10^7$ with class number $|\Cl(L_m)|=3$
\end{center}
\begin{center}
\begin{tabular}{rll}
$m$ & $m\pmod{3,9,27}$ & $m^2+3m+9$\\\hline
$6$ & $\equiv 6\pmod{9}$ & $3^2 7$\\ %& $63=3^2 7$\\
$9$ & $\equiv 0\pmod{9}$ & $3^2 13$\\ %& $117=3^2 13$\\
$13$ & $\not\equiv 0\pmod{3}$ & $7\cdot 31$\\ %& $217=7\cdot 31$\\
$14$ & $\not\equiv 0\pmod{3}$ & $13\cdot 19$\\ %& $247=13\cdot 19$\\
$15$ & $\equiv 6\pmod{9}$ & $3^2 31$\\ %& $279=3^2 31$\\
$18$ & $\equiv 0\pmod{9}$ & $3^2 43$\\ %& $387=3^2 43$\\
$19$ & $\not\equiv 0\pmod{3}$ & $7\cdot 61$\\ %& $427=7\cdot 61$\\
$20$ & $\not\equiv 0\pmod{3}$ & $7\cdot 67$\\ %& $469=7\cdot 67$\\
$21$ & $\equiv 21\pmod{27}$ & $3^3 19$\\ %& $513=3^3 19$\\
$22$ & $\not\equiv 0\pmod{3}$ & $13\cdot 43$\\ %& $559=13\cdot 43$\\
$30$ & $\equiv 3\pmod{27}$ & $3^3  37$\\ %& $999=3^3  37$\\
$41$ & $\not\equiv 0\pmod{3}$ & $7^2 37$\\ %& $1813=7^2 37$\\
$100$ & $\not\equiv 0\pmod{3}$ & $13^2 61$\\ %& $10309=13^2 61$\\
$147$ & $\equiv 12\pmod{27}$ & $3^3 19\cdot 43$\\ %& $22059=3^3 19\cdot 43$\\
$154$ & $\not\equiv 0\pmod{3}$ & $19^2 67$\\ %& $24187=19^2 67$\\
$174$ & $\equiv 12\pmod{27}$ & $3^3 7\cdot 163$\\ %& $30807=3^3 7\cdot 163$\\
$201$ & $\equiv 12\pmod{27}$ & $3^3 7^2 31$\\ %& $41013=3^3 7^2 31$\\
$271$ & $\not\equiv 0\pmod{3}$ & $7\cdot 103^2$\\ %& $74263=7\cdot 103^2$\\
$398$ & $\not\equiv 0\pmod{3}$ & $7\cdot 151^2$\\ %& $159607=7\cdot 151^2$\\
$629$ & $\not\equiv 0\pmod{3}$ & $7^3 19\cdot 61$\\ %& $397537=7^3 19\cdot 61$\\
$740$ & $\not\equiv 0\pmod{3}$ & $7^4 229$\\ %& $549829=7^4 229$\\
$876$ & $\equiv 12\pmod{27}$ & $3^3 19^2 79$\\ %& $770013=3^3 19^2 79$\\
$939$ & $\equiv 21\pmod{27}$ & $3^3 181^2$\\ %& $884547=3^3 181^2$\\
$1083$ & $\equiv 3\pmod{27}$ & $3^3 7^3 127$\\ %& $1176147=3^3 7^3 127$\\
$1497$ & $\equiv 12\pmod{27}$ & $3^3 7\cdot 109^2$\\ %& $2245509=3^3 7\cdot 109^2$\\
$3108$ & $\equiv 3\pmod{27}$ & $3^3 13^3 163$\\ %& $9668997=3^3 13^3 163$\\
$5258$ & $\not\equiv 0\pmod{3}$ & $19^3 37\cdot 109$\\ %& $27662347=19^3 37\cdot 109$\\
$7502$ & $\not\equiv 0\pmod{3}$ & $7^2 13^3 523$\\ %& $56302519=7^2 13^3 523$\\
$10927$ & $\not\equiv 0\pmod{3}$ & $19\cdot 31^3 211$\\ %& $119432119=19\cdot 31^3 211$\\
$222550$ & $\not\equiv 0\pmod{3}$ & $7^6 73^2 79$\\ %& $49529170159=7^6 73^2 79$\\
$1376233$ & $\not\equiv 0\pmod{3}$ & $7\cdot 13^3 43^3 1549$\\ %& $1894021398997=7\cdot 13^3 43^3 1549$
\end{tabular}\\
\end{center}~\\

\begin{center}
Table $3$: $-1\leq m\leq 10^7$ with class number $|\Cl(L_m)|=4$
$(\Cl(L_m)\simeq C_2\times C_2)$
\end{center}
\begin{center}
\begin{tabular}{rll}
$m$ & $m\pmod{3,9,27}$ & $m^2+3m+9$\\\hline
$11$ & $\not\equiv 0\pmod{3}$ & $163$\\
$17$ & $\not\equiv 0\pmod{3}$ & $349$\\
$23$ & $\not\equiv 0\pmod{3}$ & $607$\\
$25$ & $\not\equiv 0\pmod{3}$ & $709$\\
$29$ & $\not\equiv 0\pmod{3}$ & $937$\\
$120$ & $\equiv 12\pmod{27}$ & $3^3 547$\\ %& $14769=3^3 547$\\
$228$ & $\equiv 12\pmod{27}$ & $3^3 1951$\\ %& $52677=3^3 1951$\\
$4170$ & $\equiv 12\pmod{27}$ & $3^3 7^3 1879$\\ %& $17401419=3^3 7^3 1879$\\
$5088$ & $\equiv 12\pmod{27}$ & $3^3 7^3 2797$\\ %& $25903017=3^3 7^3 2797$\\
$101471$ & $\not\equiv 0\pmod{3}$ & $7^3 5479^2$\\ %& $10296668263=7^3 5479^2$\\
$6440111$& $\not\equiv 0\pmod{3}$ & $7^3 229^3 10069$ %&41475049012663, [7^3 229^3 10069
\end{tabular}\\
\end{center}

\vfill\null
\columnbreak
\begin{center}
Table $4$: $-1\leq m\leq 10^7$ with class number $|\Cl(L_m)|=7$
\end{center}
\begin{center}
\begin{tabular}{rll}
$m$ & $m\pmod{3,9,27}$ & $m^2+3m+9$\\\hline
$16$ & $\not\equiv 0\pmod{3}$ & $313$\\
$28$ & $\not\equiv 0\pmod{3}$ & $877$\\
$32$ & $\not\equiv 0\pmod{3}$ & $1129$\\
$38$ & $\not\equiv 0\pmod{3}$ & $1567$\\
$43$ & $\not\equiv 0\pmod{3}$ & $1987$\\
$49$ & $\not\equiv 0\pmod{3}$ & $2557$\\
$255$ & $\equiv 12\pmod{27}$ & $3^3 2437$\\ %& $65799=3^3 2437$\\
$309$ & $\equiv 12\pmod{27}$ & $3^3 3571$\\ %& $96417=3^3 3571$\\
$24614$ & $\not\equiv 0\pmod{3}$ & $43^3 7621$\\ %& $605922847=43^3 7621$\\
$612049$ & $\not\equiv 0\pmod{3}$ & $13^3 19^3 24859$ %& $374605814557=13^3 19^3 24859$
\end{tabular}\\
\end{center}~\\

\begin{center}
Table $5$: $-1\leq m\leq 10^7$ with class number $|\Cl(L_m)|=9$
$(\Cl(L_m)\simeq C_3\times C_3)$
\end{center}
\begin{center}
\begin{tabular}{rll}
$m$ & $m\pmod{3,9,27}$ & $m^2+3m+9$\\\hline
$24$ & $\equiv 6\pmod{9}$ & $3^2 73$\\ %& $657=3^2 73$\\
$27$ & $\equiv 0\pmod{9}$ & $3^2 7\cdot 13$\\ %& $819=3^2 7\cdot 13$\\
$33$ & $\equiv 6\pmod{9}$ & $3^2 7\cdot 19$\\ %& $1197=3^2 7\cdot 19$\\
$34$ & $\not\equiv 0\pmod{3}$ & $7\cdot 181$\\ %& $1267=7\cdot 181$\\
$35$ & $\not\equiv 0\pmod{3}$ & $13\cdot 103$\\ %& $1339=13\cdot 103$\\
$40$ & $\not\equiv 0\pmod{3}$ & $7\cdot 13\cdot 19$\\ %& $1729=7\cdot 13\cdot 19$\\
$47$ & $\not\equiv 0\pmod{3}$ & $7\cdot 337$\\ %& $2359=7\cdot 337$\\
$48$ & $\equiv 21\pmod{27}$ & $3^3  7\cdot 13$\\ %& $2457=3^3  7\cdot 13$\\
$52$ & $\not\equiv 0\pmod{3}$ & $19\cdot 151$\\ %& $2869=19\cdot 151$\\
$53$ & $\not\equiv 0\pmod{3}$ & $13\cdot 229$\\ %& $2977=13\cdot 229$\\
$75$ & $\equiv 21\pmod{27}$ & $3^3 7\cdot 31$\\ %& $5859=3^3 7\cdot 31$\\
$84$ & $\equiv 3\pmod{27}$ & $3^3 271$\\ %& $7317=3^3 271$\\
$90$ & $\equiv 0\pmod{9}$ & $3^2 7^2 19$\\ %& $8379=3^2 7^2 19$\\
$103$ & $\not\equiv 0\pmod{3}$ & $7^2 223$\\ %& $10927=7^2 223$\\
$139$ & $\not\equiv 0\pmod{3}$ & $7^2 13\cdot 31$\\ %& $19747=7^2 13\cdot 31$\\
$152$ & $\not\equiv 0\pmod{3}$ & $7^2 13\cdot 37$\\ %& $23569=7^2 13\cdot 37$\\
$204$ & $\equiv 6\pmod{9}$ & $3^2 13\cdot19^2$\\ %& $42237=3^2 13\cdot19^2$\\
$237$ & $\equiv 21\pmod{27}$ & $3^3 7^2 43$\\ %& $56889=3^3 7^2 43$\\
$374$ & $\not\equiv 0\pmod{3}$ & $37^2 103$\\ %& $141007=37^2 103$\\
$390$ & $\equiv 12\pmod{27}$ & $3^3 7\cdot 811$\\ %& $153279=3^3 7\cdot 811$\\
$417$ & $\equiv 12\pmod{27}$ & $3^3 13\cdot 499$\\ %& $175149=3^3 13\cdot 499$\\
$972$ & $\equiv 0\pmod{9}$ & $3^2 7^3 307$\\ %& $947709=3^2 7^3 307$\\
$1119$ & $\equiv 12\pmod{27}$ & $3^3 7^2 13\cdot 73$\\ %& $1255527=3^3 7^2 13\cdot 73$\\
$1315$ & $\not\equiv 0\pmod{3}$ & $7^3 31\cdot 163$\\ %& $1733179=7^3 31\cdot 163$\\
$1658$ & $\not\equiv 0\pmod{3}$ & $7^4 31\cdot 37$\\ %& $2753947=7^4 31\cdot 37$\\
$1769$ & $\not\equiv 0\pmod{3}$ & $7^3 13\cdot 19\cdot 37$\\ %& $3134677=7^3 13\cdot 19\cdot 37$\\
$3480$ & $\equiv 6\pmod{9}$ & $3^2 13^3 613$\\ %& $12120849=3^2 13^3 613$\\
$4059$ & $\equiv 0\pmod{9}$ & $3^2 7^5 109$\\ %& $16487667=3^2 7^5 109$\\
$6816$ & $\equiv 12\pmod{27}$ & $3^3  7^2 19\cdot 43^2$\\ %& $46478313=3^3  7^2 19\cdot 43^2$\\
$8457$ & $\equiv 6\pmod{9}$ & $3^2 19^4 61$\\ %& $71546229=3^2 19^4 61$\\
$12117$ & $\equiv 21\pmod{27}$ & $3^3 13\cdot19^3 61$\\ %& $146858049=3^3 13\cdot19^3 61$\\
$70509$ & $\equiv 12\pmod{27}$ & $3^3  7\cdot 31^3 883$\\ %& $4971730617=3^3  7\cdot 31^3 883$\\
$91858$ & $\not\equiv 0\pmod{3}$ & $73^3 109\cdot 199$\\ %& $8438167747=73^3 109\cdot 199$\\
$100952$ & $\not\equiv 0\pmod{3}$ & $7\cdot 79^3 2953$\\ %& $10191609169=7\cdot 79^3 2953$\\
$266748$ & $\equiv 6\pmod{9}$ & $3^2 7^2 13^3 271^2$\\ %& $71155295757=3^2 7^2 13^3 271^2$\\
$1360624$ & $\not\equiv 0\pmod{3}$ & $7^4 43\cdot 61^3 79$ %& $1851301751257=7^4 43\cdot 61^3 79$
\end{tabular}\\
\end{center}

\vfill\null
\columnbreak
\begin{center}
Table $6$: $-1\leq m\leq 10^7$ with class number $|\Cl(L_m)|=12$
$(\Cl(L_m)\simeq C_6\times C_2)$
\end{center}
\begin{center}
\begin{tabular}{rll}
$m$ & $m\pmod{3,9,27}$ & $m^2+3m+9$\\\hline
$26$ & $\not\equiv 0\pmod{3}$ & $7\cdot 109$\\ %& $763=7\cdot 109$\\
$36$ & $\equiv 0\pmod{9}$ & $3^2 157$\\ %& $1413=3^2 157$\\
$42$ & $\equiv 6\pmod{9}$ & $3^2 211$\\ %& $1899=3^2 211$\\
$44$ & $\not\equiv 0\pmod{3}$ & $31\cdot 67$\\ %& $2077=31\cdot 67$\\
$45$ & $\equiv 0\pmod{9}$ & $3^2 241$\\ %& $2169=3^2 241$\\
$55$ & $\not\equiv 0\pmod{3}$ & $7\cdot 457$\\ %& $3199=7\cdot 457$\\
$57$ & $\equiv 3\pmod{27}$ & $3^3 127$\\ %& $3429=3^3 127$\\
$59$ & $\not\equiv 0\pmod{3}$ & $19\cdot193$\\ %& $3667=19\cdot193$\\
$67$ & $\not\equiv 0\pmod{3}$ & $37\cdot 127$\\ %& $4699=37\cdot 127$\\
$188$ & $\not\equiv 0\pmod{3}$ & $7^2 733$\\ %& $35917=7^2 733$\\
$235$ & $\not\equiv 0\pmod{3}$ & $13^2 331$\\ %& $55939=13^2 331$\\
$269$ & $\not\equiv 0\pmod{3}$ & $13^2 433$\\ %& $73177=13^2 433$\\
$577$ & $\not\equiv 0\pmod{3}$ & $43^2 181$\\ %& $334669=43^2 181$\\
$716$ & $\not\equiv 0\pmod{3}$ & $13\cdot 199^2$\\ %& $514813=13\cdot 199^2$\\
$844$ & $\not\equiv 0\pmod{3}$ & $37\cdot 139^2$\\ %& $714877=37\cdot 139^2$\\
$1426$ & $\not\equiv 0\pmod{3}$ & $7^3 13\cdot 457$\\ %& $2037763=7^3 13\cdot 457$\\
$2344$ & $\not\equiv 0\pmod{3}$ & $7^343\cdot 373$\\ %& $5501377=7^343\cdot 373$\\
$2361$ & $\equiv 12\pmod{27}$ & $3^3 37^2 151$\\ %& $5581413=3^3 37^2 151$\\
$5305$ & $\not\equiv 0\pmod{3}$ & $7\cdot 13^3 1831$\\ %& $28158949=7\cdot 13^3 1831$\\
$5677$ & $\not\equiv 0\pmod{3}$ & $13^4 1129$\\ %& $32245369=13^4 1129$\\
$14349$ & $\equiv 12\pmod{27}$ & $3^3 7^3 37\cdot 601$ %& $205936857=3^3 7^3 37\cdot 601$
\end{tabular}\\
\end{center}~\\

\begin{center}
Table $7$: $-1\leq m\leq 10^7$ with class number $|\Cl(L_m)|=13$
\end{center}
\begin{center}
\begin{tabular}{rll}
$m$ & $m\pmod{3,9,27}$ & $m^2+3m+9$\\\hline
$31$ & $\not\equiv 0\pmod{3}$ & $1063$\\
$498$ & $\equiv 12\pmod{27}$ & $3^3 9241$\\ %& $249507=3^3 9241$\\
$36435$ & $\equiv 12\pmod{27}$ & $3^3 13^3 22381$ %& $1327618539=3^3 13^3 22381$
\end{tabular}\\
\end{center}
\end{multicols}
}

%%%%%%%%%%%%%%%%%%%%%%%%%%%%%%%%%%%%%%%%%%%%%%%%%%
\section{{Proof of Theorem \ref{thmain1} and Theorem \ref{thmain2}}}\label{S2}

{\it Proof of Theorem \ref{thmain1}.}\\
\indent 
We can use Theorem \ref{thLettl} (Lettl \cite{Let86})
or its improvement Theorem \ref{thLou} (Louboutin \cite{Lou02}). 
Here we just apply Theorem \ref{thLou}. 

It follows from Theorem \ref{thLou} that 
if $[\mathcal{O}_{L_m}:\bZ[\alpha]]=1$ and $m\geq 3423$, %$m\geq 5763$, 
then $\mathfrak{f}_m>2\sqrt{3}\cdot 10^4$ %$\mathfrak{f}_m=m^2+3m+9>10^5$ 
and hence 
%$h_m>\frac{(\mathfrak{f}_m)^}{(\log (m^2+3m+9))^2}>1000.105$.
$h_m>\frac{\mathfrak{f}_m}{e\,(\log (m^2+3m+9))^2\log \mathfrak{f}_m}>1000.329$. 
This implies that if $[\mathcal{O}_{L_m}:\bZ[\alpha]]=1$ and 
$h_m\leq 1000$, then $m\leq 3422$. %$m\leq 5762$. 

Similarly, 
if $[\mathcal{O}_{L_m}:\bZ[\alpha]]=3$ and $m\geq 6418$, %$m\geq 10743$, 
then $\mathfrak{f}_m=(m^2+3m+9)/3>2\sqrt{3}\cdot 10^4$ 
%$\mathfrak{f}_m=(m^2+3m+9)/3>10^5$ 
and 
$h_m>\frac{\mathfrak{f}_m}{e\,(\log (m^2+3m+9))^2\log \mathfrak{f}_m}$ 
$>$ $1000.069$. 
%$h_m>0.023\,\frac{(\mathfrak{f}_m)^{0.946}}{(\log (m^2+3m+9))^2}>1000.042$. 
This implies that if $[\mathcal{O}_{L_m}:\bZ[\alpha]]=3$ and 
$h_m\leq 1000$, then $m\leq 6417$. %$m\leq 10742$. 

If $[\mathcal{O}_{L_m}:\bZ[\alpha]]=27$ and $m\geq 22166$, %$m\geq 36764$, 
then $\mathfrak{f}_m=(m^2+3m+9)/27>2\sqrt{3}\cdot 10^4$ 
%$\mathfrak{f}_m=(m^2+3m+9)/27>10^5$ 
and 
$h_m>\frac{\mathfrak{f}_m}{e\,(\log (m^2+3m+9))^2\log \mathfrak{f}_m}>1000.010$. 
%$h_m>0.023\,\frac{(\mathfrak{f}_m)^{0.946}}{(\log (m^2+3m+9))^2}>1000.033$.
This implies that if $[\mathcal{O}_{L_m}:\bZ[\alpha]]=27$ and 
$h_m\leq 1000$, then $m\leq 22165$. %$m\leq 36763$. 

By using PARI/GP \cite{PARI2}, 
we can compute the class number $h_m$ of the simplest cubic field $L_m$ 
via the command ${\tt K=bnfinit(x^3-m*x^2-(m+3)*x-1,1)}$ and ${\tt K.no}$ 
under the assumption of the Generalized Riemann Hypothesis (GRH). 
In our case where 
$[\mathcal{O}_{L_m}:\bZ[\alpha]]=1$ (resp. $3$, $27$) 
and $m\leq 3422$ (resp. $m\leq 6417$, $m\leq 22165$), 
we can use the command {\tt bnfcertify(K)} to confirm that the obtained 
result is correct without the GRH. 
Indeed, we confirmed that {\tt bnfcertify(K)} returned $1$ in every case. 
See the PARI/GP computations as in Section \ref{S3}.\\

{\it Proof of Theorem \ref{thmain2}.}\\
\indent 
By using PARI/GP \cite{PARI2}, for $-1\leq m\leq 10^7$, 
we can compute the class number $h_m$ of the simplest cubic field $L_m$ 
and the ideal class group $\Cl(L_m)$ of $L_m$ 
via the command ${\tt K=bnfinit(x^3-m*x^2-(m+3)*x-1)}$ and 
${\tt K.clgp}$ under the assumption of the GRH. 
In the cases where $h_m<16$, 
we can use the command 
{\tt bnfcertify(K)} to confirm that 
the obtained result is correct without the GRH. 
Indeed, we confirmed that {\tt bnfcertify(K)} returned $1$ 
in each of these cases. 
See the PARI/GP computations as in Section \ref{S4}. 

%%%%%%%%%%%%%%%%%%%%%%%%%%%%%%%%%%%%%%%%%%%%%
\section{PARI/GP computations: The simplest cubic fields $L_m=\bQ(\alpha)$ with class number $h_m=|\Cl(L_m)|\leq 1000$ and $[\mathcal{O}_{L_m}:\bZ[\alpha]]=1,3,27$}\label{S3}

{\footnotesize 
\begin{verbatim}
gp > {
L=List();
for(m=-1,3422,
 if(m%3!=0&&isprime(m^2+3*m+9)==1,
 K=bnfinit(x^3-m*x^2-(m+3)*x-1,1);
 if(bnfcertify(K)==1,,print([m,"UnderGRH"]));
 cn=K.no;
 if(cn<=1000,listput(~L,[m,m^2+3*m+9,K.clgp[1..2]])
 )
 )
);
M=List();
for(i=1,1000,listput(~M,0);M[i]=select(x->x[3][1]==i,L);
if(M[i]!=List([]),print([i,M[i],length(M[i])]))
)
}
[1, List([[-1, 7, [1, []]], [1, 13, [1, []]], [2, 19, [1, []]], [4, 37, [1, []]], [7, 79, [1, []]], 
  [8, 97, [1, []]], [10, 139, [1, []]]]), 7]
[4, List([[11, 163, [4, [2, 2]]], [17, 349, [4, [2, 2]]], [23, 607, [4, [2, 2]]], [25, 709, [4, [2, 2]]], 
  [29, 937, [4, [2, 2]]]]), 5]
[7, List([[16, 313, [7, [7]]], [28, 877, [7, [7]]], [32, 1129, [7, [7]]], [38, 1567, [7, [7]]], 
 [43, 1987, [7, [7]]], [49, 2557, [7, [7]]]]), 6]
[13, List([[31, 1063, [13, [13]]]]), 1]
[16, List([[64, 4297, [16, [4, 4]]]]), 1]
[19, List([[37, 1489, [19, [19]]], [50, 2659, [19, [19]]], [56, 3313, [19, [19]]], [58, 3547, [19, [19]]], 
 [73, 5557, [19, [19]]], [88, 8017, [19, [19]]]]), 6]
[28, List([[85, 7489, [28, [14, 2]]], [95, 9319, [28, [14, 2]]]]), 2]
[31, List([[70, 5119, [31, [31]]], [94, 9127, [31, [31]]], [98, 9907, [31, [31]]]]), 3]
[37, List([[112, 12889, [37, [37]]], [140, 20029, [37, [37]]]]), 2]
[43, List([[107, 11779, [43, [43]]]]), 1]
[49, List([[91, 8563, [49, [49]]], [134, 18367, [49, [49]]]]), 2]
[52, List([[127, 16519, [52, [26, 2]]], [130, 17299, [52, [26, 2]]], [133, 18097, [52, [26, 2]]]]), 3]
[61, List([[122, 15259, [61, [61]]], [158, 25447, [61, [61]]], [164, 27397, [61, [61]]], 
 [172, 30109, [61, [61]]], [175, 31159, [61, [61]]]]), 5]
[64, List([[101, 10513, [64, [8, 8]]], [143, 20887, [64, [4, 4, 2, 2]]]]), 2]
[67, List([[155, 24499, [67, [67]]], [197, 39409, [67, [67]]]]), 2]
[73, List([[182, 33679, [73, [73]]], [214, 46447, [73, [73]]]]), 2]
[76, List([[163, 27067, [76, [38, 2]]], [169, 29077, [76, [38, 2]]], [179, 32587, [76, [38, 2]]]]), 3]
[91, List([[205, 42649, [91, [91]]], [238, 57367, [91, [91]]]]), 2]
[100, List([[136, 18913, [100, [10, 10]]], [220, 49069, [100, [10, 10]]]]), 2]
[112, List([[142, 20599, [112, [28, 4]]], [176, 31513, [112, [28, 4]]], [218, 48187, [112, [28, 4]]]]), 3]
[121, List([[239, 57847, [121, [11, 11]]]]), 1]
[124, List([[284, 81517, [124, [62, 2]]]]), 1]
[127, List([[121, 15013, [127, [127]]], [260, 68389, [127, [127]]]]), 2]
[133, List([[200, 40609, [133, [133]]], [302, 92119, [133, [133]]]]), 2]
[139, List([[322, 104659, [139, [139]]]]), 1]
[148, List([[277, 77569, [148, [74, 2]]]]), 1]
[169, List([[212, 45589, [169, [169]]], [224, 50857, [169, [169]]]]), 2]
[172, List([[262, 69439, [172, [86, 2]]]]), 1]
[175, List([[254, 65287, [175, [35, 5]]]]), 1]
[193, List([[403, 163627, [193, [193]]]]), 1]
[208, List([[290, 84979, [208, [52, 4]]], [305, 93949, [208, [52, 4]]]]), 2]
[217, List([[259, 67867, [217, [217]]], [392, 154849, [217, [217]]]]), 2]
[223, List([[206, 43063, [223, [223]]]]), 1]
[229, List([[304, 93337, [229, [229]]]]), 1]
[244, List([[332, 111229, [244, [122, 2]]]]), 1]
[247, List([[368, 136537, [247, [247]]]]), 1]
[259, List([[364, 133597, [259, [259]]]]), 1]
[268, List([[472, 224209, [268, [134, 2]]]]), 1]
[277, List([[367, 135799, [277, [277]]], [410, 169339, [277, [277]]]]), 2]
[292, List([[359, 129967, [292, [146, 2]]]]), 1]
[304, List([[406, 166063, [304, [76, 4]]]]), 1]
[313, List([[463, 215767, [313, [313]]]]), 1]
[316, List([[395, 157219, [316, [158, 2]]], [533, 285697, [316, [158, 2]]]]), 2]
[325, List([[343, 118687, [325, [65, 5]]], [388, 151717, [325, [65, 5]]]]), 2]
[343, List([[266, 71563, [343, [49, 7]]]]), 1]
[349, List([[281, 79813, [349, [349]]]]), 1]
[364, List([[301, 91513, [364, [182, 2]]], [310, 97039, [364, [182, 2]]]]), 2]
[388, List([[442, 196699, [388, [194, 2]]], [514, 265747, [388, [194, 2]]]]), 2]
[397, List([[487, 238639, [397, [397]]]]), 1]
[400, List([[371, 138763, [400, [20, 20]]], [473, 225157, [400, [10, 10, 2, 2]]]]), 2]
[403, List([[317, 101449, [403, [403]]], [539, 292147, [403, [403]]]]), 2]
[421, List([[346, 120763, [421, [421]]]]), 1]
[427, List([[574, 331207, [427, [427]]]]), 1]
[433, List([[658, 434947, [433, [433]]]]), 1]
[439, List([[518, 269887, [439, [439]]]]), 1]
[448, List([[494, 245527, [448, [56, 8]]], [617, 382549, [448, [28, 4, 2, 2]]], 
 [619, 385027, [448, [56, 8]]]]), 3]
[487, List([[634, 403867, [487, [487]]]]), 1]
[496, List([[428, 184477, [496, [124, 4]]]]), 1]
[499, List([[605, 367849, [499, [499]]]]), 1]
[511, List([[296, 88513, [511, [511]]]]), 1]
[523, List([[569, 325477, [523, [523]]], [595, 355819, [523, [523]]]]), 2]
[547, List([[380, 145549, [547, [547]]], [421, 178513, [547, [547]]]]), 2]
[553, List([[331, 110563, [553, [553]]], [361, 131413, [553, [553]]], [554, 308587, [553, [553]]], 
 [592, 352249, [553, [553]]], [598, 359407, [553, [553]]], [613, 377617, [553, [553]]]]), 6]
[559, List([[764, 585997, [559, [559]]]]), 1]
[571, List([[652, 427069, [571, [571]]]]), 1]
[592, List([[457, 210229, [592, [148, 4]]]]), 1]
[676, List([[704, 497737, [676, [338, 2]]]]), 1]
[691, List([[536, 288913, [691, [691]]]]), 1]
[703, List([[725, 527809, [703, [703]]], [767, 590599, [703, [703]]]]), 2]
[733, List([[562, 317539, [733, [733]]]]), 1]
[751, List([[497, 248509, [751, [751]]]]), 1]
[763, List([[520, 271969, [763, [763]]], [778, 607627, [763, [763]]]]), 2]
[772, List([[899, 810907, [772, [386, 2]]]]), 1]
[784, List([[644, 416677, [784, [196, 4]]]]), 1]
[811, List([[847, 719959, [811, [811]]]]), 1]
[832, List([[722, 523459, [832, [104, 8]]]]), 1]
[868, List([[773, 599857, [868, [434, 2]]]]), 1]
[871, List([[640, 411529, [871, [871]]]]), 1]
[883, List([[823, 679807, [883, [883]]]]), 1]
[892, List([[1033, 1070197, [892, [446, 2]]]]), 1]
[907, List([[910, 830839, [907, [907]]]]), 1]
[961, List([[436, 191413, [961, [31, 31]]], [476, 228013, [961, [961]]], [662, 440239, [961, [961]]]]), 3]
[991, List([[865, 750829, [991, [991]]]]), 1]
[997, List([[890, 794779, [997, [997]]]]), 1]
gp > length(L)
149
\end{verbatim}
}

%time = 30min, 4,687 ms. (Louboutin)
%time = 2h, 23min, 22,860 ms. (new)
%time = 3h, 1min, 30,031 ms. (old)

{\footnotesize
\begin{verbatim}
{
L=List();
for(m=-1,3422,
 if(m%3!=0&&isprime(m^2+3*m+9)==0&&issquarefree(m^2+3*m+9)==1,
 K=bnfinit(x^3-m*x^2-(m+3)*x-1,1);
 if(bnfcertify(K)==1,,print([m,"UnderGRH"]));
 cn=K.no;
 if(cn<=1000,listput(~L,[m,m^2+3*m+9,factor(m^2+3*m+9),K.clgp[1..2]])
 )
 )
);
print(length(L));
M=List();
for(i=1,1000,listput(~M,0);M[i]=select(x->x[4][1]==i,L);
if(M[i]!=List([]),print([i,M[i],length(M[i])]));
)
}
[3, List([[13, 217, [7, 1; 31, 1], [3, [3]]], [14, 247, [13, 1; 19, 1], [3, [3]]], [19, 427, [7, 1; 61, 1], [3, [3]]], 
 [20, 469, [7, 1; 67, 1], [3, [3]]], [22, 559, [13, 1; 43, 1], [3, [3]]]]), 5]
[9, List([[34, 1267, [7, 1; 181, 1], [9, [3, 3]]], [35, 1339, [13, 1; 103, 1], [9, [3, 3]]], 
 [40, 1729, [7, 1; 13, 1; 19, 1], [9, [3, 3]]], [47, 2359, [7, 1; 337, 1], [9, [3, 3]]], 
 [52, 2869, [19, 1; 151, 1], [9, [3, 3]]], [53, 2977, [13, 1; 229, 1], [9, [3, 3]]]]), 6]
[12, List([[26, 763, [7, 1; 109, 1], [12, [6, 2]]], [44, 2077, [31, 1; 67, 1], [12, [6, 2]]], 
 [55, 3199, [7, 1; 457, 1], [12, [6, 2]]], [59, 3667, [19, 1; 193, 1], [12, [6, 2]]], 
 [67, 4699, [37, 1; 127, 1], [12, [6, 2]]]]), 5]
[21, List([[46, 2263, [31, 1; 73, 1], [21, [21]]], [62, 4039, [7, 1; 577, 1], [21, [21]]], 
 [65, 4429, [43, 1; 103, 1], [21, [21]]], [68, 4837, [7, 1; 691, 1], [21, [21]]], 
 [74, 5707, [13, 1; 439, 1], [21, [21]]],  [77, 6169, [31, 1; 199, 1], [21, [21]]], 
 [80, 6649, [61, 1; 109, 1], [21, [21]]], [82, 6979, [7, 1; 997, 1], [21, [21]]]]), 8]
[27, List([[61, 3913, [7, 1; 13, 1; 43, 1], [27, [3, 3, 3]]], [97, 9709, [7, 1; 19, 1; 73, 1], [27, [3, 3, 3]]], 
 [104, 11137, [7, 1; 37, 1; 43, 1], [27, [3, 3, 3]]], [110, 12439, [7, 1; 1777, 1], [27, [9, 3]]]]), 4]
[36, List([[71, 5263, [19, 1; 277, 1], [36, [6, 6]]], [83, 7147, [7, 1; 1021, 1], [36, [6, 6]]], 
 [113, 13117, [13, 1; 1009, 1], [36, [6, 6]]], [118, 14287, [7, 1; 13, 1; 157, 1], [36, [6, 6]]], 
 [137, 19189, [31, 1; 619, 1], [36, [6, 6]]]]), 5]
[39, List([[89, 8197, [7, 1; 1171, 1], [39, [39]]], [92, 8749, [13, 1; 673, 1], [39, [39]]], 
 [109, 12217, [19, 1; 643, 1], [39, [39]]], [115, 13579, [37, 1; 367, 1], [39, [39]]], 
 [119, 14527, [73, 1; 199, 1], [39, [39]]], [124, 15757, [7, 1; 2251, 1], [39, [39]]]]), 6]
[48, List([[157, 25129, [13, 1; 1933, 1], [48, [6, 2, 2, 2]]]]), 1]
[57, List([[76, 6013, [7, 1; 859, 1], [57, [57]]], [125, 16009, [7, 1; 2287, 1], [57, [57]]], 
 [199, 40207, [31, 1; 1297, 1], [57, [57]]]]), 3]
[63, List([[79, 6487, [13, 1; 499, 1], [63, [21, 3]]], [86, 7663, [79, 1; 97, 1], [63, [21, 3]]], 
 [106, 11563, [31, 1; 373, 1], [63, [21, 3]]], [145, 21469, [7, 1; 3067, 1], [63, [21, 3]]], 
 [148, 22357, [79, 1; 283, 1], [63, [21, 3]]], [185, 34789, [19, 1; 1831, 1], [63, [21, 3]]]]), 6]
[81, List([[131, 17563, [7, 1; 13, 1; 193, 1], [81, [9, 3, 3]]], [173, 30457, [7, 1; 19, 1; 229, 1], [81, [9, 3, 3]]], 
 [178, 32227, [13, 1; 37, 1; 67, 1], [81, [9, 3, 3]]]]), 3]
[84, List([[151, 23263, [43, 1; 541, 1], [84, [42, 2]]], [227, 52219, [79, 1; 661, 1], [84, [42, 2]]], 
 [229, 53137, [7, 1; 7591, 1], [84, [42, 2]]], [242, 59299, [19, 1; 3121, 1], [84, [42, 2]]]]), 4]
[93, List([[116, 13813, [19, 1; 727, 1], [93, [93]]], [128, 16777, [19, 1; 883, 1], [93, [93]]], 
 [203, 41827, [151, 1; 277, 1], [93, [93]]]]), 3]
[108, List([[146, 21763, [7, 1; 3109, 1], [108, [18, 6]]], [170, 29419, [13, 1; 31, 1; 73, 1], [108, [6, 6, 3]]], 
 [194, 38227, [7, 1; 43, 1; 127, 1], [108, [6, 6, 3]]], [223, 50407, [7, 1; 19, 1; 379, 1], [108, [6, 6, 3]]], 
 [230, 53599, [7, 1; 13, 1; 19, 1; 31, 1], [108, [6, 6, 3]]], [248, 62257, [13, 1; 4789, 1], [108, [18, 6]]]]), 6]
[111, List([[160, 26089, [7, 1; 3727, 1], [111, [111]]], [161, 26413, [61, 1; 433, 1], [111, [111]]], 
 [167, 28399, [7, 1; 4057, 1], [111, [111]]], [190, 36679, [43, 1; 853, 1], [111, [111]]]]), 4]
[117, List([[202, 41419, [7, 1; 61, 1; 97, 1], [117, [39, 3]]], [208, 43897, [7, 1; 6271, 1], [117, [39, 3]]], 
 [209, 44317, [7, 1; 13, 1; 487, 1], [117, [39, 3]]], [232, 54529, [31, 1; 1759, 1], [117, [39, 3]]], 
 [253, 64777, [211, 1; 307, 1], [117, [39, 3]]]]), 5]
[129, List([[217, 47749, [13, 1; 3673, 1], [129, [129]]]]), 1]
[144, List([[166, 28063, [7, 1; 19, 1; 211, 1], [144, [12, 12]]], [263, 69967, [31, 1; 37, 1; 61, 1], [144, [12, 12]]], 
 [280, 79249, [19, 1; 43, 1; 97, 1], [144, [12, 12]]], [287, 83239, [13, 1; 19, 1; 337, 1], [144, [12, 12]]], 
 [329, 109237, [313, 1; 349, 1], [144, [12, 12]]]]), 5]
[147, List([[193, 37837, [157, 1; 241, 1], [147, [21, 7]]], [295, 87919, [13, 1; 6763, 1], [147, [21, 7]]], 
 [319, 102727, [43, 1; 2389, 1], [147, [147]]]]), 3]
[156, List([[149, 22657, [139, 1; 163, 1], [156, [78, 2]]], [184, 34417, [127, 1; 271, 1], [156, [78, 2]]], 
 [187, 35539, [7, 1; 5077, 1], [156, [78, 2]]], [274, 75907, [13, 1; 5839, 1], [156, [78, 2]]], 
 [307, 95179, [7, 1; 13597, 1], [156, [78, 2]]], [308, 95797, [13, 1; 7369, 1], [156, [78, 2]]]]), 6]
[171, List([[215, 46879, [7, 1; 37, 1; 181, 1], [171, [57, 3]]], [247, 61759, [151, 1; 409, 1], [171, [57, 3]]], 
 [344, 119377, [19, 1; 61, 1; 103, 1], [171, [57, 3]]]]), 3]
[183, List([[196, 39013, [13, 1; 3001, 1], [183, [183]]], [245, 60769, [67, 1; 907, 1], [183, [183]]], 
 [298, 89707, [109, 1; 823, 1], [183, [183]]], [340, 116629, [223, 1; 523, 1], [183, [183]]]]), 4]
[189, List([[257, 66829, [7, 1; 9547, 1], [189, [63, 3]]], [265, 71029, [7, 1; 73, 1; 139, 1], [189, [21, 3, 3]]], 
 [325, 106609, [19, 1; 31, 1; 181, 1], [189, [21, 3, 3]]], [328, 108577, [7, 1; 15511, 1], [189, [63, 3]]], 
 [353, 125677, [109, 1; 1153, 1], [189, [63, 3]]]]), 5]
[192, List([[283, 80947, [61, 1; 1327, 1], [192, [24, 8]]]]), 1]
[201, List([[278, 78127, [7, 1; 11161, 1], [201, [201]]], [320, 103369, [7, 1; 14767, 1], [201, [201]]]]), 2]
[219, List([[211, 45163, [19, 1; 2377, 1], [219, [219]]], [221, 49513, [67, 1; 739, 1], [219, [219]]], 
 [268, 72637, [19, 1; 3823, 1], [219, [219]]], [314, 99547, [7, 1; 14221, 1], [219, [219]]], 
 [379, 144787, [67, 1; 2161, 1], [219, [219]]], [452, 205669, [43, 1; 4783, 1], [219, [219]]]]), 6]
[225, List([[233, 54997, [43, 1; 1279, 1], [225, [15, 15]]], [244, 60277, [7, 1; 79, 1; 109, 1], [225, [15, 15]]], 
 [256, 66313, [13, 1; 5101, 1], [225, [15, 15]]], [293, 86737, [7, 1; 12391, 1], [225, [15, 15]]]]), 4]
[228, List([[191, 37063, [13, 1; 2851, 1], [228, [114, 2]]], [370, 138019, [7, 1; 19717, 1], [228, [114, 2]]]]), 2]
[237, List([[251, 63763, [7, 1; 9109, 1], [237, [237]]], [289, 84397, [37, 1; 2281, 1], [237, [237]]]]), 2]
[243, List([[338, 115267, [73, 1; 1579, 1], [243, [27, 9]]], 
 [385, 149389, [31, 1; 61, 1; 79, 1], [243, [9, 9, 3]]]]), 2]
[252, List([[181, 33313, [7, 1; 4759, 1], [252, [42, 6]]], [275, 76459, [157, 1; 487, 1], [252, [42, 6]]], 
 [292, 86149, [7, 1; 31, 1; 397, 1], [252, [42, 6]]], [313, 98917, [7, 1; 13, 1; 1087, 1], [252, [42, 6]]], 
 [350, 123559, [157, 1; 787, 1], [252, [42, 6]]], [358, 129247, [307, 1; 421, 1], [252, [42, 6]]], 
 [412, 170989, [7, 1; 13, 1; 1879, 1], [252, [42, 6]]]]), 7]
[273, List([[241, 58813, [103, 1; 571, 1], [273, [273]]], [272, 74809, [7, 1; 10687, 1], [273, [273]]], 
 [316, 100813, [73, 1; 1381, 1], [273, [273]]], [383, 147847, [7, 1; 21121, 1], [273, [273]]], 
 [437, 192289, [37, 1; 5197, 1], [273, [273]]]]), 5]
[279, List([[355, 127099, [7, 1; 67, 1; 271, 1], [279, [93, 3]]], [362, 132139, [7, 1; 43, 1; 439, 1], [279, [93, 3]]], 
 [413, 171817, [19, 1; 9043, 1], [279, [93, 3]]], [484, 235717, [73, 1; 3229, 1], [279, [93, 3]]]]), 4]
[291, List([[349, 122857, [7, 1; 17551, 1], [291, [291]]]]), 1]
[300, List([[236, 56413, [7, 1; 8059, 1], [300, [30, 10]]]]), 1]
[309, List([[409, 168517, [43, 1; 3919, 1], [309, [309]]]]), 1]
[324, List([[334, 112567, [7, 1; 13, 1; 1237, 1], [324, [18, 6, 3]]], [341, 117313, [7, 1; 16759, 1], [324, [18, 18]]], 
 [377, 143269, [7, 1; 97, 1; 211, 1], [324, [18, 6, 3]]], [382, 147079, [19, 1; 7741, 1], [324, [18, 18]]], 
 [503, 254527, [7, 1; 13, 1; 2797, 1], [324, [18, 6, 3]]]]), 5]
[327, List([[430, 186199, [13, 1; 14323, 1], [327, [327]]], [467, 219499, [7, 1; 31357, 1], [327, [327]]]]), 2]
[333, List([[448, 202057, [37, 1; 43, 1; 127, 1], [333, [111, 3]]]]), 1]
[336, List([[311, 97663, [127, 1; 769, 1], [336, [84, 4]]], [347, 121459, [13, 1; 9343, 1], [336, [42, 2, 2, 2]]], 
 [365, 134329, [13, 1; 10333, 1], [336, [84, 4]]], [400, 161209, [37, 1; 4357, 1], [336, [84, 4]]], 
 [407, 166879, [109, 1; 1531, 1], [336, [84, 4]]], [419, 176827, [7, 1; 25261, 1], [336, [42, 2, 2, 2]]], 
 [449, 202957, [31, 1; 6547, 1], [336, [84, 4]]], [454, 207487, [7, 1; 29641, 1], [336, [84, 4]]], 
 [458, 211147, [19, 1; 11113, 1], [336, [84, 4]]], [460, 212989, [7, 1; 30427, 1], [336, [84, 4]]]]), 10]
[351, List([[337, 114589, [19, 1; 37, 1; 163, 1], [351, [39, 3, 3]]], [373, 140257, [13, 1; 10789, 1], [351, [117, 3]]], 
 [425, 181909, [7, 1; 13, 1; 1999, 1], [351, [39, 3, 3]]], [445, 199369, [193, 1; 1033, 1], [351, [117, 3]]], 
 [455, 208399, [271, 1; 769, 1], [351, [117, 3]]]]), 5]
[372, List([[352, 124969, [13, 1; 9613, 1], [372, [186, 2]]], [389, 152497, [73, 1; 2089, 1], [372, [186, 2]]], 
 [427, 183619, [139, 1; 1321, 1], [372, [186, 2]]], [470, 222319, [19, 1; 11701, 1], [372, [186, 2]]], 
 [500, 251509, [163, 1; 1543, 1], [372, [186, 2]]], [505, 256549, [151, 1; 1699, 1], [372, [186, 2]]], 
 [547, 300859, [13, 1; 23143, 1], [372, [186, 2]]]]), 7]
[381, List([[394, 156427, [19, 1; 8233, 1], [381, [381]]]]), 1]
[387, List([[323, 105307, [31, 1; 43, 1; 79, 1], [387, [129, 3]]], [424, 181057, [331, 1; 547, 1], [387, [129, 3]]], 
 [550, 304159, [103, 1; 2953, 1], [387, [129, 3]]]]), 3]
[399, List([[485, 236689, [37, 1; 6397, 1], [399, [399]]], [529, 281437, [13, 1; 21649, 1], [399, [399]]], 
 [553, 307477, [19, 1; 16183, 1], [399, [399]]]]), 3]
[417, List([[226, 51763, [37, 1; 1399, 1], [417, [417]]], [416, 174313, [31, 1; 5623, 1], [417, [417]]], 
 [599, 360607, [13, 1; 27739, 1], [417, [417]]]]), 3]
[432, List([[434, 189667, [241, 1; 787, 1], [432, [36, 12]]], [439, 194047, [7, 1; 19, 1; 1459, 1], 
 [432, [6, 6, 6, 2]]], [440, 194929, [7, 1; 27847, 1], [432, [36, 12]]], 
 [524, 276157, [7, 1; 39451, 1], [432, [36, 12]]], [542, 295399, [13, 1; 31, 1; 733, 1], [432, [12, 12, 3]]], 
 [560, 315289, [13, 1; 79, 1; 307, 1], [432, [12, 12, 3]]]]), 6]
[441, List([[469, 221377, [13, 1; 17029, 1], [441, [147, 3]]], 
 [508, 259597, [13, 1; 19, 1; 1051, 1], [441, [21, 21]]], [509, 260617, [7, 1; 31, 1; 1201, 1], [441, [147, 3]]]]), 3]
[444, List([[557, 311929, [73, 1; 4273, 1], [444, [222, 2]]]]), 1]
[468, List([[418, 175987, [7, 1; 31, 1; 811, 1], [468, [78, 6]]], [475, 227059, [7, 1; 163, 1; 199, 1], [468, [78, 6]]], 
 [502, 253519, [7, 1; 36217, 1], [468, [78, 6]]], [517, 268849, [7, 1; 193, 1; 199, 1], [468, [78, 6]]], 
 [535, 287839, [73, 1; 3943, 1], [468, [78, 6]]], [559, 314167, [7, 1; 37, 1; 1213, 1], [468, [78, 6]]], 
 [575, 332359, [127, 1; 2617, 1], [468, [78, 6]]]]), 7]
[471, List([[386, 150163, [13, 1; 11551, 1], [471, [471]]]]), 1]
[507, List([[443, 197587, [13, 1; 15199, 1], [507, [507]]], [490, 241579, [13, 1; 18583, 1], [507, [507]]], 
 [647, 420559, [67, 1; 6277, 1], [507, [507]]]]), 3]
[513, List([[638, 408967, [13, 1; 163, 1; 193, 1], [513, [57, 3, 3]]]]), 1]
[516, List([[479, 230887, [373, 1; 619, 1], [516, [258, 2]]], [682, 467179, [139, 1; 3361, 1], [516, [258, 2]]]]), 2]
[525, List([[628, 396277, [7, 1; 56611, 1], [525, [105, 5]]]]), 1]
[549, List([[326, 107263, [13, 1; 37, 1; 223, 1], [549, [183, 3]]], 
 [481, 232813, [7, 1; 79, 1; 421, 1], [549, [183, 3]]], [488, 239617, [7, 1; 34231, 1], [549, [183, 3]]], 
 [493, 244537, [97, 1; 2521, 1], [549, [183, 3]]], [715, 513379, [349, 1; 1471, 1], [549, [183, 3]]]]), 5]
[567, List([[422, 179359, [67, 1; 2677, 1], [567, [63, 9]]], 
 [608, 371497, [7, 1; 73, 1; 727, 1], [567, [63, 3, 3]]], [685, 471289, [7, 1; 13, 1; 5179, 1], [567, [63, 3, 3]]], 
 [703, 496327, [13, 1; 73, 1; 523, 1], [567, [63, 3, 3]]]]), 4] 
[579, List([[610, 373939, [19, 1; 19681, 1], [579, [579]]], [707, 501979, [37, 1; 13567, 1], [579, [579]]], 
 [718, 517687, [79, 1; 6553, 1], [579, [579]]]]), 3]
[588, List([[512, 263689, [457, 1; 577, 1], [588, [294, 2]]], [523, 275107, [7, 1; 39301, 1], [588, [42, 14]]], 
 [532, 284629, [379, 1; 751, 1], [588, [42, 14]]], [548, 301957, [37, 1; 8161, 1], [588, [42, 14]]], 
 [712, 509089, [7, 1; 72727, 1], [588, [294, 2]]], [784, 617017, [97, 1; 6361, 1], [588, [294, 2]]]]), 6]
[597, List([[689, 476797, [397, 1; 1201, 1], [597, [597]]]]), 1]
[603, List([[623, 390007, [67, 1; 5821, 1], [603, [201, 3]]]]), 1]
[624, List([[614, 378847, [7, 1; 54121, 1], [624, [156, 4]]], 
 [637, 407689, [373, 1; 1093, 1], [624, [156, 4]]]]), 2]
[633, List([[530, 282499, [7, 1; 40357, 1], [633, [633]]], [679, 463087, [19, 1; 24373, 1], [633, [633]]]]), 2]
[651, List([[650, 424459, [7, 1; 60637, 1], [651, [651]]], [665, 444229, [307, 1; 1447, 1], [651, [651]]], 
 [713, 510517, [7, 1; 72931, 1], [651, [651]]]]), 3]
[657, List([[527, 279319, [19, 1; 61, 1; 241, 1], [657, [219, 3]]], [587, 346339, [7, 1; 49477, 1], [657, [219, 3]]], 
 [749, 563257, [43, 1; 13099, 1], [657, [219, 3]]], [790, 626479, [7, 1; 31, 1; 2887, 1], [657, [219, 3]]]]), 4]
[675, List([[464, 216697, [13, 1; 79, 1; 211, 1], [675, [15, 15, 3]]], 
 [568, 324337, [13, 1; 61, 1; 409, 1], [675, [15, 15, 3]]]]), 2]
[684, List([[511, 262663, [31, 1; 37, 1; 229, 1], [684, [114, 6]]], 
 [710, 506239, [43, 1; 61, 1; 193, 1], [684, [114, 6]]], 
 [734, 540967, [7, 1; 109, 1; 709, 1], [684, [114, 6]]]]), 3]
[687, List([[415, 173479, [283, 1; 613, 1], [687, [687]]]]), 1]
[711, List([[451, 204763, [13, 1; 19, 1; 829, 1], [711, [237, 3]]], 
 [572, 328909, [7, 1; 19, 1; 2473, 1], [711, [237, 3]]], 
 [620, 386269, [13, 1; 43, 1; 691, 1], [711, [237, 3]]], [667, 446899, [19, 1; 43, 1; 547, 1], [711, [237, 3]]], 
 [733, 539497, [7, 1; 37, 1; 2083, 1], [711, [237, 3]]], [760, 579889, [631, 1; 919, 1], [711, [237, 3]]], 
 [770, 595219, [37, 1; 16087, 1], [711, [237, 3]]]]), 7]
[729, List([[563, 318667, [223, 1; 1429, 1], [729, [27, 27]]], [604, 366637, [31, 1; 11827, 1], [729, [27, 27]]], 
 [659, 436267, [13, 1; 37, 1; 907, 1], [729, [27, 9, 3]]], 
 [664, 442897, [7, 1; 13, 1; 31, 1; 157, 1], [729, [9, 9, 3, 3]]], 
 [797, 637609, [7, 1; 79, 1; 1153, 1], [729, [27, 9, 3]]]]), 5]
[732, List([[461, 213913, [7, 1; 30559, 1], [732, [366, 2]]]]), 1]
[741, List([[376, 142513, [7, 1; 20359, 1], [741, [741]]], [431, 187063, [283, 1; 661, 1], [741, [741]]], 
 [643, 415387, [7, 1; 59341, 1], [741, [741]]]]), 3]
[756, List([[496, 247513, [7, 1; 19, 1; 1861, 1], [756, [42, 6, 3]]], 
 [538, 291067, [7, 1; 43, 1; 967, 1], [756, [42, 6, 3]]], 
 [584, 342817, [19, 1; 18043, 1], [756, [126, 6]]], [625, 392509, [13, 1; 109, 1; 277, 1], [756, [42, 6, 3]]], 
 [698, 489307, [7, 1; 13, 1; 19, 1; 283, 1], [756, [42, 6, 3]]], [808, 655297, [613, 1; 1069, 1], [756, [126, 6]]]]), 6]
[768, List([[668, 448237, [97, 1; 4621, 1], [768, [48, 16]]]]), 1]
[777, List([[526, 278263, [463, 1; 601, 1], [777, [777]]], [680, 464449, [109, 1; 4261, 1], [777, [777]]], 
 [799, 640807, [211, 1; 3037, 1], [777, [777]]]]), 3]
[804, List([[578, 335827, [499, 1; 673, 1], [804, [402, 2]]], [794, 632827, [13, 1; 48679, 1], [804, [402, 2]]], 
 [803, 647227, [7, 1; 92461, 1], [804, [402, 2]]]]), 3]
[819, List([[478, 229927, [31, 1; 7417, 1], [819, [273, 3]]], [590, 349879, [97, 1; 3607, 1], [819, [273, 3]]], 
 [670, 450919, [7, 1; 37, 1; 1741, 1], [819, [273, 3]]]]), 3]
[831, List([[802, 645619, [13, 1; 49663, 1], [831, [831]]]]), 1]
[837, List([[589, 348697, [157, 1; 2221, 1], [837, [279, 3]]], 
 [757, 575329, [31, 1; 67, 1; 277, 1], [837, [93, 3, 3]]], [769, 593677, [7, 1; 84811, 1], [837, [279, 3]]]]), 3]
[873, List([[635, 405139, [7, 1; 31, 1; 1867, 1], [873, [291, 3]]], 
 [656, 432313, [7, 1; 151, 1; 409, 1], [873, [291, 3]]], [692, 480949, [7, 1; 127, 1; 541, 1], [873, [291, 3]]], 
 [812, 661789, [19, 1; 61, 1; 571, 1], [873, [291, 3]]]]), 4]
[876, List([[641, 412813, [19, 1; 21727, 1], [876, [438, 2]]], [748, 561757, [7, 1; 80251, 1], [876, [438, 2]]], 
 [815, 666679, [13, 1; 51283, 1], [876, [438, 2]]]]), 3]
[900, List([[491, 242563, [43, 1; 5641, 1], [900, [30, 30]]], [892, 798349, [37, 1; 21577, 1], [900, [30, 30]]], 
 [914, 838147, [19, 1; 31, 1; 1423, 1], [900, [30, 30]]]]), 3]
[903, List([[674, 456307, [199, 1; 2293, 1], [903, [903]]], [854, 731887, [13, 1; 56299, 1], [903, [903]]]]), 2]
[912, List([[545, 298669, [7, 1; 42667, 1], [912, [228, 4]]], [848, 721657, [67, 1; 10771, 1], [912, [228, 4]]]]), 2]
[921, List([[775, 602959, [7, 1; 86137, 1], [921, [921]]]]), 1]
[927, List([[907, 825379, [19, 1; 43441, 1], [927, [309, 3]]]]), 1]
[939, List([[601, 363013, [7, 1; 51859, 1], [939, [939]]]]), 1]
[948, List([[683, 468547, [103, 1; 4549, 1], [948, [474, 2]]], [1043, 1090987, [853, 1; 1279, 1], [948, [474, 2]]]]), 2]
[972, List([[391, 154063, [7, 1; 13, 1; 1693, 1], [972, [18, 18, 3]]], 
 [571, 327763, [31, 1; 97, 1; 109, 1], [972, [18, 18, 3]]], [824, 681457, [7, 1; 67, 1; 1453, 1], [972, [18, 18, 3]]], 
 [845, 716569, [7, 1; 102367, 1], [972, [54, 18]]]]), 4]
[975, List([[967, 937999, [751, 1; 1249, 1], [975, [195, 5]]]]), 1]
[981, List([[763, 584467, [13, 1; 44959, 1], [981, [327, 3]]], [782, 613879, [7, 1; 87697, 1], [981, [327, 3]]]]), 2]
[993, List([[401, 162013, [19, 1; 8527, 1], [993, [993]]]]), 1]
[999, List([[743, 554287, [19, 1; 29173, 1], [999, [333, 3]]], [754, 570787, [7, 1; 73, 1; 1117, 1], [999, [111, 3, 3]]], 
 [859, 740467, [7, 1; 13, 1; 79, 1; 103, 1], [999, [111, 3, 3]]], 
 [893, 800137, [13, 1; 61, 1; 1009, 1], [999, [111, 3, 3]]]]), 4]
gp > length(L)
308
\end{verbatim}
}

%time = 1h, 5min, 11,937 ms.(Louboutin)
%time = 9h, 27min, 43,063 ms.(new)
%time = 8h, 48min, 46,516 ms.(old)

{\footnotesize 
\begin{verbatim}
{
L=List();
for(m=-1,3422,
 if((m%9==0||m%9==6)&&isprime((m^2+3*m+9)/9)==1,
 K=bnfinit(x^3-m*x^2-(m+3)*x-1,1);
 if(bnfcertify(K)==1,,print([m,"UnderGRH"]));
 cn=K.no;
 if(cn<=1000,listput(~L,[m,(m^2+3*m+9)/9,K.clgp[1..2]])
 )
 )
);
M=List();
for(i=1,1000,listput(~M,0);M[i]=select(x->x[3][1]==i,L);
if(M[i]!=List([]),print([i,M[i],length(M[i])]));
)
}
[3, List([[6, 7, [3, [3]]], [9, 13, [3, [3]]], [15, 31, [3, [3]]], [18, 43, [3, [3]]]]), 4]
[9, List([[24, 73, [9, [3, 3]]]]), 1]
[12, List([[36, 157, [12, [6, 2]]], [42, 211, [12, [6, 2]]], [45, 241, [12, [6, 2]]]]), 3]
[21, List([[60, 421, [21, [21]]], [63, 463, [21, [21]]]]), 2]
[39, List([[99, 1123, [39, [39]]]]), 1]
[48, List([[72, 601, [48, [12, 4]]]]), 1]
[57, List([[123, 1723, [57, [57]]]]), 1]
[63, List([[51, 307, [63, [21, 3]]], [81, 757, [63, [21, 3]]], [162, 2971, [63, [21, 3]]]]), 3]
[84, List([[150, 2551, [84, [42, 2]]]]), 1]
[111, List([[114, 1483, [111, [111]]]]), 1]
[147, List([[234, 6163, [147, [147]]]]), 1]
[156, List([[177, 3541, [156, [78, 2]]], [225, 5701, [156, [78, 2]]]]), 2]
[189, List([[186, 3907, [189, [63, 3]]], [240, 6481, [189, [63, 3]]], 
 [267, 8011, [189, [63, 3]]], [297, 9901, [189, [63, 3]]]]), 4]
[201, List([[207, 4831, [201, [201]]]]), 1]
[225, List([[270, 8191, [225, [15, 15]]]]), 1]
[228, List([[315, 11131, [228, [114, 2]]]]), 1]
[252, List([[213, 5113, [252, [42, 6]]]]), 1]
[273, List([[198, 4423, [273, [273]]], [357, 14281, [273, [273]]]]), 2]
[300, List([[171, 3307, [300, [30, 10]]]]), 1]
[309, List([[330, 12211, [309, [309]]], [333, 12433, [309, [309]]]]), 2]
[336, List([[231, 6007, [336, [84, 4]]], [393, 17293, [336, [84, 4]]]]), 2]
[372, List([[423, 20023, [372, [186, 2]]], [465, 24181, [372, [186, 2]]]]), 2]
[399, List([[414, 19183, [399, [399]]]]), 1]
[471, List([[528, 31153, [471, [471]]]]), 1]
[525, List([[582, 37831, [525, [105, 5]]]]), 1]
[567, List([[351, 13807, [567, [63, 9]]], [429, 20593, [567, [63, 9]]]]), 2]
[603, List([[459, 23563, [603, [201, 3]]], [645, 46441, [603, [201, 3]]]]), 2]
[624, List([[303, 10303, [624, [156, 4]]]]), 1]
[633, List([[504, 28393, [633, [633]]]]), 1]
[657, List([[483, 26083, [657, [219, 3]]]]), 1]
[687, List([[450, 22651, [687, [687]]]]), 1]
[723, List([[519, 30103, [723, [723]]]]), 1]
[732, List([[693, 53593, [732, [366, 2]]]]), 1]
[777, List([[627, 43891, [777, [777]]]]), 1]
[819, List([[567, 35911, [819, [273, 3]]]]), 1]
[876, List([[735, 60271, [876, [438, 2]]]]), 1]
[975, List([[441, 21757, [975, [195, 5]]]]), 1]
[993, List([[654, 47743, [993, [993]]]]), 1]
gp > length(L)
56
\end{verbatim}
}

%time = 11min, 31,782 ms.(Louboutin)
%time = 49min, 37,876 ms.(new2)
%time = 1h, 55min, 27,813 ms.(new)
%time = 1h, 18min, 49,593 ms.(old)

{\footnotesize 
\begin{verbatim}
{
L=List();
for(m=-1,3422,
 if(m!=0&&(m%9==0||m%9==6)&&isprime((m^2+3*m+9)/9)==0&&issquarefree((m^2+3*m+9)/9)==1,
 K=bnfinit(x^3-m*x^2-(m+3)*x-1,1);
 if(bnfcertify(K)==1,,print([m,"UnderGRH"]));
 cn=K.no;
 if(cn<=1000,listput(~L,[m,(m^2+3*m+9)/9,factor((m^2+3*m+9)/9),K.clgp[1..2]])
 )
 )
);
M=List();
for(i=1,1000,listput(~M,0);M[i]=select(x->x[4][1]==i,L);
if(M[i]!=List([]),print([i,M[i],length(M[i])]));
)
}
[9, List([[27, 91, [7, 1; 13, 1], [9, [3, 3]]], [33, 133, [7, 1; 19, 1], [9, [3, 3]]]]), 2]
[27, List([[69, 553, [7, 1; 79, 1], [27, [3, 3, 3]]], [78, 703, [19, 1; 37, 1], [27, [3, 3, 3]]]]), 2]
[36, List([[87, 871, [13, 1; 67, 1], [36, [6, 6]]]]), 1]
[63, List([[105, 1261, [13, 1; 97, 1], [63, [21, 3]]], [108, 1333, [31, 1; 43, 1], [63, [21, 3]]], 
 [117, 1561, [7, 1; 223, 1], [63, [21, 3]]], [132, 1981, [7, 1; 283, 1], [63, [21, 3]]]]), 4]
[81, List([[144, 2353, [13, 1; 181, 1], [81, [9, 3, 3]]], [153, 2653, [7, 1; 379, 1], [81, [9, 3, 3]]]]), 2]
[108, List([[126, 1807, [13, 1; 139, 1], [108, [6, 6, 3]]], [135, 2071, [19, 1; 109, 1], [108, [6, 6, 3]]], 
 [168, 3193, [31, 1; 103, 1], [108, [6, 6, 3]]], [189, 4033, [37, 1; 109, 1], [108, [6, 6, 3]]]]), 4]
[117, List([[96, 1057, [7, 1; 151, 1], [117, [39, 3]]], [159, 2863, [7, 1; 409, 1], [117, [39, 3]]], 
 [195, 4291, [7, 1; 613, 1], [117, [39, 3]]]]), 3]
[144, List([[180, 3661, [7, 1; 523, 1], [144, [6, 6, 2, 2]]]]), 1]
[171, List([[249, 6973, [19, 1; 367, 1], [171, [57, 3]]], [294, 9703, [31, 1; 313, 1], [171, [57, 3]]]]), 2]
[189, List([[141, 2257, [37, 1; 61, 1], [189, [21, 3, 3]]], [243, 6643, [7, 1; 13, 1; 73, 1], [189, [21, 3, 3]]]]), 2]
[243, List([[222, 5551, [7, 1; 13, 1; 61, 1], [243, [9, 3, 3, 3]]], [258, 7483, [7, 1; 1069, 1], [243, [9, 9, 3]]], 
 [288, 9313, [67, 1; 139, 1], [243, [9, 9, 3]]]]), 3]
[252, List([[252, 7141, [37, 1; 193, 1], [252, [42, 6]]], [339, 12883, [13, 1; 991, 1], [252, [42, 6]]]]), 2]
[324, List([[360, 14521, [13, 1; 1117, 1], [324, [18, 6, 3]]]]), 1]
[333, List([[216, 5257, [7, 1; 751, 1], [333, [111, 3]]], [279, 8743, [7, 1; 1249, 1], [333, [111, 3]]], 
 [285, 9121, [7, 1; 1303, 1], [333, [111, 3]]], [312, 10921, [67, 1; 163, 1], [333, [111, 3]]], 
 [342, 13111, [7, 1; 1873, 1], [333, [111, 3]]]]), 5]
[351, List([[276, 8557, [43, 1; 199, 1], [351, [39, 3, 3]]]]), 1]
[432, List([[375, 15751, [19, 1; 829, 1], [432, [12, 12, 3]]], 
 [432, 20881, [7, 1; 19, 1; 157, 1], [432, [12, 12, 3]]]]), 2]
[441, List([[378, 16003, [13, 1; 1231, 1], [441, [147, 3]]]]), 1]
[468, List([[522, 30451, [37, 1; 823, 1], [468, [78, 6]]]]), 1]
[513, List([[474, 25123, [7, 1; 37, 1; 97, 1], [513, [57, 3, 3]]]]), 1]
[549, List([[537, 32221, [7, 1; 4603, 1], [549, [183, 3]]]]), 1]
[567, List([[261, 7657, [13, 1; 19, 1; 31, 1], [567, [21, 3, 3, 3]]], [324, 11773, [61, 1; 193, 1], [567, [63, 3, 3]]], 
 [405, 18361, [7, 1; 43, 1; 61, 1], [567, [21, 3, 3, 3]]], [549, 33673, [151, 1; 223, 1], [567, [63, 3, 3]]]]), 4]
[576, List([[540, 32581, [31, 1; 1051, 1], [576, [12, 12, 2, 2]]]]), 1]
[603, List([[420, 19741, [19, 1; 1039, 1], [603, [201, 3]]]]), 1]
[657, List([[366, 15007, [43, 1; 349, 1], [657, [219, 3]]], [387, 16771, [31, 1; 541, 1], [657, [219, 3]]], 
 [603, 40603, [19, 1; 2137, 1], [657, [219, 3]]]]), 3]
[684, List([[369, 15253, [7, 1; 2179, 1], [684, [114, 6]]], [402, 18091, [79, 1; 229, 1], [684, [114, 6]]], 
 [468, 24493, [7, 1; 3499, 1], [684, [114, 6]]], [672, 50401, [13, 1; 3877, 1], [684, [114, 6]]]]), 4]
[711, List([[396, 17557, [97, 1; 181, 1], [711, [237, 3]]]]), 1]
[729, List([[447, 22351, [7, 1; 31, 1; 103, 1], [729, [9, 9, 3, 3]]], 
 [477, 25441, [13, 1; 19, 1; 103, 1], [729, [9, 9, 3, 3]]], 
 [690, 53131, [13, 1; 61, 1; 67, 1], [729, [9, 9, 3, 3]]]]), 3]
[756, List([[306, 10507, [7, 1; 19, 1; 79, 1], [756, [42, 6, 3]]]]), 1]
[837, List([[321, 11557, [7, 1; 13, 1; 127, 1], [837, [93, 3, 3]]], 
 [594, 39403, [7, 1; 13, 1; 433, 1], [837, [93, 3, 3]]], [612, 41821, [13, 1; 3217, 1], [837, [93, 3, 3]]], 
 [657, 48181, [7, 1; 6883, 1], [837, [93, 3, 3]]]]), 4]
[900, List([[780, 67861, [79, 1; 859, 1], [900, [30, 30]]]]), 1]
[972, List([[546, 33307, [19, 1; 1753, 1], [972, [18, 18, 3]]], [558, 34783, [7, 1; 4969, 1], [972, [18, 18, 3]]], 
 [648, 46873, [19, 1; 2467, 1], [972, [18, 18, 3]]]]), 3]
gp > length(L)
67
\end{verbatim}
}

%time = 28min, 44,547 ms.(Louboutin)
%time = 4h, 29min, 28,063 ms.(new)
%time = 3h, 10min, 57,031 ms.(old)

%Type C

%%%%%%%%%%%%%%%%%%%%%%%%%%%%%%%%%%%%%%%%%%%%%%%%%%%%%
%
{\footnotesize 
\begin{verbatim}
{
L=List();
for(m=-1,6417,
 if((m%27==3||m%27==21)&&isprime((m^2+3*m+9)/27)==1,
 K=bnfinit(x^3-m*x^2-(m+3)*x-1,1);
 if(bnfcertify(K)==1,,print([m,"UnderGRH"]));
 cn=K.no;
 if(cn<=1000,listput(~L,[m,(m^2+3*m+9)/27,K.clgp[1..2]])
 )
 )
);
M=List();
for(i=1,1000,listput(~M,0);M[i]=select(x->x[3][1]==i,L);
if(M[i]!=List([]),print([i,M[i],length(M[i])]));
)
}
[3, List([[21, 19, [3, [3]]], [30, 37, [3, [3]]]]), 2]
[9, List([[84, 271, [9, [3, 3]]]]), 1]
[12, List([[57, 127, [12, [6, 2]]]]), 1]
[21, List([[102, 397, [21, [21]]], [129, 631, [21, [21]]]]), 2]
[39, List([[210, 1657, [39, [39]]]]), 1]
[48, List([[273, 2791, [48, [12, 4]]]]), 1]
[84, List([[219, 1801, [84, [42, 2]]]]), 1]
[111, List([[372, 5167, [111, [111]]]]), 1]
[117, List([[156, 919, [117, [39, 3]]]]), 1]
[147, List([[291, 3169, [147, [147]]]]), 1]
[156, List([[435, 7057, [156, [78, 2]]]]), 1]
[183, List([[345, 4447, [183, [183]]]]), 1]
[189, List([[246, 2269, [189, [63, 3]]]]), 1]
[228, List([[381, 5419, [228, [114, 2]]], [669, 16651, [228, [114, 2]]]]), 2]
[243, List([[570, 12097, [243, [27, 9]]]]), 1]
[252, List([[408, 6211, [252, [42, 6]]]]), 1]
[324, List([[732, 19927, [324, [18, 18]]]]), 1]
[333, List([[813, 24571, [333, [111, 3]]]]), 1]
[372, List([[777, 22447, [372, [186, 2]]]]), 1]
[417, List([[597, 13267, [417, [417]]]]), 1]
[432, List([[561, 11719, [432, [36, 12]]]]), 1]
[525, List([[858, 27361, [525, [105, 5]]]]), 1]
[567, List([[723, 19441, [567, [63, 9]]]]), 1]
[588, List([[840, 26227, [588, [294, 2]]]]), 1]
[687, List([[1155, 49537, [687, [687]]]]), 1]
[732, List([[948, 33391, [732, [366, 2]]], [1263, 59221, [732, [366, 2]]]]), 2]
[756, List([[1128, 47251, [756, [126, 6]]]]), 1]
[777, List([[1074, 42841, [777, [777]]]]), 1]
[900, List([[975, 35317, [900, [30, 30]]]]), 1]
[921, List([[1407, 73477, [921, [921]]]]), 1]
gp > length(L)
34
\end{verbatim}
}

%time = 8min, 10,031 ms.(Louboutin)
%time = 4h, 29min, 28,063 ms.(new)
%time = 3h, 10min, 57,031 ms.(old)

{\footnotesize 
\begin{verbatim}
{
L=List();
for(m=-1,6417,
 if(m!=3&&(m%27==3||m%27==21)&&isprime((m^2+3*m+9)/27)==0&&issquarefree((m^2+3*m+9)/27)==1,
 K=bnfinit(x^3-m*x^2-(m+3)*x-1,1);
 if(bnfcertify(K)==1,,print([m,"UnderGRH"]));
 cn=K.no;
 if(cn<=1000,listput(~L,[m,(m^2+3*m+9)/27,factor((m^2+3*m+9)/27),K.clgp[1..2]])
 )
 )
);
print(length(L));
M=List();
for(i=1,1000,listput(~M,0);M[i]=select(x->x[4][1]==i,L);
if(M[i]!=List([]),print([i,M[i],length(M[i])]));
)
}
[9, List([[48, 91, [7, 1; 13, 1], [9, [3, 3]]], [75, 217, [7, 1; 31, 1], [9, [3, 3]]]]), 2]
[27, List([[183, 1261, [13, 1; 97, 1], [27, [3, 3, 3]]]]), 1]
[36, List([[111, 469, [7, 1; 67, 1], [36, [6, 6]]], [138, 721, [7, 1; 103, 1], [36, [6, 6]]], 
 [165, 1027, [13, 1; 79, 1], [36, [6, 6]]], [192, 1387, [19, 1; 73, 1], [36, [6, 6]]]]), 4]
[63, List([[264, 2611, [7, 1; 373, 1], [63, [21, 3]]]]), 1]
[81, List([[318, 3781, [19, 1; 199, 1], [81, [9, 3, 3]]]]), 1]
[108, List([[300, 3367, [7, 1; 13, 1; 37, 1], [108, [6, 6, 3]]]]), 1]
[117, List([[327, 3997, [7, 1; 571, 1], [117, [39, 3]]]]), 1]
[144, List([[399, 5941, [13, 1; 457, 1], [144, [12, 12]]], [462, 7957, [73, 1; 109, 1], [144, [12, 12]]]]), 2]
[171, List([[354, 4681, [31, 1; 151, 1], [171, [57, 3]]], [480, 8587, [31, 1; 277, 1], [171, [57, 3]]]]), 2]
[189, List([[489, 8911, [7, 1; 19, 1; 67, 1], [189, [21, 3, 3]]], 
 [588, 12871, [61, 1; 211, 1], [189, [21, 3, 3]]]]), 2]
[225, List([[507, 9577, [61, 1; 157, 1], [225, [15, 15]]]]), 1]
[252, List([[543, 10981, [79, 1; 139, 1], [252, [42, 6]]]]), 1]
[279, List([[426, 6769, [7, 1; 967, 1], [279, [93, 3]]]]), 1]
[324, List([[615, 14077, [7, 1; 2011, 1], [324, [18, 6, 3]]], 
 [705, 18487, [7, 1; 19, 1; 139, 1], [324, [6, 6, 3, 3]]]]), 2]
[333, List([[453, 7651, [7, 1; 1093, 1], [333, [111, 3]]], [624, 14491, [43, 1; 337, 1], [333, [111, 3]]]]), 2]
[468, List([[750, 20917, [13, 1; 1609, 1], [468, [78, 6]]], [894, 29701, [7, 1; 4243, 1], [468, [78, 6]]]]), 2]
[513, List([[516, 9919, [7, 1; 13, 1; 109, 1], [513, [57, 3, 3]]], 
 [1002, 37297, [13, 1; 19, 1; 151, 1], [513, [57, 3, 3]]]]), 2]
[549, List([[804, 24031, [7, 1; 3433, 1], [549, [183, 3]]]]), 1]
[567, List([[534, 10621, [13, 1; 19, 1; 43, 1], [567, [21, 3, 3, 3]]], [696, 18019, [37, 1; 487, 1], [567, [63, 3, 3]]], 
 [867, 27937, [7, 1; 13, 1; 307, 1], [567, [21, 3, 3, 3]]]]), 3]
[576, List([[759, 21421, [31, 1; 691, 1], [576, [12, 12, 2, 2]]], 
 [885, 29107, [13, 1; 2239, 1], [576, [12, 12, 2, 2]]]]), 2]
[657, List([[1209, 54271, [7, 1; 7753, 1], [657, [219, 3]]]]), 1]
[711, List([[651, 15769, [13, 1; 1213, 1], [711, [237, 3]]]]), 1]
[729, List([[1029, 39331, [37, 1; 1063, 1], [729, [27, 9, 3]]], 
 [1398, 72541, [7, 1; 43, 1; 241, 1], [729, [9, 9, 3, 3]]]]), 2]
[756, List([[1047, 40717, [19, 1; 2143, 1], [756, [42, 6, 3]]], [1137, 48007, [61, 1; 787, 1], [756, [42, 6, 3]]]]), 2]
[837, List([[1020, 38647, [7, 1; 5521, 1], [837, [93, 3, 3]]]]), 1]
[999, List([[912, 30907, [31, 1; 997, 1], [999, [111, 3, 3]]], [1182, 51877, [7, 1; 7411, 1], [999, [111, 3, 3]]], 
 [1218, 55081, [13, 1; 19, 1; 223, 1], [999, [111, 3, 3]]], [1515, 85177, [19, 1; 4483, 1], [999, [111, 3, 3]]]]), 4]
gp > length(L)
45
\end{verbatim}
}

% time = 20min, 40,328 ms.(Louboutin)
%time = 3h, 33min, 828 ms.(new)
%time = 2h, 26min, 20,688 ms.(old)

%Type B

{\footnotesize 
\begin{verbatim}
{
L=List();
for(m=-1,22165,
 if(m%27==12&&isprime((m^2+3*m+9)/27)==1,
 K=bnfinit(x^3-m*x^2-(m+3)*x-1,1);
 if(bnfcertify(K)==1,,print([m,"UnderGRH"]));
 cn=K.no;
 if(cn<=1000,listput(~L,[m,(m^2+3*m+9)/27,K.clgp[1..2]])
 )
 )
);
M=List();
for(i=1,1000,listput(~M,0);M[i]=select(x->x[3][1]==i,L);
if(M[i]!=List([]),print([i,M[i],length(M[i])]));
)
}
[1, List([[12, 7, [1, []]], [39, 61, [1, []]], [93, 331, [1, []]]]), 3]
[4, List([[120, 547, [4, [2, 2]]], [228, 1951, [4, [2, 2]]]]), 2]
[7, List([[255, 2437, [7, [7]]], [309, 3571, [7, [7]]]]), 2]
[13, List([[498, 9241, [13, [13]]]]), 1]
[28, List([[336, 4219, [28, [14, 2]]], [822, 25117, [28, [14, 2]]]]), 2]
[31, List([[795, 23497, [31, [31]]]]), 1]
[37, List([[471, 8269, [37, [37]]]]), 1]
[49, List([[444, 7351, [49, [49]]], [525, 10267, [49, [7, 7]]]]), 2]
[97, List([[1848, 126691, [97, [97]]]]), 1]
[109, List([[606, 13669, [109, [109]]]]), 1]
[133, List([[1227, 55897, [133, [133]]]]), 1]
[148, List([[1767, 115837, [148, [74, 2]]]]), 1]
[175, List([[2577, 246247, [175, [35, 5]]]]), 1]
[193, List([[1281, 60919, [193, [193]]]]), 1]
[196, List([[2685, 267307, [196, [98, 2]]]]), 1]
[211, List([[1551, 89269, [211, [211]]]]), 1]
[247, List([[2604, 251431, [247, [247]]]]), 1]
[307, List([[1659, 102121, [307, [307]]]]), 1]
[316, List([[2334, 202021, [316, [158, 2]]], [2982, 329677, [316, [158, 2]]]]), 2]
[331, List([[3063, 347821, [331, [331]]]]), 1]
[343, List([[2496, 231019, [343, [343]]]]), 1]
[532, List([[1578, 92401, [532, [266, 2]]], [5169, 990151, [532, [266, 2]]]]), 2]
[541, List([[2766, 283669, [541, [541]]]]), 1]
[553, List([[3171, 372769, [553, [553]]]]), 1]
[604, List([[1740, 112327, [604, [302, 2]]]]), 1]
[628, List([[2145, 170647, [628, [314, 2]]]]), 1]
[637, List([[5412, 1085407, [637, [637]]]]), 1]
[652, List([[1983, 145861, [652, [326, 2]]]]), 1]
[661, List([[5034, 939121, [661, [661]]]]), 1]
[688, List([[1416, 74419, [688, [172, 4]]], [4602, 784897, [688, [172, 4]]], 
 [4872, 879667, [688, [86, 2, 2, 2]]]]), 3]
[739, List([[4035, 603457, [739, [739]]]]), 1]
[769, List([[3495, 452797, [769, [769]]], [4683, 812761, [769, [769]]]]), 2]
[787, List([[4953, 909151, [787, [787]]]]), 1]
[889, List([[4062, 611557, [889, [889]]]]), 1]
gp > length(L)
45
\end{verbatim}
}

%time = 14min, 38,797 ms.(Louboutin)
%time = 1h, 9min, 22,501 ms.(new2)
%time = 2h, 30min, 26,609 ms.(new)
%time = 1h, 43min, 9,250 ms.(old)

{\footnotesize 
\begin{verbatim}
{
L=List();
for(m=-1,22165,
 if(m%27==12&&isprime((m^2+3*m+9)/27)==0&&issquarefree((m^2+3*m+9)/27)==1,
 K=bnfinit(x^3-m*x^2-(m+3)*x-1,1);
 if(bnfcertify(K)==1,,print([m,"UnderGRH"]));
 cn=K.no;
 if(cn<=1000,listput(~L,[m,(m^2+3*m+9)/27,factor((m^2+3*m+9)/27),K.clgp[1..2]])
 )
 )
);
M=List();
for(i=1,1000,listput(~M,0);M[i]=select(x->x[4][1]==i,L);
if(M[i]!=List([]),print([i,M[i],length(M[i])]));
)
}
[3, List([[147, 817, [19, 1; 43, 1], [3, [3]]], [174, 1141, [7, 1; 163, 1], [3, [3]]]]), 2]
[9, List([[390, 5677, [7, 1; 811, 1], [9, [3, 3]]], [417, 6487, [13, 1; 499, 1], [9, [3, 3]]]]), 2]
[21, List([[552, 11347, [7, 1; 1621, 1], [21, [21]]], [579, 12481, [7, 1; 1783, 1], [21, [21]]]]), 2]
[27, List([[363, 4921, [7, 1; 19, 1; 37, 1], [27, [3, 3, 3]]], 
 [633, 14911, [13, 1; 31, 1; 37, 1], [27, [3, 3, 3]]]]), 2]
[36, List([[282, 2977, [13, 1; 229, 1], [36, [6, 6]]], [714, 18961, [67, 1; 283, 1], [36, [6, 6]]]]), 2]
[39, List([[903, 30301, [157, 1; 193, 1], [39, [39]]], [984, 35971, [13, 1; 2767, 1], [39, [39]]]]), 2]
[48, List([[660, 16207, [19, 1; 853, 1], [48, [12, 4]]]]), 1]
[57, List([[957, 34027, [7, 1; 4861, 1], [57, [57]]]]), 1]
[63, List([[1200, 53467, [127, 1; 421, 1], [63, [21, 3]]], [1470, 80197, [13, 1; 31, 1; 199, 1], [63, [21, 3]]]]), 2]
[81, List([[1065, 42127, [103, 1; 409, 1], [81, [9, 9]]], [1308, 63511, [7, 1; 43, 1; 211, 1], [81, [9, 3, 3]]], 
 [1362, 68857, [37, 1; 1861, 1], [81, [9, 9]]]]), 3]
[84, List([[741, 20419, [7, 1; 2917, 1], [84, [42, 2]]]]), 1]
[93, List([[1038, 40021, [31, 1; 1291, 1], [93, [93]]]]), 1]
[108, List([[1389, 71611, [19, 1; 3769, 1], [108, [18, 6]]], [1443, 77281, [109, 1; 709, 1], [108, [18, 6]]]]), 2]
[129, List([[687, 17557, [97, 1; 181, 1], [129, [129]]]]), 1]
[144, List([[768, 21931, [7, 1; 13, 1; 241, 1], [144, [12, 12]]], [1092, 44287, [67, 1; 661, 1], [144, [12, 12]]], 
 [1713, 108871, [7, 1; 103, 1; 151, 1], [144, [12, 12]]], [1875, 130417, [7, 1; 31, 1; 601, 1], [144, [12, 12]]], 
 [2010, 149857, [277, 1; 541, 1], [144, [12, 12]]]]), 5]
[147, List([[1605, 95587, [61, 1; 1567, 1], [147, [21, 7]]], [1929, 138031, [97, 1; 1423, 1], [147, [147]]]]), 2]
[156, List([[1632, 98827, [37, 1; 2671, 1], [156, [78, 2]]]]), 1]
[171, List([[930, 32137, [7, 1; 4591, 1], [171, [57, 3]]], [2253, 188251, [7, 1; 26893, 1], [171, [57, 3]]]]), 2]
[183, List([[2199, 179341, [19, 1; 9439, 1], [183, [183]]]]), 1]
[192, List([[1146, 48769, [7, 1; 6967, 1], [192, [24, 8]]], [2118, 166381, [379, 1; 439, 1], [192, [24, 8]]], 
 [2415, 216277, [19, 1; 11383, 1], [192, [24, 8]]]]), 3]
[201, List([[1173, 51091, [19, 1; 2689, 1], [201, [201]]]]), 1]
[228, List([[849, 26791, [73, 1; 367, 1], [228, [114, 2]]], [1011, 37969, [43, 1; 883, 1], [228, [114, 2]]], 
 [1254, 58381, [79, 1; 739, 1], [228, [114, 2]]]]), 3]
[237, List([[1794, 119401, [139, 1; 859, 1], [237, [237]]], [2037, 153907, [13, 1; 11839, 1], [237, [237]]]]), 2]
[243, List([[1956, 141919, [139, 1; 1021, 1], [243, [27, 9]]]]), 1]
[252, List([[2658, 261961, [7, 1; 37423, 1], [252, [42, 6]]], 
 [3090, 353977, [13, 1; 73, 1; 373, 1], [252, [42, 6]]]]), 2]
[279, List([[2172, 174967, [13, 1; 43, 1; 313, 1], [279, [93, 3]]], 
 [2523, 236041, [13, 1; 67, 1; 271, 1], [279, [93, 3]]]]), 2]
[300, List([[2280, 192787, [7, 1; 27541, 1], [300, [30, 10]]]]), 1]
[324, List([[1335, 66157, [7, 1; 13, 1; 727, 1], [324, [18, 6, 3]]], 
 [1686, 105469, [7, 1; 13, 1; 19, 1; 61, 1], [324, [6, 6, 3, 3]]], 
 [3630, 488437, [37, 1; 43, 1; 307, 1], [324, [18, 6, 3]]], [3657, 495727, [337, 1; 1471, 1], [324, [18, 18]]]]), 4]
[351, List([[3387, 425257, [7, 1; 79, 1; 769, 1], [351, [39, 3, 3]]]]), 1]
[372, List([[3252, 392047, [61, 1; 6427, 1], [372, [186, 2]]], [3549, 466891, [31, 1; 15061, 1], [372, [186, 2]]], 
 [3738, 517921, [19, 1; 27259, 1], [372, [186, 2]]]]), 3]
[387, List([[3468, 445831, [397, 1; 1123, 1], [387, [129, 3]]]]), 1]
[444, List([[3144, 366451, [31, 1; 11821, 1], [444, [222, 2]]]]), 1]
[468, List([[2739, 278161, [13, 1; 21397, 1], [468, [78, 6]]], 
 [3792, 532987, [7, 1; 13, 1; 5857, 1], [468, [78, 6]]]]), 2]
[513, List([[2820, 294847, [7, 1; 73, 1; 577, 1], [513, [57, 3, 3]]], 
 [3225, 385567, [7, 1; 13, 1; 19, 1; 223, 1], [513, [57, 3, 3]]], 
 [3954, 579481, [7, 1; 19, 1; 4357, 1], [513, [57, 3, 3]]], 
 [4143, 636181, [7, 1; 13, 1; 6991, 1], [513, [57, 3, 3]]], [4467, 739537, [19, 1; 38923, 1], [513, [171, 3]]]]), 5]
[516, List([[2091, 162169, [7, 1; 23167, 1], [516, [258, 2]]], [4224, 661291, [163, 1; 4057, 1], [516, [258, 2]]]]), 2]
[549, List([[3333, 411811, [43, 1; 61, 1; 157, 1], [549, [183, 3]]]]), 1]
[576, List([[3009, 335671, [7, 1; 79, 1; 607, 1], [576, [24, 24]]], 
 [4764, 841111, [19, 1; 44269, 1], [576, [24, 24]]]]), 2]
[624, List([[3900, 563767, [199, 1; 2833, 1], [624, [78, 2, 2, 2]]]]), 1]
[651, List([[2388, 211471, [13, 1; 16267, 1], [651, [651]]], [2928, 317851, [19, 1; 16729, 1], [651, [651]]]]), 2]
[657, List([[1902, 134197, [7, 1; 19, 1; 1009, 1], [657, [219, 3]]], 
 [4359, 704221, [7, 1; 37, 1; 2719, 1], [657, [219, 3]]]]), 2]
[684, List([[4278, 678301, [13, 1; 52177, 1], [684, [114, 6]]]]), 1]
[687, List([[3819, 540601, [349, 1; 1549, 1], [687, [687]]]]), 1]
[732, List([[2307, 197377, [31, 1; 6367, 1], [732, [366, 2]]]]), 1]
[741, List([[2064, 158011, [7, 1; 22573, 1], [741, [741]]], [5844, 1265551, [7, 1; 180793, 1], [741, [741]]]]), 2]
[756, List([[4548, 766591, [7, 1; 97, 1; 1129, 1], [756, [42, 6, 3]]]]), 1]
[768, List([[3414, 432061, [7, 1; 61723, 1], [768, [48, 16]]]]), 1]
[777, List([[2901, 312019, [67, 1; 4657, 1], [777, [777]]]]), 1]
[804, List([[5520, 1129147, [79, 1; 14293, 1], [804, [402, 2]]]]), 1]
[819, List([[2874, 306241, [13, 1; 23557, 1], [819, [273, 3]]], [4845, 869947, [13, 1; 66919, 1], [819, [273, 3]]]]), 2]
[831, List([[4305, 686887, [193, 1; 3559, 1], [831, [831]]]]), 1]
[837, List([[4710, 822157, [7, 1; 67, 1; 1753, 1], [837, [93, 3, 3]]]]), 1]
[876, List([[4197, 652867, [181, 1; 3607, 1], [876, [438, 2]]]]), 1]
[900, List([[6465, 1548727, [229, 1; 6763, 1], [900, [30, 30]]]]), 1]
[912, List([[5115, 969577, [7, 1; 138511, 1], [912, [228, 4]]]]), 1]
[948, List([[3198, 379141, [7, 1; 54163, 1], [948, [474, 2]]]]), 1]
[972, List([[3576, 474019, [7, 1; 13, 1; 5209, 1], [972, [18, 18, 3]]]]), 1]
[999, List([[2469, 226051, [7, 1; 43, 1; 751, 1], [999, [111, 3, 3]]], 
 [2712, 272707, [19, 1; 31, 1; 463, 1], [999, [111, 3, 3]]]]), 2]
gp > length(L)
97
\end{verbatim}
}

%time = 43min, 29,719 ms.(Louboutin)
%time = 6h, 17min, 34,093 ms.(new)
%time = 4h, 47min, 6,172 ms.(old)

%%%%%%%%%%%%%%%%%%%%%%%%%%%%%%%%%%%%%%%%%%%%%%%%%%%%%%%%%%%%%%%%%%%%%%%%%%
\section{PARI/GP computations: The simplest cubic fields $L_m$ with class number $h_m=|\Cl(L_m)|<16$ for $-1\leq m\leq 10^7$}\label{S4}
{\footnotesize
\begin{verbatim}
gp > {
for(m=-1,10^7,
 K=bnfinit(x^3-m*x^2-(m+3)*x-1);
 cn=K.no;
 if(cn<16,print([[m,Mod(m,27)],m^2+3*m+9,factor(m^2+3*m+9),bnfcertify(K),K.clgp[1..2]]))
)
}
[[-1, Mod(26, 27)], 7, Mat([7, 1]), 1, [1, []]]
[[0, Mod(0, 27)], 9, Mat([3, 2]), 1, [1, []]]
[[1, Mod(1, 27)], 13, Mat([13, 1]), 1, [1, []]]
[[2, Mod(2, 27)], 19, Mat([19, 1]), 1, [1, []]]
[[3, Mod(3, 27)], 27, Mat([3, 3]), 1, [1, []]]
[[4, Mod(4, 27)], 37, Mat([37, 1]), 1, [1, []]]
[[5, Mod(5, 27)], 49, Mat([7, 2]), 1, [1, []]]
[[6, Mod(6, 27)], 63, [3, 2; 7, 1], 1, [3, [3]]]
[[7, Mod(7, 27)], 79, Mat([79, 1]), 1, [1, []]]
[[8, Mod(8, 27)], 97, Mat([97, 1]), 1, [1, []]]
[[9, Mod(9, 27)], 117, [3, 2; 13, 1], 1, [3, [3]]]
[[10, Mod(10, 27)], 139, Mat([139, 1]), 1, [1, []]]
[[11, Mod(11, 27)], 163, Mat([163, 1]), 1, [4, [2, 2]]]
[[12, Mod(12, 27)], 189, [3, 3; 7, 1], 1, [1, []]]
[[13, Mod(13, 27)], 217, [7, 1; 31, 1], 1, [3, [3]]]
[[14, Mod(14, 27)], 247, [13, 1; 19, 1], 1, [3, [3]]]
[[15, Mod(15, 27)], 279, [3, 2; 31, 1], 1, [3, [3]]]
[[16, Mod(16, 27)], 313, Mat([313, 1]), 1, [7, [7]]]
[[17, Mod(17, 27)], 349, Mat([349, 1]), 1, [4, [2, 2]]]
[[18, Mod(18, 27)], 387, [3, 2; 43, 1], 1, [3, [3]]]
[[19, Mod(19, 27)], 427, [7, 1; 61, 1], 1, [3, [3]]]
[[20, Mod(20, 27)], 469, [7, 1; 67, 1], 1, [3, [3]]]
[[21, Mod(21, 27)], 513, [3, 3; 19, 1], 1, [3, [3]]]
[[22, Mod(22, 27)], 559, [13, 1; 43, 1], 1, [3, [3]]]
[[23, Mod(23, 27)], 607, Mat([607, 1]), 1, [4, [2, 2]]]
[[24, Mod(24, 27)], 657, [3, 2; 73, 1], 1, [9, [3, 3]]]
[[25, Mod(25, 27)], 709, Mat([709, 1]), 1, [4, [2, 2]]]
[[26, Mod(26, 27)], 763, [7, 1; 109, 1], 1, [12, [6, 2]]]
[[27, Mod(0, 27)], 819, [3, 2; 7, 1; 13, 1], 1, [9, [3, 3]]]
[[28, Mod(1, 27)], 877, Mat([877, 1]), 1, [7, [7]]]
[[29, Mod(2, 27)], 937, Mat([937, 1]), 1, [4, [2, 2]]]
[[30, Mod(3, 27)], 999, [3, 3; 37, 1], 1, [3, [3]]]
[[31, Mod(4, 27)], 1063, Mat([1063, 1]), 1, [13, [13]]]
[[32, Mod(5, 27)], 1129, Mat([1129, 1]), 1, [7, [7]]]
[[33, Mod(6, 27)], 1197, [3, 2; 7, 1; 19, 1], 1, [9, [3, 3]]]
[[34, Mod(7, 27)], 1267, [7, 1; 181, 1], 1, [9, [3, 3]]]
[[35, Mod(8, 27)], 1339, [13, 1; 103, 1], 1, [9, [3, 3]]]
[[36, Mod(9, 27)], 1413, [3, 2; 157, 1], 1, [12, [6, 2]]]
[[38, Mod(11, 27)], 1567, Mat([1567, 1]), 1, [7, [7]]]
[[39, Mod(12, 27)], 1647, [3, 3; 61, 1], 1, [1, []]]
[[40, Mod(13, 27)], 1729, [7, 1; 13, 1; 19, 1], 1, [9, [3, 3]]]
[[41, Mod(14, 27)], 1813, [7, 2; 37, 1], 1, [3, [3]]]
[[42, Mod(15, 27)], 1899, [3, 2; 211, 1], 1, [12, [6, 2]]]
[[43, Mod(16, 27)], 1987, Mat([1987, 1]), 1, [7, [7]]]
[[44, Mod(17, 27)], 2077, [31, 1; 67, 1], 1, [12, [6, 2]]]
[[45, Mod(18, 27)], 2169, [3, 2; 241, 1], 1, [12, [6, 2]]]
[[47, Mod(20, 27)], 2359, [7, 1; 337, 1], 1, [9, [3, 3]]]
[[48, Mod(21, 27)], 2457, [3, 3; 7, 1; 13, 1], 1, [9, [3, 3]]]
[[49, Mod(22, 27)], 2557, Mat([2557, 1]), 1, [7, [7]]]
[[52, Mod(25, 27)], 2869, [19, 1; 151, 1], 1, [9, [3, 3]]]
[[53, Mod(26, 27)], 2977, [13, 1; 229, 1], 1, [9, [3, 3]]]
[[54, Mod(0, 27)], 3087, [3, 2; 7, 3], 1, [1, []]]
[[55, Mod(1, 27)], 3199, [7, 1; 457, 1], 1, [12, [6, 2]]]
[[57, Mod(3, 27)], 3429, [3, 3; 127, 1], 1, [12, [6, 2]]]
[[59, Mod(5, 27)], 3667, [19, 1; 193, 1], 1, [12, [6, 2]]]
[[66, Mod(12, 27)], 4563, [3, 3; 13, 2], 1, [1, []]]
[[67, Mod(13, 27)], 4699, [37, 1; 127, 1], 1, [12, [6, 2]]]
[[75, Mod(21, 27)], 5859, [3, 3; 7, 1; 31, 1], 1, [9, [3, 3]]]
[[84, Mod(3, 27)], 7317, [3, 3; 271, 1], 1, [9, [3, 3]]]
[[90, Mod(9, 27)], 8379, [3, 2; 7, 2; 19, 1], 1, [9, [3, 3]]]
[[93, Mod(12, 27)], 8937, [3, 3; 331, 1], 1, [1, []]]
[[100, Mod(19, 27)], 10309, [13, 2; 61, 1], 1, [3, [3]]]
[[103, Mod(22, 27)], 10927, [7, 2; 223, 1], 1, [9, [3, 3]]]
[[120, Mod(12, 27)], 14769, [3, 3; 547, 1], 1, [4, [2, 2]]]
[[139, Mod(4, 27)], 19747, [7, 2; 13, 1; 31, 1], 1, [9, [3, 3]]]
[[147, Mod(12, 27)], 22059, [3, 3; 19, 1; 43, 1], 1, [3, [3]]]
[[152, Mod(17, 27)], 23569, [7, 2; 13, 1; 37, 1], 1, [9, [3, 3]]]
[[154, Mod(19, 27)], 24187, [19, 2; 67, 1], 1, [3, [3]]]
[[174, Mod(12, 27)], 30807, [3, 3; 7, 1; 163, 1], 1, [3, [3]]]
[[188, Mod(26, 27)], 35917, [7, 2; 733, 1], 1, [12, [6, 2]]]
[[201, Mod(12, 27)], 41013, [3, 3; 7, 2; 31, 1], 1, [3, [3]]]
[[204, Mod(15, 27)], 42237, [3, 2; 13, 1; 19, 2], 1, [9, [3, 3]]]
[[228, Mod(12, 27)], 52677, [3, 3; 1951, 1], 1, [4, [2, 2]]]
[[235, Mod(19, 27)], 55939, [13, 2; 331, 1], 1, [12, [6, 2]]]
[[237, Mod(21, 27)], 56889, [3, 3; 7, 2; 43, 1], 1, [9, [3, 3]]]
[[255, Mod(12, 27)], 65799, [3, 3; 2437, 1], 1, [7, [7]]]
[[269, Mod(26, 27)], 73177, [13, 2; 433, 1], 1, [12, [6, 2]]]
[[271, Mod(1, 27)], 74263, [7, 1; 103, 2], 1, [3, [3]]]
[[286, Mod(16, 27)], 82663, [7, 3; 241, 1], 1, [1, []]]
[[309, Mod(12, 27)], 96417, [3, 3; 3571, 1], 1, [7, [7]]]
[[374, Mod(23, 27)], 141007, [37, 2; 103, 1], 1, [9, [3, 3]]]
[[390, Mod(12, 27)], 153279, [3, 3; 7, 1; 811, 1], 1, [9, [3, 3]]]
[[397, Mod(19, 27)], 158809, [7, 3; 463, 1], 1, [1, []]]
[[398, Mod(20, 27)], 159607, [7, 1; 151, 2], 1, [3, [3]]]
[[417, Mod(12, 27)], 175149, [3, 3; 13, 1; 499, 1], 1, [9, [3, 3]]]
[[498, Mod(12, 27)], 249507, [3, 3; 9241, 1], 1, [13, [13]]]
[[577, Mod(10, 27)], 334669, [43, 2; 181, 1], 1, [12, [6, 2]]]
[[629, Mod(8, 27)], 397537, [7, 3; 19, 1; 61, 1], 1, [3, [3]]]
[[716, Mod(14, 27)], 514813, [13, 1; 199, 2], 1, [12, [6, 2]]]
[[740, Mod(11, 27)], 549829, [7, 4; 229, 1], 1, [3, [3]]]
[[844, Mod(7, 27)], 714877, [37, 1; 139, 2], 1, [12, [6, 2]]]
[[876, Mod(12, 27)], 770013, [3, 3; 19, 2; 79, 1], 1, [3, [3]]]
[[911, Mod(20, 27)], 832663, [13, 3; 379, 1], 1, [1, []]]
[[939, Mod(21, 27)], 884547, [3, 3; 181, 2], 1, [3, [3]]]
[[972, Mod(0, 27)], 947709, [3, 2; 7, 3; 307, 1], 1, [9, [3, 3]]]
[[1083, Mod(3, 27)], 1176147, [3, 3; 7, 3; 127, 1], 1, [3, [3]]]
[[1119, Mod(12, 27)], 1255527, [3, 3; 7, 2; 13, 1; 73, 1], 1, [9, [3, 3]]]
[[1259, Mod(17, 27)], 1588867, [7, 1; 61, 3], 1, [1, []]]
[[1283, Mod(14, 27)], 1649947, [13, 3; 751, 1], 1, [1, []]]
[[1315, Mod(19, 27)], 1733179, [7, 3; 31, 1; 163, 1], 1, [9, [3, 3]]]
[[1426, Mod(22, 27)], 2037763, [7, 3; 13, 1; 457, 1], 1, [12, [6, 2]]]
[[1497, Mod(12, 27)], 2245509, [3, 3; 7, 1; 109, 2], 1, [3, [3]]]
[[1598, Mod(5, 27)], 2558407, [19, 3; 373, 1], 1, [1, []]]
[[1658, Mod(11, 27)], 2753947, [7, 4; 31, 1; 37, 1], 1, [9, [3, 3]]]
[[1769, Mod(14, 27)], 3134677, [7, 3; 13, 1; 19, 1; 37, 1], 1, [9, [3, 3]]]
[[2344, Mod(22, 27)], 5501377, [7, 3; 43, 1; 373, 1], 1, [12, [6, 2]]]
[[2361, Mod(12, 27)], 5581413, [3, 3; 37, 2; 151, 1], 1, [12, [6, 2]]]
[[2389, Mod(13, 27)], 5714497, [19, 1; 67, 3], 1, [1, []]]
[[3108, Mod(3, 27)], 9668997, [3, 3; 13, 3; 163, 1], 1, [3, [3]]]
[[3480, Mod(24, 27)], 12120849, [3, 2; 13, 3; 613, 1], 1, [9, [3, 3]]]
[[4059, Mod(9, 27)], 16487667, [3, 2; 7, 5; 109, 1], 1, [9, [3, 3]]]
[[4170, Mod(12, 27)], 17401419, [3, 3; 7, 3; 1879, 1], 1, [4, [2, 2]]]
[[5088, Mod(12, 27)], 25903017, [3, 3; 7, 3; 2797, 1], 1, [4, [2, 2]]]
[[5258, Mod(20, 27)], 27662347, [19, 3; 37, 1; 109, 1], 1, [3, [3]]]
[[5305, Mod(13, 27)], 28158949, [7, 1; 13, 3; 1831, 1], 1, [12, [6, 2]]]
[[5677, Mod(7, 27)], 32245369, [13, 4; 1129, 1], 1, [12, [6, 2]]]
[[6816, Mod(12, 27)], 46478313, [3, 3; 7, 2; 19, 1; 43, 2], 1, [9, [3, 3]]]
[[7502, Mod(23, 27)], 56302519, [7, 2; 13, 3; 523, 1], 1, [3, [3]]]
[[7837, Mod(7, 27)], 61442089, [37, 3; 1213, 1], 1, [1, []]]
[[8457, Mod(6, 27)], 71546229, [3, 2; 19, 4; 61, 1], 1, [9, [3, 3]]]
[[10927, Mod(19, 27)], 119432119, [19, 1; 31, 3; 211, 1], 1, [3, [3]]]
[[12117, Mod(21, 27)], 146858049, [3, 3; 13, 1; 19, 3; 61, 1], 1, [9, [3, 3]]]
[[12745, Mod(1, 27)], 162473269, [7, 6; 1381, 1], 1, [1, []]]
[[14349, Mod(12, 27)], 205936857, [3, 3; 7, 3; 37, 1; 601, 1], 1, [12, [6, 2]]]
[[24614, Mod(17, 27)], 605922847, [43, 3; 7621, 1], 1, [7, [7]]]
[[36435, Mod(12, 27)], 1327618539, [3, 3; 13, 3; 22381, 1], 1, [13, [13]]]
[[70509, Mod(12, 27)], 4971730617, [3, 3; 7, 1; 31, 3; 883, 1], 1, [9, [3, 3]]]
[[91858, Mod(4, 27)], 8438167747, [73, 3; 109, 1; 199, 1], 1, [9, [3, 3]]]
[[100952, Mod(26, 27)], 10191609169, [7, 1; 79, 3; 2953, 1], 1, [9, [3, 3]]]
[[101471, Mod(5, 27)], 10296668263, [7, 3; 5479, 2], 1, [4, [2, 2]]]
[[222550, Mod(16, 27)], 49529170159, [7, 6; 73, 2; 79, 1], 1, [3, [3]]]
[[263135, Mod(20, 27)], 69240817639, [7, 3; 43, 3; 2539, 1], 1, [1, []]]
[[266748, Mod(15, 27)], 71155295757, [3, 2; 7, 2; 13, 3; 271, 2], 1, [9, [3, 3]]]
[[506370, Mod(12, 27)], 256412096019, [3, 3; 193, 3; 1321, 1], 1, [1, []]]
[[612049, Mod(13, 27)], 374605814557, [13, 3; 19, 3; 24859, 1], 1, [7, [7]]]
[[1360624, Mod(13, 27)], 1851301751257, [7, 4; 43, 1; 61, 3; 79, 1], 1, [9, [3, 3]]]
[[1376233, Mod(16, 27)], 1894021398997, [7, 1; 13, 3; 43, 3; 1549, 1], 1, [3, [3]]]
[[6440111, Mod(17, 27)], 41475049012663, [7, 3; 229, 3; 10069, 1], 1, [4, [2, 2]]]
\end{verbatim}
}

\end{document}